\documentclass[11pt]{article}
\usepackage[top=1in, right=1in, left=1in, bottom=1in]{geometry}
\usepackage{graphicx} %
\usepackage{subfigure}
\usepackage{amsmath, amssymb, amsfonts, amsthm, mathtools}
\usepackage{empheq}
\usepackage[hidelinks]{hyperref}
\usepackage{xcolor}
\usepackage{algorithm}
\usepackage{algpseudocode}
\usepackage{enumitem}

\title{\textbf{Complexity of Minimizing Regularized Convex Quadratic Functions} \thanks{École Polytechnique Fédérale de Lausanne (EPFL), Machine Learning and Optimization Laboratory (MLO),
Switzerland. This work was supported by the Swiss State Secretariat for Education, Research and Innovation (SERI) under contract number 22.00133.}}

\author{Daniel Berg Thomsen \thanks{\texttt{\href{mailto:danielbergthomsen@gmail.com}{danielbergthomsen@gmail.com}}} 
\and 
Nikita Doikov 
\thanks{\texttt{\href{mailto:nikita.doikov@epfl.ch}{nikita.doikov@epfl.ch}}} }

\date{EPFL\\[\baselineskip]  \today}

\DeclareMathOperator{\spn}{span}

\newcommand{\Ar}{A_{\star}}
\newcommand{\Qmod}{Q_{\star}}
\newcommand{\xopt}{x_{\star}}
\newcommand{\kryl}{\mathcal{E}}

\DeclareMathOperator{\Diag}{diag}
\DeclareMathOperator*{\argmin}{arg\,min}

\newcommand{\R}{\mathbb{R}}
\newcommand{\Lr}{L_{\star}}
\newcommand{\mur}{\mu_{\star}}

\newcommand{\ub}{\mathcal{O}}
\newcommand{\lb}{\Omega}

\newtheorem{theorem}{Theorem}
\newtheorem*{theorem*}{Theorem}
\newtheorem*{remark*}{Remark}
\newtheorem{proposition}[theorem]{Proposition}%
\newtheorem{lemma}[theorem]{Lemma}
\newtheorem{corollary}[theorem]{Corollary}
\newtheorem{remark}[theorem]{Remark}%

\def\beq{\begin{equation}}
\def\eeq{\end{equation}}
\def\ba{\begin{array}}
\def\ea{\end{array}}
\def\la{\langle}
\def\ra{\rangle}

\begin{document}

\maketitle

\begin{abstract}
    In this work,
    we study the iteration complexity of gradient methods for minimizing convex quadratic functions regularized by powers of Euclidean norms. We show that, due to the uniform convexity of the objective, gradient methods have improved convergence rates. Thus, for the basic gradient descent with a novel step size, we prove a convergence rate of $\mathcal{O}(N^{-p/(p - 2)})$ for the functional residual, where $N$ is the iteration number and $p > 2$ is the power of the regularization term. We also show that this rate is tight by establishing a corresponding lower bound for one-step first-order methods. Then, for the general class of all multi-step methods, we establish that the rate of $\mathcal{O}(N^{-2p/(p-2)})$ is optimal, providing a sharp analysis of the minimization of uniformly convex regularized quadratic functions. This rate is achieved by the fast gradient method. A special case of our problem class is $p=3$, which is the minimization of cubically regularized convex quadratic functions. It naturally appears as a subproblem at each iteration of the cubic Newton method. Therefore, our theory shows that the rate of $\mathcal{O}(N^{-6})$ is optimal in this case. We also establish new lower bounds on minimizing the gradient norm within our framework.
\end{abstract}

\section{Introduction}

In this paper, we consider regularized quadratic optimization problems of the following form, for $p \geq 2$:
\begin{equation}\label{eq:prob}
    \min_{x \in \R^d}\biggl[ 
    \, 
    f(x) 
    \; := \;
    \frac{1}{2} x^\top A x - b^\top x + \frac{s}{p} \|x\|^p 
    \, \biggr],
\end{equation}
where $A = A^{\top} \succeq 0$ is 
a symmetric and positive semidefinite matrix, $b \in \R^d$, 
and $s > 0$ is a regularization parameter.
The norm $\| \cdot \|$ is the standard Euclidean.
We denote by $L \geq \mu \geq 0$ the largest and the smallest eigenvalues of the matrix: $L = \lambda_{\max}(A)$, $\mu = \lambda_{\min}(A)$,
which are important parameters of our problem class.

The most popular example of problem~\eqref{eq:prob} is when $p=2$.
In this case, the problem is strongly convex,
and a closed-form expression for the solution is given by
\begin{equation}\label{eq:solution_characterization}
    \xopt = \big(A + s I \big)^{-1} b.
\end{equation}
In general, for arbitrary $p \geq 2$,
the problem is \emph{uniformly convex} \cite{nesterov_accelerating_2008}
and always has the unique solution $\xopt$.
However, for $p > 2$, we do not have an explicit expression
for $\xopt$ using only a matrix inversion operation.
In this case, the solution to~\eqref{eq:prob}
can be found by solving the following univariate nonlinear equation:
\beq \label{StatCondition}
\ba{rcl}
A \xopt + s\|\xopt\|^{p - 2} \xopt & = & b.
\ea
\eeq
It can be solved efficiently
by employing matrix factorization techniques from computational linear algebra
\cite{conn_trust_2000,nesterov_cubic_2006}.

In a large-scale setting,
an alternative approach consists
of applying first-order gradient methods
to \eqref{eq:prob}. These methods
typically have a low per-iteration complexity,
with the main operation being a computation
of a matrix-vector product.
However, they might require 
a significant number of iterations
to converge. Therefore, it is crucial
to study worst-case bounds for the convergence rate
of these methods.

The first \textit{lower bounds} on the number of oracle calls 
required to solve an optimization problem,
and the notion of \textit{problem class}, 
were introduced in~\cite{nemirovski_problem_1983},
initiating the development of the complexity theory of optimization.
Such results are important because they give information about the fundamental difficulty of a given optimization problem, and, in the presence of an upper bound, 
show how far it is from the optimal one.
Complexity-theoretic results allow us to characterize how much computational resources are required for minimizing functions from a particular
problem class.

For problem~\eqref{eq:prob},
the best-known convergence rate 
in terms of the functional residual
was 
established~\cite{roulet2017sharpness,nesterov_inexact_2022}
for the composite version of the Fast Gradient Method~\cite{nesterov_method_1983},
which is, for $p > 2$ and $s > 0$:
\beq \label{FGM_rate}
\ba{c}
\mathcal{O}\bigl( 1 / N^{\frac{2p}{p - 2}}\bigr),
\ea
\eeq
where $N$ is the number of iterations
of the method.
Thus, we see that first-order methods
can exploit the uniform convexity of the problem
and achieve faster rates of convergence
than $\mathcal{O}(1 / N^2)$, which is for minimizing a convex smooth objective ($s = 0$).
It is widely known that the Fast Gradient Method
is \textit{optimal} for the standard classes
of convex and strongly convex functions
with Lipschitz continuous gradient~\cite{nesterov_lectures_2018},
but it has remained an open question whether
convergence rate~\eqref{FGM_rate} is the best possible
for first-order methods
minimizing regularized quadratic functions of the form~\eqref{eq:prob}.

A line of work has focused on justifying 
complexity lower bounds for the function classes with non-standard
smoothness or convexity assumptions \cite{nemirovskii1985optimal,guzman_lower_2015, agarwal_lower_2018, arjevani_oracle_2019, diakonikolas2024complementary}.  Lower bounds
matching \eqref{FGM_rate} were recently proven for the classes of uniformly convex functions 
with H\"older continuous gradient in \cite{doikov_lower_2022}. 
However, the construction of this lower bound
employs a smoothing technique,
while the target resisting function is not a regularized quadratic function
of the form~\eqref{eq:prob}. 
It has therefore been unknown
whether rate~\eqref{FGM_rate} is tight
for quadratic objectives.

\paragraph{Contributions}
The contributions of this paper are summarized as follows:
\begin{enumerate}[label=\textbf{(\roman*)}]
\item First, we prove a convergence rate of $\ub(N^{-p/(p-2)})$ for the simple version of the basic gradient descent solving~\eqref{eq:prob} using a novel step size rule that we introduce in Section~\ref{sec:gradient_method}. 
Previously, the known approach for achieving this convergence rate involved using the \textit{composite gradient method}~\cite{nesterov_implementable_2021}, which produces an iteration sequence that is different from ours. Therefore, our approach complements previous methods by demonstrating the efficiency of the basic gradient steps with a properly chosen step sizes.
We also present an adaptive version of our method,
which automatically adjusts to a local smoothness and does not require any unknown parameters of the objective, such as the Lipschitz constant~$L$ or the size of the solution~$\|\xopt\|$. Notably, our adaptive scheme achieves the same convergence rate of~$\ub(N^{-p/(p-2)})$, where $N$ is the number of iterations (matrix-vector products).
Thus, compared to the standard rate $\ub(N^{-1})$ 
for convex functions, we observe that the rate of the basic gradient method accelerates due to the uniform convexity of the regularizer in~\eqref{eq:prob}.

\item 
We establish that the rate of~$\ub(N^{-p/(p-2)})$ for the basic gradient method
is \textit{optimal}, by proving the matching lower bound 
for one-step gradient methods on our problem class in Section~\ref{sec:one_step_lower_bounds}.
Our framework covers a large variety of one-step gradient methods,
including both the classical and the composite gradient methods
as applied to objective~\eqref{eq:prob}.

\item 
While the rate~$\ub(N^{-p/(p-2)})$
is optimal for one-step gradient methods,
it is known that it can be \textit{accelerated}
achieving~\eqref{FGM_rate} by taking into account the history of past iterations~\cite{nesterov_implementable_2021}.
We address the open question of establishing a matching lower bound
of $\lb(1 / N^{\frac{2p}{p - 2}})$, for the general class of \textit{all} first-order methods 
by proving the following theorem:

\begin{theorem*}
    Let $p > 2$, $s > 0$, and $L > 0$. For any first-order method 
    running for $N \leq \frac{d - 1}{2}$ iterations,
    there is a function $f$ of the form in \eqref{eq:prob}
    with $\lambda_{\max}(A) = L$ such that
    \begin{equation}
    \ba{rcl}
        f(x_N) - f_\star & \geq & \lb \Big( \frac{L}{s^{2/p} N^2}\Big)^{\frac{p}{p-2}},
    \ea
    \end{equation}
    where $\lb(\cdot)$ hides an absolute numerical constant.
\end{theorem*}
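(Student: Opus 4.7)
The plan is to adapt Nesterov's classical tridiagonal worst-case construction for quadratic minimization to the uniformly convex regularized setting, exploiting the fact that the nonlinear term $s\|x\|^{p-2} x$ does not enlarge the Krylov subspace generated by $A$ and $b$. Concretely, I would take $A = \alpha T$ where $T \in \mathbb{R}^{d \times d}$ is the standard tridiagonal matrix $\mathrm{tridiag}(-1, 2, -1)$ and $\alpha$ is chosen so that $\lambda_{\max}(A) = L$ exactly, and set $b = \xi e_1$ with a scalar $\xi > 0$ to be calibrated below. Since $s\|x\|^{p-2} x \in \mathrm{span}(x)$ and the tridiagonal $A$ extends the support by at most one coordinate per application, a direct induction yields $x_k \in \mathrm{span}(e_1, \ldots, e_k)$ for any first-order method initialized at $x_0 = 0$. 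Consequently $\|x_N - x_\star\|^2 \geq \sum_{j > N} (x_\star)_j^2$, and the problem reduces to showing that the exact minimizer $x_\star$ places a constant fraction of its Euclidean mass beyond the first $N$ coordinates.

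To analyze $x_\star$, I would use the self-consistent linear characterization $(A + \sigma_\star I) x_\star = b$ with $\sigma_\star := s \|x_\star\|^{p-2}$, which is equivalent to the stationary condition \eqref{StatCondition}. For the tridiagonal matrix with $b \propto e_1$, the coordinates of $(A + \sigma_\star I)^{-1} e_1$ decay geometrically with ratio $q_\star$ given by the stable root of $q^2 - (2 + \sigma_\star/\alpha) q + 1 = 0$, so $q_\star = 1 - 2\sqrt{\sigma_\star/L} + O(\sigma_\star/L)$ for small $\sigma_\star/L$. Standard geometric-series bookkeeping then gives $\sum_{j > N} (x_\star)_j^2 / \|x_\star\|^2 = q_\star^{2N}(1 + o(1))$. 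I would then calibrate $\xi$ so that the fixed-point equation forces $\sqrt{\sigma_\star/L} = \Theta(1/N)$, equivalently $\|x_\star\| = \Theta((L/(sN^2))^{1/(p-2)})$; with this calibration $q_\star^{2N}$ is bounded below by an absolute constant, and therefore $\|x_N - x_\star\|^2 \gtrsim \|x_\star\|^2$.

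Finally, I would convert this distance bound into a functional gap via the order-$p$ uniform convexity inherited from the regularizer $(s/p)\|\cdot\|^p$ together with the convexity of the quadratic part: $f(x_N) - f_\star \geq (s/(p\,2^{p-2})) \|x_N - x_\star\|^p \gtrsim s \|x_\star\|^p$. Substituting the calibrated value of $\|x_\star\|$ yields $f(x_N) - f_\star \gtrsim s \cdot (L/(sN^2))^{p/(p-2)} = L^{p/(p-2)} / (s^{2/(p-2)} N^{2p/(p-2)}) = (L/(s^{2/p} N^2))^{p/(p-2)}$, which matches the theorem. The dimension hypothesis $N \leq (d-1)/2$ is precisely what guarantees that the geometrically decaying tail of $x_\star$ has at least $N + 1$ coordinates to populate before the truncation at dimension $d$ distorts the decay estimates used above.

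The hardest part will be rigorously handling the self-consistency coupling $\sigma_\star = s\|(A + \sigma_\star I)^{-1} b\|^{p-2}$: existence and uniqueness of the fixed point follow from monotonicity of $\sigma \mapsto s\|(A + \sigma I)^{-1} b\|^{p-2}$ on $(0, \infty)$, but calibrating $\xi$ so that $\sigma_\star$ lands at exactly $\Theta(L/N^2)$ with constants independent of $N, L, s$ requires careful bookkeeping inside the resolvent estimates. A secondary but essential technical point is controlling the boundary perturbation of the truncated tridiagonal: the clean infinite-dimensional geometric decay is slightly distorted near the trailing end of the interval, and the hypothesis $N \leq (d-1)/2$ must be invoked to absorb this perturbation into the final tail constant without degrading the leading-order scaling.
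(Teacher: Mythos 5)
There is a genuine gap at the very first step: you claim that with a \emph{fixed} tridiagonal instance $A=\alpha T$, $b=\xi e_1$, ``a direct induction yields $x_k \in \spn(e_1,\ldots,e_k)$ for any first-order method.'' That induction only works for methods whose iterates are constrained to lie in the linear span of previously observed gradients (gradient descent, FGM, conjugate gradients, the composite method, etc.). The theorem, as stated and as the paper proves it, is for \emph{arbitrary} first-order methods $\mathcal{M}=(\mathcal{M}_0,\mathcal{M}_1,\ldots)$, where $\mathcal{M}_k$ is an arbitrary mapping of past oracle outputs; such a method may simply output $x_1=e_d$ and your fixed instance gives no lower bound. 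The paper closes exactly this hole with a resisting-oracle construction (Lemma~\ref{lem:kryltraj}): the eigenvalue profile $\Lambda$ is fixed, but the orthogonal $b$-invariant matrix $U$ in $A=U\Lambda U^\top$ is built adversarially by Householder reflections, step by step, so that the trajectory is forced into the Krylov subspaces $\kryl_{2k}(f)$ while all constructed functions remain informationally indistinguishable to the oracle up to iteration $k$. This is also the true origin of the hypothesis $N\le\frac{d-1}{2}$ (iterates are only confined to the $2k$-th, not the $k$-th, Krylov subspace), not the boundary-truncation effect you invoke. Without either adding the span assumption (which weakens the theorem) or importing such a rotation argument, your proof does not establish the stated result.

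Two further remarks on the remaining steps, which are otherwise sound in spirit. First, the self-consistency calibration you flag as the hardest part can be sidestepped the way the paper does: parametrize by the solution norm $r=\|\xopt\|$ directly and choose $b$ so that $\xopt$ is exact (Lemma~\ref{LemmaXOpt} uses $b=r(\Lambda+sr^{p-2}I)\sqrt{\pi}$; in your setting, monotonicity and continuity of $\xi\mapsto\|\xopt(\xi)\|$ give a $\xi$ with $\|\xopt\|=r$ exactly), after which $\sigma_\star=sr^{p-2}$ is known in closed form and the choice $r=\Theta((L/(sN^2))^{1/(p-2)})$, the same scaling as the paper's $r=(L/(3sN^2))^{1/(p-2)}$, is immediate. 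Second, where you estimate the tail mass of $\xopt$ by the geometric decay of the resolvent of the tridiagonal matrix, the paper instead reduces $\|x_N-\xopt\|$ to a Chebyshev uniform-approximation problem over the shifted spectrum $[\mu+sr^{p-2},\,L+sr^{p-2}]$ (Proposition~\ref{prop:poly} and Lemma~\ref{lem:min_max_sol}), yielding the clean bound $r\exp(-8N/(\sqrt{Q_\star}-1))$; both routes give the same $\exp(-\Theta(N\sqrt{sr^{p-2}/L}))$ behavior, but the polynomial formulation is what lets the adversarial-rotation argument and the final optimization over $r$ go through for general methods. The final conversion to the functional residual via uniform convexity and the resulting exponent arithmetic in your proposal are correct.
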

The full theorem statement with proof can be found in Section~\ref{sec:implications}. 
More general results that apply broadly to our class of problems are proven in Section~\ref{sec:lower_bounds}. 
Our result is continuous in $2 \leq p < +\infty$
and thus it also recovers the known complexity bounds for the class of smooth strongly convex functions ($p = 2$).
\item We demonstrate experiments confirming our theory in Section~\ref{sec:experiments}, and show that 
we can relax the theoretical construction of the lower bound, while still observing the worst-case performance
of the first-order methods.
\end{enumerate}

\paragraph{Related work}
Oracle lower bounds for the class
of smooth convex functions frequently employ quadratic functions
in their constructions. 
A classic example is a lower bound 
of $\lb\big(N^{-2}\big)$ for gradient methods
minimizing smooth convex functions
that is built by using a particular quadratic function with a tridiagonal matrix, and assuming that the iterates
of the method remain within the span of the observed gradients~\cite{nesterov_lectures_2018}.
A similar construction can also be used to prove the corresponding lower bound for smooth strongly convex functions.
These rates are achieved by the Fast Gradient Method \cite{nesterov_lectures_2018, nesterov_method_1983}. 

It is possible to avoid the assumption
that the method produces iterates from the span of previous queries, using the idea of a \textit{resisting oracle}
and constructing the worst-case quadratic function
adversarially~\cite{arkadi_nemirovski_information-based_1994}. 
Therefore, the same lower complexity bounds
hold for even large families of first-order algorithms,
including the composite gradient methods~\cite{nesterov_lectures_2018} and quasi-Newton methods~\cite{nocedal2006conjugate}.
The framework developed in the lecture notes~\cite{arkadi_nemirovski_information-based_1994} has been adapted to analyze regularized quadratic functions of the form given in~\eqref{eq:prob} in an unpublished master's thesis~\cite{master_thesis}, and this paper contains a shortened version of the proof.

An important example of a problem of form (\ref{eq:prob}) is the subproblem of the cubically regularized Newton method~\cite{griewank_modification_1981,nesterov_cubic_2006,cartis_adaptive_2011-1}. 
For minimizing a twice-differentiable function $g$, the next iteration of the cubic Newton is chosen according to the rule
\begin{equation} \label{CubicNewton}
\ba{rcl}
    x_{k+1} & \coloneq & 
    x_k + \argmin\limits_{z \in \R^d} \Big\{ \,
    \frac{1}{2} z^\top \nabla^2 g(x_k) z + \nabla g(x_k)^\top z + \frac{s}{3} \|z\|^3 
    \, \Big\},
\ea
\end{equation}
where $s$ is the Lipschitz constant of $\nabla^2g(x_k)$. 
Thus, when $\nabla^2 g(x_k) \succeq 0$, the subproblem in~\eqref{CubicNewton} exactly corresponds
to our class of functions~\eqref{eq:prob} for $p = 3$.

Another interesting case is $2 \leq p \leq 3$, which is related to subproblems
of regularized Newton methods that have been studied 
in \cite{grapiglia_regularized_2017, grapiglia_accelerated_2019}, and
the case $p = 4$, which is important
in the context of high-order tensor methods~\cite{nesterov_implementable_2021}.

In the previous work~\cite{carmon_analysis_2018}, the authors also considered cubically regularized quadratic functions, allowing $\mu := \lambda_{\min}(A) < 0$, \emph{i.e.,} non-convex problems. 
Note that due to the regularization term, for $p > 2$, the non-convex problems with $\mu < 0$ are still tractable
and attain the global minima. Therefore we can consider the complexity of finding it by the first-order methods. Lower bounds corresponding to both linear and sublinear rates were derived in \cite{carmon_analysis_2018},
by finding the correspondence between the global minimum of the original problem and to the minimum of an associated trust-region problem. 
However, in our work, we assume that our function is always convex ($\mu \geq 0$),
and therefore we are able to derive tight rates, which are significantly better than
the corresponding ones for the general non-convex case.

A closely related to the regularized Newton subproblem is the trust-region subproblem \cite{conn_trust_2000, erway_subspace_2010, gould_solving_1999, gould_solving_2010, cartis_adaptive_2011-1, hazan_linear-time_2016}. It corresponds to a constrained quadratic minimization over a given Euclidean ball.
Correspondingly, the optimal rate achieved by first-order methods
solving the convex trust-region subproblem is $\mathcal{O}(N^{-2})$,
which can be seen as a limiting case of our theory when $p \to +\infty$.

Another instance in which we see problems of the form described in (\ref{eq:prob}) is the case of training the $\ell_2$-regularized linear regression problem
\begin{equation*}
    \min_{w \in \R^d} \|Xw - y\|^2 + \lambda \|w\|^p,
\end{equation*}
where $X \in \R^{n \times d}$ is a given data matrix and $y \in \R^n$ is a given set of observations. This includes higher degrees of regularization than \emph{ridge regression} \cite{hastie_linear_2009}. The state-of-the-art methods are sketching algorithms \cite{chowdhury_iterative_2018, kacham_sketching_2022}, and there are probabilistic computational lower bounds for this problem \cite{kacham_sketching_2022}.

Related to our work, a recent trend in optimization focuses on different smoothness conditions, including the notions of \textit{relatively smooth}~\cite{van2017forward,bauschke2017descent,lu2018relatively} and \textit{$(L_0, L_1)$-smooth} functions (see~\cite{zhang2019gradient,koloskova2023revisiting,gorbunov2024methods,vankov2024optimizing} and references therein),
which aim extend beyond the standard notion of first-order smoothness. In contrast, our work studies a particular class of convex regularized quadratic functions, given in explicit form, and establishes tight lower and upper complexity bounds specific to our formulation. 

\paragraph{Notation}
We denote by $\|v\|$ the standard Euclidean norm of the vector $v$. For a matrix $B$,
we use the notation $\|B\|$ for the spectral norm.
We say that a matrix $U$ is $b$-invariant, where $b$ is a given vector, if  $Ub = b$.
We denote the standard simplex by 
$\Delta_d := \{ x \in \R^d_{\geq 0} \, : \, e^{\top} x = 1 \}$,
where $e$ is the vector of all ones. We denote the standard orthonormal
basis by $e_1, \ldots, e_d$, and by $I$ the identity
matrix. $\mathcal{P}_n$ refers to the space of polynomials
with degree~$\leq n$.

\section{Convergence of the Basic Gradient Method}
\label{sec:gradient_method}

In this section, we prove a convergence guarantee for the basic gradient method on problem~\eqref{eq:prob}.
For each iteration $k \geq 0$,
we set:
\beq \label{eq:grad_method}
\boxed{\ba{rcl}
x_{k+1} & \coloneqq & x_k - \eta_k \nabla f(x_k),
\ea}
\eeq
where $\eta_k > 0$ is a certain step size.
Without loss of generality we can assume that $x_0 := 0 \in \R^d$.
Note that our objective function $f$ has no globally Lipschitz-continuous gradient; therefore, we cannot immediately apply
the standard convergence theory of the gradient method~\cite{nesterov_lectures_2018}.
However, as we will show, we can still prove fast global convergence, by taking the
properties of our regularizer into account.

\subsection{Main Inequalities}

In our analysis, we use the standard notion of \textit{uniform convexity}~\cite{nesterov_accelerating_2008}.
Thus, we say that a differentiable function $f$ is uniformly convex of degree $p \geq 2$ if there exists a constant $\sigma_p > 0$, such that
\begin{equation}\label{eq:uniform_ineq}
\ba{rcl}
    f(y) & \geq & 
    f(x) + \nabla f(x)^\top (y-x) + \frac{\sigma_p}{p} \|y-x\|^p,
    \qquad x, y \in \R^d.
\ea
\end{equation}
Note that the power of the Euclidean norm, 
$g(x) \coloneq \frac{s}{p} \|x\|^p$, $p \geq 2$,
is uniformly convex with constant 
(see, e.g., Lemma~2.5 in \cite{doikov_minimizing_2021}):
\beq \label{eq:sigma_value}
\ba{rcl}
\sigma_p & = & s2^{2-p}.
\ea
\eeq
As a consequence, 
our regularized quadratic objective~\eqref{eq:prob}
is uniformly convex with the same parameter
\eqref{eq:sigma_value}.
Substituting $x := \xopt$ into
\eqref{eq:uniform_ineq},
we obtain the following useful bound
on the functional residual,
\beq \label{eq:func_residual}
\ba{rcl}
f(y) - f_{\star} & \geq &
\frac{\sigma_p}{p}\|y - \xopt\|^p, \qquad y \in \R^d.
\ea
\eeq
A further useful consequence follows from minimizing both sides of~\eqref{eq:uniform_ineq} with respect to $y$ (see (5) of \cite{doikov_minimizing_2021}):
\beq \label{eq:grad_ineq}
\ba{rcl}
    f(x) - f_\star 
    &\leq& 
    \frac{p-1}{p} \Big(\frac{1}{\sigma_p}\Big)^{\frac{1}{p-1}}\|\nabla f(x)\|^{\frac{p}{p-1}},
    \qquad x \in \R^d.
\ea
\eeq

Let us start with the following simple condition
which ensures that all iterates of the gradient method
will remain bounded, when the step size is small.

\begin{lemma} \label{LemmaStepSize}
For $k \geq 0$, let it hold that
\beq \label{StepsizeSufficient}
\ba{rcl}
\eta_k & \leq & 
\frac{2\la \nabla f(x_k), x_k - x_{\star}\ra}{\|\nabla f(x_k) \|^2}.
\ea
\eeq
Then
\beq \label{DistancesBound}
\ba{rcl}
\| x_{k + 1} - x_{\star} \| & \leq & \| x_k - x_{\star} \|.
\ea
\eeq
\end{lemma}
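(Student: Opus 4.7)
The plan is to proceed by a direct expansion of $\|x_{k+1} - \xopt\|^2$ using the gradient update rule, and then read off the desired inequality by comparing the resulting quadratic in $\eta_k$ against zero. Specifically, I would start from the definition in~\eqref{eq:grad_method} and write
\begin{equation*}
\|x_{k+1} - \xopt\|^2 \; = \; \|x_k - \xopt\|^2 - 2\eta_k \langle \nabla f(x_k), x_k - \xopt\rangle + \eta_k^2 \|\nabla f(x_k)\|^2.
\end{equation*}
The goal \eqref{DistancesBound} is equivalent to showing that the sum of the last two terms is non-positive.

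Factoring $\eta_k$ out (which we may assume to be nonnegative), this reduces to requiring
\begin{equation*}
\eta_k \|\nabla f(x_k)\|^2 \; \leq \; 2 \langle \nabla f(x_k), x_k - \xopt\rangle,
\end{equation*}
which is exactly the step-size condition~\eqref{StepsizeSufficient} in the hypothesis. Dividing by $\|\nabla f(x_k)\|^2$ (assuming $\nabla f(x_k) \neq 0$; if it is zero, the update is trivial and the conclusion holds immediately with equality) yields the claim.

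One detail worth commenting on is that the right-hand side of~\eqref{StepsizeSufficient} is well-defined as a positive quantity whenever $x_k \neq \xopt$. Indeed, by convexity of $f$, we have $\langle \nabla f(x_k), x_k - \xopt\rangle \geq f(x_k) - f_\star \geq 0$, and uniform convexity \eqref{eq:func_residual} plus $\nabla f(\xopt) = 0$ forces this inner product to be strictly positive whenever $x_k \neq \xopt$. Thus the step-size bound is meaningful, and the argument above is justified.

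There is really no main obstacle here; the lemma is essentially a one-line computation, with the only subtlety being to verify that the expression on the right of~\eqref{StepsizeSufficient} makes sense and is nonnegative, so that $\eta_k \geq 0$ satisfying the bound actually exists. The lemma is a warm-up: it confirms that under a mild upper bound on the step size, the iterates cannot drift away from $\xopt$, which will presumably be used later to control quantities such as $\|x_k\|$ along the trajectory and to bootstrap smoothness-like estimates that the non-globally-Lipschitz objective otherwise lacks.
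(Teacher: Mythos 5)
Your proof is correct and follows essentially the same route as the paper: expand $\|x_{k+1}-\xopt\|^2$ via the gradient step and observe that the step-size condition~\eqref{StepsizeSufficient} makes the $\eta_k$-dependent terms non-positive. The additional remarks on positivity of the right-hand side of~\eqref{StepsizeSufficient} are fine but not needed for the statement itself.
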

\begin{proof}
Indeed, 
$$
\ba{rcl}
\frac{1}{2}\|x_{k + 1} - x_{\star}\|^2
 & = & \frac{1}{2} \|x_k - x_{\star}\|^2
 - \eta_k \Bigl[  
 \la \nabla f(x_k), x_{k} - x_{\star} \ra
 - \frac{\eta_k}{2}\| \nabla f(x_k) \|^2
 \Bigr]
 \;\; \overset{\eqref{StepsizeSufficient}}{\leq} \;\;
 \frac{1}{2} \|x_k - x_{\star}\|^2.
\ea
$$
\end{proof}
Therefore, with a sufficiently small step size $\eta_k$ satisfying~\eqref{StepsizeSufficient}
at all iterations, we ensure that all points belong to the following Euclidean ball:
\beq \label{BallBound}
\ba{rcl}
x_k & \in & B_{\star}
\;\; := \;\; \{ y \; : \; \|y - x_{\star}\| \leq \|x_{\star} \|  \}, \qquad \forall k \geq 0.
\ea
\eeq
At the same time, we have, for any $y \in \R^d$:
\beq \label{HessBound}
\ba{rcl}
\nabla^2 f(y) & = &
A + s \|y\|^{p - 2} I + s(p - 2) \|y\|^{p - 4} yy^{\top}
\;\; \preceq \;\;
(L + s(p - 1)\|y\|^{p - 2}) I,
\ea
\eeq
where $L \geq 0$ is the largest eigenvalue of matrix $A$.
Consequently, for all points from $B_{\star}$, we can bound the Hessian of our objective using a single Lipschitz constant and thus obtain a bound on the progress of one step of the method.
We denote this Lipschitz constant by
\beq \label{MDef}
\boxed{
\ba{rcl}
M_{\star} & := & L + s(p - 1) 2^{p - 2} \|x_{\star}\|^{p - 2}.
\ea
}
\eeq

\begin{lemma} Let $x_k \in B_{\star}$. Let the step size $\eta_k$ satisfy~\eqref{StepsizeSufficient}
and $\eta_k \leq \frac{1}{M_{\star}}$.
Then,
\beq \label{OneStepProgress}
\ba{rcl}
f(x_k) - f(x_{k + 1}) & \geq & \frac{\eta_k}{2} \| \nabla f(x_k) \|^2.
\ea
\eeq
\end{lemma}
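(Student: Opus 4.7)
The plan is to combine the standard descent inequality (quadratic upper model based on a local Lipschitz constant for the gradient) with the key observation that the entire segment from $x_k$ to $x_{k+1}$ lies inside the ball $B_{\star}$, on which \eqref{HessBound} gives a uniform bound $\nabla^2 f(y) \preceq M_{\star} I$.

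First I would verify that every point on the segment $[x_k, x_{k+1}]$ has norm at most $2\|x_{\star}\|$. Since $\eta_k$ satisfies \eqref{StepsizeSufficient}, Lemma~\ref{LemmaStepSize} applies and yields $x_{k+1} \in B_{\star}$; together with the hypothesis $x_k \in B_{\star}$ and convexity of $B_{\star}$, the whole segment sits in $B_{\star}$. By the triangle inequality, any $y$ in this segment satisfies $\|y\| \leq \|y - x_{\star}\| + \|x_{\star}\| \leq 2\|x_{\star}\|$. Plugging this into \eqref{HessBound} gives $\nabla^2 f(y) \preceq \bigl(L + s(p-1)2^{p-2}\|x_{\star}\|^{p-2}\bigr) I = M_{\star} I$ on the whole segment.

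Next I would apply the standard quadratic upper bound derived from Taylor's theorem with integral remainder along this segment:
\begin{equation*}
f(x_{k+1}) \;\leq\; f(x_k) + \nabla f(x_k)^\top (x_{k+1} - x_k) + \frac{M_{\star}}{2}\|x_{k+1} - x_k\|^2.
\end{equation*}
Substituting the gradient step $x_{k+1} - x_k = -\eta_k \nabla f(x_k)$, the right-hand side becomes $f(x_k) - \eta_k\bigl(1 - \tfrac{M_{\star}\eta_k}{2}\bigr)\|\nabla f(x_k)\|^2$. Using the assumption $\eta_k \leq 1/M_{\star}$, we have $1 - M_{\star}\eta_k/2 \geq 1/2$, and rearranging yields the claimed bound \eqref{OneStepProgress}.

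The main obstacle is conceptual rather than technical: one has to notice that the seemingly global lack of Lipschitz smoothness is harmless because the iterates, by virtue of the step size condition~\eqref{StepsizeSufficient}, stay inside $B_{\star}$, where the Hessian is bounded by $M_{\star}$. Once this is articulated, the proof reduces to the textbook descent lemma applied with the local Lipschitz constant $M_{\star}$.
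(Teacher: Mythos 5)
Your proposal is correct and follows essentially the same route as the paper: confine the segment $[x_k, x_{k+1}]$ to $B_{\star}$ via Lemma~\ref{LemmaStepSize} and convexity of the ball, bound the Hessian there by $M_{\star} I$ using \eqref{HessBound} with $\|y\| \leq 2\|x_{\star}\|$, and apply the integral-remainder descent inequality together with $\eta_k \leq 1/M_{\star}$. Your write-up merely makes explicit some steps (convexity of $B_{\star}$, the triangle inequality) that the paper leaves implicit.
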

\begin{proof}
Since both $x_k \in B_{\star}$ and $x_{k + 1} \in B_{\star}$, we have
$$
\ba{cl}
&f(x_{k + 1}) - f(x_k) - \nabla f(x_k)^{\top} (x_{k + 1} - x_k) \\
\\
& \;\; = \;\; \int\limits_{0}^1 (1 - \tau) 
(x_{k + 1} - x_k)^{\top}\nabla^2 f(x_k + \tau(x_{k + 1} - x_k)) (x_{k + 1} - x_k) d\tau
\overset{\eqref{HessBound}}{\leq}
\frac{M_{\star}}{2}\|x_{k + 1} - x_k\|^2.
\ea
$$
Substituting the gradient step~\eqref{eq:grad_method} 
and using the fact that $\eta_k \leq \frac{1}{M_{\star}}$
completes the proof.
\end{proof}

Up to now, we may expect similar convergence
properties for our method as in the standard smooth case.
The main question which remains is how to satisfy condition~\eqref{StepsizeSufficient}
on the step size. Note that the right hand side of~\eqref{StepsizeSufficient}
resembles
the classical Polyak step size commonly used in both smooth and non-smooth convex optimization~\cite{polyak1987introduction,nesterov2024primal}.

From~\eqref{OneStepProgress} we observe that the value of $\eta_k$ is responsible 
for the progress of our method. Thus, to establish the convergence
of the method we have to justify that the right hand side in~\eqref{StepsizeSufficient} 
is separated from zero. 

We prove the following useful lemma,
which generalizes upon the classical first-order smoothness and takes into
account the growth of the regularizer of degree $p \geq 2$.

\begin{lemma}
    For any $x \in \R^d$, we have
    \beq \label{GradResBound}
    \ba{rcl}
    \la \nabla f(x), x - x_{\star} \ra 
    & \geq & f(x) - f_{\star}
    + \frac{\| \nabla f(x) \|^2}{2} \min\Bigl\{  \frac{1}{M_{\star}},
      \bigl[ \frac{p}{s2^{p - 2} \| \nabla f(x) \|^{p - 2}}  \bigr]^{\frac{1}{p - 1}}  \Bigr\}.
    \ea
    \eeq
\end{lemma}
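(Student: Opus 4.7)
The plan is to recast the inequality as a gradient-dominance lower bound for a convex function obtained from $f$ by an affine shift, and then to verify it via a single controlled gradient step from $x_{\star}$.

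Introduce the auxiliary function $\phi(z) := f(z) - \la \nabla f(x), z - x\ra$. Then $\phi$ is convex with the same Hessian as $f$, and $\nabla \phi(x) = 0$, so $x$ is the global minimizer of $\phi$ with $\min \phi = \phi(x) = f(x)$. A direct expansion gives the key identity
\[
\la \nabla f(x), x - x_{\star}\ra - (f(x) - f_{\star}) \;=\; \phi(x_{\star}) - \phi(x),
\]
so the lemma reduces to lower-bounding the suboptimality of $\phi$ at $x_{\star}$. Since $\nabla \phi(x_{\star}) = -\nabla f(x)$, writing $g := \nabla f(x)$, the direction $g$ is the steepest descent for $\phi$ at $x_{\star}$.

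To bound this suboptimality, consider the step $y := x_{\star} + \eta g$ for an $\eta > 0$ to be chosen. A second-order Taylor expansion of $\phi$ along the segment $[x_{\star}, y]$, together with $\nabla^2 f(z) \preceq (L + s(p-1)\|z\|^{p-2})\, I$ and $\|x_{\star}+t\eta g\| \leq \|x_{\star}\| + \eta\|g\|$ for $t \in [0,1]$, yields
\[
\phi(y) \;\leq\; \phi(x_{\star}) - \eta\|g\|^2 + \tfrac{\eta^2 \|g\|^2}{2}\bigl[L + s(p-1)(\|x_{\star}\|+\eta\|g\|)^{p-2}\bigr].
\]
Whenever $\eta$ satisfies $\eta\bigl[L + s(p-1)(\|x_{\star}\|+\eta\|g\|)^{p-2}\bigr] \leq 1$, this gives $\phi(y) \leq \phi(x_{\star}) - \eta\|g\|^2/2$, and combining with $\phi(y) \geq \min\phi = \phi(x)$ yields the desired bound $\phi(x_{\star}) - \phi(x) \geq \eta\|g\|^2/2$. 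The two candidates inside the minimum correspond to two natural regimes of this constraint: in the \emph{short-step regime} $\eta\|g\| \leq \|x_{\star}\|$ one has $\|x_{\star}\|+\eta\|g\| \leq 2\|x_{\star}\|$ and the constraint is met by $\eta = 1/M_{\star}$; in the \emph{long-step regime} $\eta\|g\| > \|x_{\star}\|$ one has $\|x_{\star}\|+\eta\|g\| \leq 2\eta\|g\|$, and after absorbing the now-dominated $L$-term, the constraint reduces to a relation of the form $s\|g\|^{p-2}\eta^{p-1} \lesssim 1$, whose tight solution is the second candidate. Taking the minimum of the two step sizes enforces the constraint in either regime.

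The main obstacle is recovering the precise constants in the long-step regime: a naive use of $\sup_{t\in[0,1]}\|\nabla^2 f(x_{\star}+t\eta g)\|$ loses a factor of order $p(p-1)$, producing only an $\eta$ of order $(s(p-1)2^{p-1}\|g\|^{p-2})^{-1/(p-1)}$. The sharp value $(p/(s\,2^{p-2}\|g\|^{p-2}))^{1/(p-1)}$ arises when one replaces the supremum by the integrated estimate $\int_0^1 (1-t)(\|x_{\star}\|+t\eta\|g\|)^{p-2}\,dt$, which admits a clean closed form via the Bregman divergence of $r \mapsto r^p$; the factor $1/(p(p-1))$ coming from this integral combines with the $(p-1)$ prefactor from the Hessian bound to leave exactly $1/p$, matching the statement. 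Handling the case split at the short/long transition and absorbing the residual $L$-term in the long-step regime then complete the argument.
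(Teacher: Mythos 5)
Your reduction via $\phi$ is the paper's argument in different clothing: the identity $\phi(x_\star)-\phi(x)=\langle\nabla f(x),x-x_\star\rangle-(f(x)-f_\star)$ combined with $\phi(x_\star+\eta g)\geq\phi(x)$ is exactly the convexity step $f(x)-f(x_\star+h)\leq\langle\nabla f(x),x-x_\star-h\rangle$ with $h=\eta g$, and your expansion of $\phi$ along $[x_\star,x_\star+\eta g]$ with $\nabla^2 f(z)\preceq(L+s(p-1)\|z\|^{p-2})I$ is the same second-order estimate around $x_\star$ that the paper integrates. So the route coincides with the paper's; the only question is whether you can land the stated constants, and that is where the proposal has a genuine gap.

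The gap is the long-step regime, which you yourself flag as the crux but leave as a sketch that does not close. The claimed cancellation (the $\tfrac{1}{p(p-1)}$ from the integral against the $(p-1)$ Hessian prefactor, leaving exactly $\tfrac1p$) would be valid if the integrand were $(t\eta\|g\|)^{p-2}$, but it is $(\|x_\star\|+t\eta\|g\|)^{p-2}$; once the $\|x_\star\|$-part is split off (e.g.\ via $(a+b)^{p-2}\leq 2^{p-3}(a^{p-2}+b^{p-2})$, which is what the paper does), that part consumes up to $\tfrac{\eta\|g\|^2}{2}$ of the available $\eta\|g\|^2$ through $M_\star$, and the $t^{p-2}$-part consumes another $\tfrac{\eta\|g\|^2}{2}$, leaving no surplus of $\tfrac{\eta\|g\|^2}{2}$. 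Your regime split also does not align with which branch of the min is active: when $\eta\|g\|>\|x_\star\|$ the chosen $\eta$ may still equal $1/M_\star$, so the $L$-term is not ``dominated'' and cannot simply be absorbed. In fact no bookkeeping can rescue the constants exactly as stated: in one dimension with $p=3$, $A=1$, $b=0$, $s=1$, $x=0.1$ (so $x_\star=0$, $M_\star=L=1$, and the min equals $1/M_\star$) the left-hand side is $0.011$ while the right-hand side is $\approx 0.0114$. The inequality holds only after a mild constant adjustment, e.g.\ halving the step inside the min (equivalently $\tfrac{1}{2M_\star}$ and $2^{p-1}$ in place of $2^{p-2}$), and, read carefully, that is also all the paper's own substitution $h=\xi\nabla f(x)$ delivers: its two half-budgets add up to $\xi\|g\|^2$, giving a nonnegative rather than a $\tfrac{\xi\|g\|^2}{2}$ surplus. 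Such an adjustment only changes absolute constants downstream, but as written your constant-recovering step cannot be completed as described.
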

\begin{proof}
Indeed, for any $h \in \R^d$, it holds that
\beq \label{GradResProof}
\ba{rcl}
\la \nabla f(x), x - x_{\star} \ra 
& = &
\la \nabla f(x), x - x_{\star} - h \ra + \la \nabla f(x), h \ra \\
\\
& \geq & 
f(x) - f(x_{\star} + h) + \la \nabla f(x), h \ra \\
\\
& = &
f(x) - f_{\star} + \la \nabla f(x), h \ra 
- (f(x_{\star} + h) - f_{\star}).
\ea
\eeq
Estimating the second derivative,
we get the following bound for the last term in~\eqref{GradResProof}:
$$
\ba{rcl}
f(x_{\star} + h) - f_{\star} & = & 
\int\limits_0^1 (1 - \tau) \la \nabla^2 f(x_{\star} + \tau h)h, h \ra d\tau \\[10pt]
& \overset{\eqref{HessBound}}{\leq} &
\frac{L}{2}\|h\|^2 + 
s(p - 1)
\int\limits_0^1 (1 - \tau) \|x_{\star} + \tau h\|^{p - 2} \|h\|^2 d\tau \\[10pt]
& \leq &
\frac{L}{2}\|h\|^2 + 
s(p - 1)
\int\limits_0^1 (1 - \tau)\Bigl[ 
2^{p - 3} \|x_{\star}\|^{p - 2} \|h\|^2 + 2^{p - 3}\tau^{p - 2} \|h\|^{p} \Bigr] d\tau \\[10pt]
& = &
\|h\|^2 \cdot \Bigl[ 
\frac{L}{2} + s(p - 1)2^{p - 4}\|x_{\star}\|^{p - 2} + \frac{s2^{p - 3}}{p} \|h\|^{p - 2}
\Bigr].
\ea
$$
Substituting this bound into~\eqref{GradResProof} and using $h := \xi \nabla f(x)$
with 
$$
\ba{rcl}
\xi & := & \min\Bigl\{  \frac{1}{M_{\star}}, 
\bigl[ \frac{p}{s 2^{p - 2} \| \nabla f(x) \|^{p - 2}} \bigr]^{\frac{1}{p - 1}} \Bigr\}
\ea
$$
completes the proof.
\end{proof}

\begin{remark}
    From~\eqref{GradResBound}, we obtain the following important inequality,
    which we use to define the local step size for our method, for any $x \in \R^d$:
    \beq \label{GradResBound2}
    \ba{rcl}
    \frac{2 \la \nabla f(x), x - x_{\star} \ra}{ \| \nabla f(x) \|^2 }
    & \geq &
    \eta_{\star}(x) \;\; := \;\;
    \min\Bigl\{ \frac{1}{M_{\star}},
      \bigl[ \frac{p}{s2^{p - 2} \| \nabla f(x) \|^{p - 2}}  \bigr]^{\frac{1}{p - 1}}  \Bigr\},
    \ea
    \eeq
    where $M_{\star}$ is defined by~\eqref{MDef}.
\end{remark}

Finally, we also provide a bound on the functional residual at the origin for this class of functions.
\begin{lemma}\label{lem:initial_residual}
Let $p > 2$. Then we have the following bound on the functional residual at the origin:
\beq \label{FuncBoundOrigin}
\ba{rcl}
f(0) - f(\xopt) & \leq & 
2^{-3} \cdot ( (p - 1) s)^{-\frac{2}{p-2}} \cdot
M_\star^{\frac{p}{p-2}}.
\ea
\eeq
\end{lemma}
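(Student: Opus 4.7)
The plan is to estimate $f(0) - f(\xopt)$ via a second-order Taylor expansion along the segment joining $\xopt$ to $0$, exploiting the fact that the Hessian of $f$ is controlled by $M_\star$ on this segment, and then to eliminate the factor $\|\xopt\|^2$ using the very definition of $M_\star$.

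First, I would write Taylor's formula
\[
f(0) \;=\; f(\xopt) + \nabla f(\xopt)^\top(-\xopt) + \int_0^1 (1-\tau)\, \xopt^\top \nabla^2 f\bigl((1-\tau)\xopt\bigr)\, \xopt \, d\tau.
\]
Since $\nabla f(\xopt) = 0$ and $\|(1-\tau)\xopt\| \leq \|\xopt\|$ for $\tau \in [0,1]$, the Hessian bound~\eqref{HessBound} gives
\[
\nabla^2 f\bigl((1-\tau)\xopt\bigr) \;\preceq\; \bigl(L + s(p-1)\|\xopt\|^{p-2}\bigr) I \;\preceq\; M_\star\, I,
\]
where the last step uses $2^{p-2} \geq 1$ for $p \geq 2$. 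Plugging this in and integrating yields the clean bound $f(0) - f(\xopt) \leq \tfrac{M_\star}{2}\|\xopt\|^2$.

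Next, from the definition~\eqref{MDef} of $M_\star$, one immediately reads off $s(p-1)2^{p-2}\|\xopt\|^{p-2} \leq M_\star$, so that
\[
\|\xopt\|^2 \;\leq\; \Bigl(\tfrac{M_\star}{s(p-1)\,2^{p-2}}\Bigr)^{\!2/(p-2)} \;=\; \frac{M_\star^{2/(p-2)}}{4\,\bigl((p-1)s\bigr)^{2/(p-2)}}.
\]
Substituting this into the previous display gives $f(0) - f(\xopt) \leq 2^{-3}\,((p-1)s)^{-2/(p-2)}\, M_\star^{p/(p-2)}$, which matches \eqref{FuncBoundOrigin} exactly (including the constant).

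There is essentially no obstacle here: the proof is two lines once one notices (a) that both $0$ and $\xopt$ lie at distance $\|\xopt\|$ from the minimizer so the Hessian estimate is the trivial one, and (b) that $M_\star$ itself dominates $s(p-1)\|\xopt\|^{p-2}$ up to a factor $2^{p-2}$ that combines with the $\tfrac12$ from Taylor to produce exactly $2^{-3}$. The only minor care needed is to check the constant $2^{-3}$ precisely, which comes from $\tfrac{1}{2} \cdot \tfrac{1}{(2^{p-2})^{2/(p-2)}} = \tfrac{1}{2}\cdot\tfrac{1}{4} = \tfrac{1}{8}$.
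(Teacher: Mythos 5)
Your proof is correct, and it reaches the paper's bound with the exact constant. The difference from the paper lies only in how the intermediate inequality $f(0) - f_\star \leq \frac{M_\star}{2}\|\xopt\|^2$ is obtained: you Taylor-expand around $\xopt$, use $\nabla f(\xopt) = 0$, and control the Hessian along the segment $(1-\tau)\xopt$ via \eqref{HessBound} (valid since $\|(1-\tau)\xopt\| \leq \|\xopt\|$ and $2^{p-2}\geq 1$), whereas the paper computes $f(0) - f_\star$ exactly from the stationarity condition \eqref{StatCondition}, namely $f(0)-f_\star = \frac{1}{2}\xopt^\top A \xopt + s\frac{p-1}{p}\|\xopt\|^p$, and then bounds the two terms by $\frac{\|\xopt\|^2}{2} M_\star$. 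Both routes give the same intermediate bound, and from there the arguments coincide: read off $s(p-1)2^{p-2}\|\xopt\|^{p-2} \leq M_\star$ from the definition \eqref{MDef}, solve for $\|\xopt\|^2$, and substitute, with $\frac{1}{2}\cdot (2^{p-2})^{-2/(p-2)} = 2^{-3}$. Your variant is marginally more generic, as it only uses first-order optimality plus the Hessian bound (and the $C^2$ smoothness of the regularizer for $p>2$, which the paper also relies on); the paper's variant has the small advantage of producing an exact expression for the residual before any relaxation, which makes the slack in the bound transparent. Either way the final estimate \eqref{FuncBoundOrigin} is identical.
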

\begin{proof}
    First, we note that
    \beq
    \ba{rcl}
    f(0) - f(\xopt) & = & b^\top \xopt - \frac{1}{2} \xopt^\top A \xopt - \frac{s}{p} \|\xopt\|^p 
    \;\; \stackrel{\eqref{StatCondition}}{=} \;\;
    \frac{1}{2} \xopt^\top A \xopt + s\Big(\frac{p-1}{p}\Big) \|\xopt\|^p\\
    \\
    & \leq & \frac{\|\xopt\|^2}{2} \Big[L + 2s \Big(\frac{p-1}{p}\Big)\|\xopt\|^{p-2}\Big]
    \;\; \leq \;\; \frac{\|\xopt\|^2}{2} \cdot M_\star,
    \ea
    \eeq
where the last two inequalities follow from the definitions of $L$ and $M_\star$.
It remains to notice that
\beq \label{MOptBound}
\ba{rcl}
M_{\star} & := & L + s(p - 1)2^{p - 2}\|\xopt\|^{p - 2}
\;\; \geq \;\; s(p - 1)2^{p - 2}\|\xopt\|^{p - 2},
\ea
\eeq
and therefore,
$$
\ba{rcl}
\frac{\| \xopt \|^2}{2} & \overset{\eqref{MOptBound}}{\leq} &
\frac{1}{2} \Bigl(  \frac{M_{\star}}{s(p - 1)2^{p - 2}} \Bigr)^{\frac{2}{p - 2}}
\ea
$$
which proves the desired bound.
\end{proof}

\subsection{Convergence Rates}

From the previous results, we immediately obtain a step size rule to choose
$\eta_k$, for every $k \geq 0$:
\beq \label{ConstantEtaK}
\boxed{
\ba{rcl}
\eta_k & := & \eta_{\star}(x_k),
\ea
}
\eeq
where $\eta_{\star}(\cdot)$ is defined by~\eqref{GradResBound2},
and
which guarantees progress condition~\eqref{OneStepProgress}
for every step of our method.
The main drawback is that it requires knowledge of the constant $M_{\star}$,
which includes the Lipschitz constant $L$ and the bound on the solution $\|x_{\star}\|$.
Instead, we can use a simple backtracking line-search procedure (see Algorithm~\ref{alg:line_search}) that sets
\beq \label{BacktrackingEtaK}
\boxed{
\ba{rcl}
\eta_k & := &
\min\Bigl\{ \frac{1}{M_k}, \bigl[ \frac{p}{s2^{p - 2} \| \nabla f(x_k) \|^{p - 2}} \bigr]^{\frac{1}{p - 1}}   \Bigr\},
\ea
}
\eeq
for some local estimates $M_k$ of the parameter $M_{\star}$. We only assume that
these coefficients are bounded:
\beq \label{BoundedMk}
\ba{rcl}
M_k & \leq & \max\{ 2M_{\star}, M_0 \},
\ea
\eeq
and that the progress inequality~\eqref{OneStepProgress} is satisfied
for our choice of $M_k$. These assumptions clearly hold both for the theoretical
step size rule~\eqref{ConstantEtaK} and for the iterations of our adaptive Algorithm~\ref{alg:line_search}.

Now, we are ready to establish a convergence rate for the method.

\begin{theorem} \label{TheoremGMRate}
Let $\{x_k\}_{k \geq 0}$ be iterations of the gradient method~\eqref{eq:grad_method}
starting from $x_0 = 0 \in \R^d$,
as applied to problem~\eqref{eq:prob}
with $p > 2$.
Let step sizes $\eta_k$ be chosen according to~\eqref{BacktrackingEtaK},
where $M_k$ is bounded~\eqref{BoundedMk}, and assume that progress inequality~\eqref{OneStepProgress} holds. 

Then the functional residual satisfies the following bound:
\beq \label{GMRate}
\ba{rcl}
f(x_k) - f_{\star} & \leq & 
F_0 \cdot \biggl[ 
\frac{8\max\{ 2M_{\star}, M_0 \}}{(4s)^{2/p}} \cdot (p-1)^{\frac{2(p-1)}{p}}
\cdot
\Bigl((pF_0)^{\frac{p-2}{p}} (p-2) k  + 1
\Bigr)^{-1}
\biggr]^{\frac{p}{p - 2}},
\ea
\eeq
where $F_0 := f(0) - f_{\star}$.
\end{theorem}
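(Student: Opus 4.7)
The plan is to combine the one-step progress bound~\eqref{OneStepProgress} with the uniform-convexity gradient inequality~\eqref{eq:grad_ineq} into a sublinear recursion on $\delta_k := f(x_k) - f_{\star}$, and then to unroll it via the standard integral trick for sequences of the form $\delta_k - \delta_{k+1} \geq c\,\delta_k^\alpha$ with $\alpha > 1$. With $M_{\max} := \max\{2M_{\star},M_0\}$, $\alpha := 2(p-1)/p > 1$, and
\[
c \;:=\; \frac{1}{2M_{\max}}\left[\frac{p\,\sigma_p^{1/(p-1)}}{p-1}\right]^{\alpha},
\]
the target is to prove $\delta_k - \delta_{k+1} \geq c\,\delta_k^\alpha$ and then to convert the resulting bound into~\eqref{GMRate} using the numerical identity $\sigma_p^{2/p} = (4s)^{2/p}/4$, which is immediate from $\sigma_p = s\,2^{2-p}$.

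To establish the recursion, split on the two branches of the $\min$ in~\eqref{BacktrackingEtaK}. When $\eta_k = 1/M_k$ is active, the bound $M_k \leq M_{\max}$ from~\eqref{BoundedMk} gives $\eta_k \geq 1/M_{\max}$; substituting $\|\nabla f(x_k)\|^2 \geq \bigl[\tfrac{p\sigma_p^{1/(p-1)}}{p-1}\bigr]^{\alpha}\delta_k^{\alpha}$ (a direct consequence of~\eqref{eq:grad_ineq}) into~\eqref{OneStepProgress} gives the target recursion at once. When the second branch is active, the progress equals $\tfrac12[p/(s\,2^{p-2})]^{1/(p-1)}\|\nabla f(x_k)\|^{p/(p-1)}$, which by~\eqref{eq:grad_ineq} is at least a $p$-dependent constant $\gamma$ times $\delta_k$, giving \emph{linear} decrease $\delta_{k+1} \leq (1-\gamma)\delta_k$. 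Since~\eqref{OneStepProgress} also forces monotonicity $\delta_k \leq F_0$, this linear step realizes at least the required sublinear decrement once $\gamma \geq c\,F_0^{(p-2)/p}$; verifying that uniform comparison with the help of the explicit bound on $F_0$ in Lemma~\ref{lem:initial_residual} is the main technical check.

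Given the recursion, since $\alpha > 1$ and $\delta_{k+1} \leq \delta_k$,
\[
c \;\leq\; \frac{\delta_k - \delta_{k+1}}{\delta_k^\alpha} \;\leq\; \int_{\delta_{k+1}}^{\delta_k} \frac{dx}{x^\alpha} \;=\; \frac{\delta_{k+1}^{-(\alpha-1)} - \delta_k^{-(\alpha-1)}}{\alpha-1},
\]
and telescoping over $k = 0,\dots,N-1$ followed by inversion gives $\delta_N \leq F_0\bigl[1 + c(\alpha-1)F_0^{\alpha-1}N\bigr]^{-1/(\alpha-1)}$. Using $\alpha - 1 = (p-2)/p$ and expanding
\[
c(\alpha-1)F_0^{\alpha-1} \;=\; \frac{(p-2)(pF_0)^{(p-2)/p}(4s)^{2/p}}{8M_{\max}(p-1)^{2(p-1)/p}}
\]
(this is where the identity $\sigma_p^{2/p} = (4s)^{2/p}/4$ enters) and then re-factoring the bracket casts the bound into the exact form of~\eqref{GMRate}. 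Keeping track of the constants through this final factorization is the principal arithmetic obstacle.
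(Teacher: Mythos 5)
Your proposal is correct and follows essentially the same route as the paper: the same recursion $F_k - F_{k+1} \geq c\,F_k^{2(p-1)/p}$ with the same constant ($c$ equals the paper's $c_2$), the same case split on the two branches of the step size with the second branch reduced to the comparison $\gamma \geq c\,F_0^{(p-2)/p}$ via monotonicity and Lemma~\ref{lem:initial_residual} (exactly the paper's check that $c_1 F_k \geq c_2 F_k^{\alpha}$, which the paper reduces to a numerical inequality in $p$ that indeed holds for all $p>2$), and the same telescoping, where your integral comparison is an interchangeable substitute for the paper's power-function inequality~\eqref{PowerFuncProp}.
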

\begin{proof}
We denote $F_k := f(x_k) - f_{\star}$. 
By uniform convexity~\eqref{eq:grad_ineq}
with constant~\eqref{eq:sigma_value},
$\sigma_p = s 2^{2 - p}$ of our objective, we have the following inequality:
\beq \label{GradBoundUC}
\ba{rcl}
\| \nabla f(x_k) \|^{\frac{p}{p - 1}}
& \geq &
\bigl[ \frac{p}{p - 1}  \bigr] \cdot \bigl( s2^{2 - p} \bigr)^{\frac{1}{p - 1}} \cdot F_k.
\ea
\eeq
Therefore,
for every iteration $k \geq 0$ of the gradient method,
we have
\beq \label{FuncProgress}
\ba{rcl}
F_k - F_{k + 1} & \overset{\eqref{OneStepProgress}}{\geq} &
\frac{\eta_k}{2}\| \nabla f(x_k) \|^2
\;\; \overset{\eqref{BacktrackingEtaK}}{=} \;\;
\frac{1}{2}
\min\Bigl\{  
\frac{\| \nabla f(x_k) \|^2}{M_k},
\bigl(  \frac{p}{s2^{p - 2}} \bigr)^{\frac{1}{p - 1}}
\| \nabla f(x_k) \|^{\frac{p}{p - 1}}
\Bigr\} \\
\\
& \overset{\eqref{BoundedMk}, \eqref{GradBoundUC}}{\geq} &
\min\Bigl\{  c_2 F_k^{\frac{2(p - 1)}{p}}, \;\;c_1 F_k  \Bigr\},
\ea
\eeq
where $c_2 := \frac{1}{2} \cdot \bigl[ \frac{p}{p - 1} \bigr]^{\frac{2(p-1)}{p}}
\cdot \frac{(s 2^{2 - p})^{2 / p}}{\max\{ 2M_{\star}, M_0\}}
$
and $c_1 := \frac{1}{2} \cdot \bigl[\frac{p}{p - 1} \bigr] \cdot 
[\, p 2^{2(2 - p)} \,]^{\frac{1}{p - 1}}$.
We show that that $c_1 F_k \geq c_2 F_k^{\frac{2(p - 1)}{p}}$ always,
which is equivalent to check that
\beq \label{C1C2Bound}
\ba{rcl}
F_k & \leq & 
\Bigl[ \frac{c_1}{c_2} \Bigr]^{\frac{p}{p - 2}}
\;\; = \;\;
\biggl[ 
\frac{\max\{ 2M_{\star}, M_0 \} \cdot p^{\frac{1}{ p - 1}} }{ 
[ \frac{p}{p - 1} ]^{\frac{p - 2}{p}} \cdot s^{\frac{2}{p}}}
\cdot 2^{\frac{2(2 - p)}{p - 1} - \frac{2(2 - p)}{p}}
\biggr]^{\frac{p}{p - 2}} \\
\\
& = &
\frac{(p - 1) \cdot \max\{2M_{\star}, M_0 \}^{\frac{p}{p - 2}}}{
2^{\frac{2}{p - 1}} \cdot s^{\frac{2}{p - 2}}} 
\cdot p^{ \frac{p}{(p - 1)(p - 2)} - 1}.
\ea
\eeq
In view of monotonicity of the sequence $\{ F_k \}_{k \geq 0}$
and applying Lemma~\ref{lem:initial_residual}, we have that
$$
\ba{rcl}
F_k & \leq & F_0 \;\; \overset{\eqref{FuncBoundOrigin}}{\leq} \;\;
\frac{M_{\star}^{\frac{p}{p - 2}}}{
2^3 \cdot s^{\frac{2}{p - 2}} \cdot (p - 1)^{\frac{2}{p - 2}}.
}
\ea
$$
Therefore, to establish~\eqref{C1C2Bound}, it is sufficient to verify
the following inequality:
$$
\ba{rcl}
2^{3 + \frac{p}{p - 2} - \frac{2}{p - 1}}
\cdot 
(p - 1)^{\frac{p}{p - 2}}
\cdot 
p^{\frac{p}{(p - 1)(p - 2)} - 1}
& \geq & 1,
\ea
$$
which holds for any $p > 2$.

Hence, applying bound~\eqref{C1C2Bound} in~\eqref{FuncProgress}, we obtain the following inequality characterizing the per-iteration progress:
\beq \label{ProgressGM}
\ba{rcl}
F_k - F_{k + 1} & \geq & c_2 F_k^{\alpha}, \qquad 
\alpha \;\; := \;\; \frac{2(p - 1)}{p} \;\; \geq \;\; 1.
\ea
\eeq
By properties of the power function, for an arbitrary $\gamma > 0$ and $a > b > 0$
we have
\beq \label{PowerFuncProp}
\ba{rcl}
a - b & \geq & \frac{a^{\gamma} - b^{\gamma}}{\gamma a^{\gamma - 1}}.
\ea
\eeq
Applying this inequality with $\gamma := \frac{1}{\alpha - 1}$, we get
$$
\ba{rcl}
\frac{1}{F_{k + 1}^{\alpha - 1}} - \frac{1}{F_k^{\alpha - 1}} & = & 
\frac{ F_k^{\alpha - 1} - F_{k + 1}^{\alpha - 1} }{F_k^{\alpha - 1} F_{k + 1}^{\alpha - 1}}
\; \overset{\eqref{PowerFuncProp}}{\geq} \;
(\alpha - 1)\frac{F_k - F_{k + 1}}{F_k F_{k + 1}^{\alpha - 1}}
\; \overset{\eqref{ProgressGM}}{\geq} \; c_2 (\alpha - 1) \frac{F_k^{\alpha - 1}}{F_{k + 1}^{\alpha - 1}}
\; \geq \; c_2(\alpha - 1).
\ea
$$
Therefore, telescoping this bound for the first $k$ iterations, we have
$
F_k \leq \Bigl(\frac{F_0^{\alpha - 1}}{F_0^{\alpha - 1} c_2 (\alpha - 1) k + 1}\Bigr)^{\frac{1}{\alpha - 1}}$.
Substituting the values of constants gives us~\eqref{GMRate}, which completes the proof. 
\end{proof}

\begin{corollary}
According to Theorem~\ref{TheoremGMRate},
for any given $\varepsilon > 0$,
the gradient method needs to perform
$$
\ba{rcl}
K & = & \mathcal{O}\Bigl[
\Bigl(
\frac{L + s \|\xopt\|^{p-2}}{s^{2/p}\varepsilon}
\Bigr)^{\frac{p}{p-2}}
\Bigr]
\ea
$$
iterations to reach $f(x_K) - f_{\star} \leq \varepsilon$.
\end{corollary}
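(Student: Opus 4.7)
The plan is to invert the convergence bound from Theorem~\ref{TheoremGMRate} and solve for the smallest number of iterations $K$ needed to drive the functional residual below $\varepsilon$. Starting from~\eqref{GMRate}, I would first isolate the asymptotic behavior by dropping the $+1$ in the denominator inside the brackets; this is valid up to an absolute constant once $(pF_0)^{(p-2)/p}(p-2)\,k \geq 1$.

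The key algebraic observation is that $F_0$ cancels out of the resulting expression: the outer factor of $F_0$ in~\eqref{GMRate} is matched by the factor $(pF_0)^{(p-2)/p \cdot p/(p-2)} = p F_0$ that appears after raising the denominator to the outer power $p/(p-2)$. Because the two exponents $(p-2)/p$ and $p/(p-2)$ multiply to $1$, this cancellation is exact, and the bound reduces to the $F_0$-independent form
$$
f(x_k) - f_\star \;\leq\; \mathcal{O}\!\left(\frac{M_\star^{p/(p-2)}}{s^{2/(p-2)} \, k^{p/(p-2)}}\right),
$$
where the hidden constant depends only on $p$.

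Next, setting the right-hand side $\leq \varepsilon$ and solving for $K$ gives an explicit iteration complexity bound. The final step is to eliminate $M_\star$ in favor of the original problem parameters by invoking its definition~\eqref{MDef}: $M_\star = L + s(p-1)\,2^{p-2}\,\|\xopt\|^{p-2} = \mathcal{O}(L + s\,\|\xopt\|^{p-2})$, with a $p$-dependent multiplicative constant that is absorbed into the $\mathcal{O}(\cdot)$. Rearranging the resulting expression into the form stated in the corollary completes the proof.

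There is essentially no substantive obstacle: the entire derivation is algebraic manipulation of the rate~\eqref{GMRate}, combined with the cancellation of $F_0$ and the bound on $M_\star$. The one conceptually noteworthy point is that this cancellation of $F_0$ is what ensures the iteration complexity depends only on $M_\star$, $s$, $p$, and $\varepsilon$, and not on the initial functional gap.
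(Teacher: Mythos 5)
Your overall route --- invert the bound \eqref{GMRate}, use the exact cancellation of $F_0$ (the exponents $\tfrac{p-2}{p}$ and $\tfrac{p}{p-2}$ are reciprocal), and then replace $M_{\star}$ via \eqref{MDef} by $\mathcal{O}(L+s\|\xopt\|^{p-2})$ --- is precisely the calculation the corollary is implicitly based on, and your intermediate bound $f(x_k)-f_\star = \mathcal{O}\bigl(M_{\star}^{p/(p-2)}\, s^{-2/(p-2)}\, k^{-p/(p-2)}\bigr)$ is correct (you implicitly also need $M_0=\mathcal{O}(M_{\star})$, e.g.\ the exact step size \eqref{ConstantEtaK}, a minor point).

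The last step, however, does not go through as you state it. Solving $M_{\star}^{p/(p-2)} s^{-2/(p-2)} K^{-p/(p-2)} \leq \varepsilon$ gives $K = \mathcal{O}\bigl(\tfrac{L+s\|\xopt\|^{p-2}}{s^{2/p}}\,\varepsilon^{-(p-2)/p}\bigr)$, equivalently $K=\mathcal{O}\bigl(\bigl[\tfrac{(L+s\|\xopt\|^{p-2})^{p/(p-2)}}{s^{2/(p-2)}\,\varepsilon}\bigr]^{(p-2)/p}\bigr)$, and this cannot be ``rearranged'' into the displayed expression $\bigl(\tfrac{L+s\|\xopt\|^{p-2}}{s^{2/p}\varepsilon}\bigr)^{p/(p-2)}$: the two differ in the $\varepsilon$-dependence, $\varepsilon^{-(p-2)/p}$ versus $\varepsilon^{-p/(p-2)}$ (for $p=3$, $\varepsilon^{-1/3}$ versus $\varepsilon^{-3}$). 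Your derived complexity is the sharper one and is the one consistent with the rate $\mathcal{O}(N^{-p/(p-2)})$; the displayed bound is much larger, so running that many iterations still reaches accuracy $\varepsilon$ by monotonicity of $\{f(x_k)\}$ whenever the displayed quantity dominates your bound (which holds for all sufficiently small $\varepsilon$, with the threshold depending on $M_{\star}/s^{2/p}$), but it is not what the algebra produces. So either state the sharper complexity you actually obtain, or add the explicit domination/monotonicity argument; asserting that the expression ``rearranges into the form stated'' is the one step of your proposal that is false as written.
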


\subsection{Adaptive Step Size}

In practice, we are unlikely to know the value of $\|\xopt\|$ beforehand,
as well as the exact value of the leading eigenvalue $L$ of matrix $A$.
While the result from the previous section proves the existence of a step size~\eqref{ConstantEtaK} that will converge on our class of functions, and such a step size can be found using \emph{e.g.} a grid-search procedure, in practice we would prefer a method without such a tuning-sensitive parameter. 
In what follows we propose a simple backtracking line-search procedure that adjusts the step size automatically for our problem.

It can be seen as a modification of the standard Armijo-type backtracking rule~\cite{nesterov_lectures_2018}, which is more suitable
for our problem class. At every iteration, we keep a local estimate $M_k$ 
of the global unknown parameter $M_{\star}$, defined by~\eqref{MDef}.
We use $M_k$ to define the step size $\eta_k$ according to our theory.
Then, we increase $M_k$ until the sufficient decrease condition
is not satisfied. We also try to decrease $M_k$ at every iteration,
to exploit the existence of a potentially smaller local Lipschitz constant. Our method can be formulated as follows in algorithmic form.

\begin{algorithm}[h!]
\caption{Adaptive Gradient Method}\label{alg:line_search}
\begin{algorithmic}
\State{\bfseries initialization:} $x_0 \in \R^d, M_0 > 0$
\For{$k \gets 0, 1, \ldots $}
\State $M_+ \gets \frac{1}{4} M_k$ \Comment{Reduce the previous estimate of $M_{\star}$}
\Repeat
\State $M_+ \gets 2 \cdot M_+$ \Comment{Increase the estimate}
\State $\eta_+ \gets \min\bigl\{ \frac{1}{M_+}, 
\bigl[ 
\frac{p}{s2^{p - 2}\| \nabla f(x_k) \|^{p - 2}}
\bigr]^{\frac{1}{p - 1}} \bigr\}$ \Comment{Compute the step size}
\State $x_+ \gets x_k - \eta_+ \nabla f(x_k)$ \Comment{Perform the gradient step}
\Until{$f(x_k) - f(x_+) \geq \frac{\eta_{+}}{2} \|\nabla f(x_k)\|^2$}
\State $M_{k+1} \gets M_+$
\State $x_{k+1} \gets x_+$
\EndFor
\end{algorithmic}
\end{algorithm}

According to our theory, the inner loop of this algorithm is well-defined,
and the stopping condition is reached at least for $M_+ \geq M_{\star}$.
Thus, for every $k \geq 0$, bound~\eqref{BoundedMk} holds for our estimates.
Therefore, the result of Theorem~\ref{TheoremGMRate}
is immediately applicable to this procedure,
and we establish the same convergence rate as for the constant step size rule.

At the same time, denoting by $n_k$ the number of function evaluations
performed at iteration $k \geq 0$, we notice that
$M_{k + 1} = 2^{n_k - 2} M_k$.
Therefore, the total number $N_k$ of function evaluations after $k$ iterations is
$$
\ba{rcl}
N_k & = & \sum\limits_{i = 0}^{k - 1} n_i
\;\; = \;\;
\sum\limits_{i = 0}^{k - 1} \bigl[ 2 + \log_2 \frac{M_{i + 1}}{M_i} \bigr]
\;\; = \;\;
2k + \log_2 \frac{M_k}{M_0}
\;\; \overset{\eqref{BoundedMk}}{\leq} \;\;
2k + \max\{ 1 + \log_2 \frac{M_{\star}}{M_0}, \, 0 \}.
\ea
$$
Therefore, on average, our adaptive procedure performs only one extra function evaluation
per iteration, up to an additive logarithmic term.

\section{Lower Bounds for One-Step Methods}
\label{sec:one_step_lower_bounds}

In the last section, we introduced a new step size for gradient descent proving the same rate
of $\mathcal{O}(N^{-\frac{p}{p - 2}})$ for $p > 2$, where $N$ is the number of iterations, as for the composite gradient method developed in~\cite{nesterov_inexact_2022}. A natural question is whether 
this rate is the best we can achieve.

In this section, we analyze a class of \textit{one-step gradient methods},
that covers gradient descent with our step size setting and the composite gradient method~\cite{nesterov_inexact_2022}.
We establish a lower bound on the rate of convergence of these methods
that matches our upped bound, thus establishing that this rate is optimal for
one-step methods. In the next section, we will establish lower bounds
for more general classes of first-order algorithms beyond one-step methods.

Let us consider the following form of iterates,
starting with $x_0 = 0$:
\beq\label{eq:general_method}
\boxed{
\ba{rcl}
    x_{k+1} & \coloneqq & \alpha_k x_k - \beta_k \nabla q(x_k),
    \qquad k \geq 0,
\ea
}
\eeq
where $q(x) \coloneqq \frac{1}{2}x^\top A x - b^\top x$ is the quadratic part 
of our problem~\eqref{eq:prob}, and $\alpha_k, \beta_k > 0$ are some parameters.

This construction covers the following two most important cases:
\begin{enumerate}
    \item \textit{Gradient Descent}, that we studied before, fits under our framework~\eqref{eq:general_method} with the following parameterization:
\end{enumerate}
\beq \label{GradDescentParameters}
\ba{rcl}
x_{k+1} & \coloneqq & x_k - \eta_k \nabla f(x_k) \\[10pt]
& = & \underbrace{(1 - \eta_k s\|x_k\|^{p-2})}_{\alpha_k}x_k - \underbrace{\eta_k}_{\beta_k}\nabla q(x_k).
\ea
\eeq
\begin{enumerate}
    \setcounter{enumi}{1}
    \item \textit{The Composite Gradient Method}~\cite{nesterov_inexact_2022},
    which uses a linearization of
the quadratic part of~\eqref{eq:prob} but keeps the regularization term present in the model. 
For simplicity, let us fix the regularization parameter to be constant and equal to the Lipschitz constant $L$ of the gradient of $q(\cdot)$.
The composite gradient method fits our framework as follows:
\beq \label{CompositeGMParameters}
\ba{rcl}
x_{k+1} &\coloneqq& \argmin\limits_{y \in \R^d} 
\Bigl\{ 
\la \nabla q(x_k), y - x_k \ra + \frac{L}{2} \|y - x_k \|^2
+ \frac{s}{p}\|y\|^p \Bigr\} \\[15pt]
& = &
\underbrace{\textstyle \frac{L}{L + s\|x_{k+1}\|^{p-2}}}_{\alpha_k}x_k - \underbrace{\textstyle \frac{1}{L + s \|x_{k+1}\|^{p - 2}}}_{\beta_k} \nabla q(x_k).
\ea
\eeq
Note that the norm of $\|x_{k+1}\| \equiv r_{k + 1}$ can be found as the solution to the following nonlinear univariate equation:
\beq \label{CompositeNonlinearEq}
\ba{rcl}
\bigl( L + s r_{k + 1}^{p-2} \bigr) r_{k + 1} & = & \|L x_k - \nabla q(x_k)\|,
\ea
\eeq
which provides an alternative to the novel step size that we introduced in the previous section for the gradient descent.
It was established in~\cite{nesterov_inexact_2022}
that this algorithm also has a convergence rate of $\ub(k^{-p / (p - 2)})$,
similar to the one established in Theorem~\ref{TheoremGMRate}.
It has, until now, been an open question whether this rate is optimal for one-step methods. 
\end{enumerate}

Our lower bound will work for an arbitrary matrix $A \succeq 0$ with a spectrum from the given range $[\mu, L]$.
Let us denote by $v \in \R^d$ the eigenvector of unit length of matrix $A$ that corresponds to the smallest eigenvalue $\mu$. Thus, it holds
$Av = \mu v$ and $\| v \| = 1$.
We also choose an arbitrary parameter $r > 0$ and place the minimum of~\eqref{eq:prob} to be at the following point:
\beq \label{LBSol}
\ba{rcl}
\xopt & = & r v.
\ea
\eeq
To ensure~\eqref{LBSol}, it suffices to choose vector $b \in \R^d$ correspondingly as
\beq \label{BVectorChoice}
\ba{rcl}
b & := & (\mu r + sr^{p - 1}) v.
\ea
\eeq
Indeed, direct substitution of this choice into stationary condition~\eqref{StatCondition} implies that the unique solution
to our problem satisfies~\eqref{LBSol}. Note that starting our method~\eqref{eq:general_method} from $x_0 = 0$
we ensure that iterates always belong to the line spanned by $v$. Thus, we have $x_{k} \equiv r_k v$, where sequence $\{ r_k \}_{k \geq 0}$ satisfies
\beq \label{RSeq}
\ba{rcl}
r_{k + 1} & := & 
(\alpha_k - \beta_k \mu) r_k + \beta_k (\mu r + sr^{p - 1}),
\qquad k \geq 0, \qquad r_0 = 0.
\ea
\eeq
We are ready to state our main condition on the method which provides us with the lower bound on its rate of convergence. 
We denote the following modified condition number,
\beq \label{LBQDef}
\ba{rcl}
\bar{Q} & := & \frac{L + s(p - 1)r^{p - 2}}{\mu + s(p - 1) r^{p - 2}} \;\; \geq \;\; 1.
\ea
\eeq
\begin{lemma}
    Suppose we are running a general one-step method~\eqref{eq:general_method} on our construction. 
    For any $k \geq 0$, let
    \beq \label{LBCondition}
    \ba{rcl}
    r_{k} & \leq & r,
    \quad \text{and} \quad
    (r - r_{k + 1}) \;\; \geq \;\; (r - r_k)(1 - \bar{Q}^{-1}).
    \ea
    \eeq
    Then, 
    we have the following lower bound:
$$
\ba{rcl}
\|x_N - \xopt \| & \geq & r \exp\Bigl(-\frac{N}{\bar{Q} - 1}\Bigr).
\ea
$$
In particular, setting $\mu := 0$, we obtain:
\beq \label{ItersLB}
\ba{rcl}
\|x_N - \xopt \| & \geq & r \exp\Bigl(-\frac{s(p - 1)r^{p - 2} N}{L}\Bigr).
\ea
\eeq
\end{lemma}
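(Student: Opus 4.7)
The statement is purely about the scalar recursion for $r_k$; once we are on the line spanned by $v$, everything reduces to controlling the one-dimensional distance $r - r_k$. So the plan is to simply iterate the second inequality in \eqref{LBCondition} and then convert a geometric factor into an exponential factor.

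First, since $x_k = r_k v$ and $\xopt = r v$, I would observe that whenever $r_k \leq r$, one has the identity
\begin{equation*}
    \|x_N - \xopt\| \;=\; |r - r_N| \;=\; r - r_N.
\end{equation*}
Thus it suffices to lower-bound $r - r_N$. Applying the second inequality in \eqref{LBCondition} at every step and telescoping from $k = 0$ up to $k = N - 1$, I obtain
\begin{equation*}
    r - r_N \;\geq\; (r - r_0)\bigl(1 - \bar{Q}^{-1}\bigr)^{N} \;=\; r\bigl(1 - \bar{Q}^{-1}\bigr)^{N},
\end{equation*}
where I used $r_0 = 0$.

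Next, to pass from the geometric form $(1-\bar{Q}^{-1})^N$ to the claimed exponential form $\exp(-N/(\bar{Q}-1))$, I would use the elementary inequality $\ln(1+t) \leq t$ for $t > -1$, applied with $t = 1/(\bar{Q}-1)$. This gives
\begin{equation*}
    \ln\!\Bigl(1 - \bar{Q}^{-1}\Bigr) \;=\; -\ln\!\Bigl(1 + \tfrac{1}{\bar{Q}-1}\Bigr) \;\geq\; -\tfrac{1}{\bar{Q}-1},
\end{equation*}
so that $(1 - \bar{Q}^{-1})^N \geq \exp(-N/(\bar{Q}-1))$, yielding the general lower bound. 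For the specialization to $\mu = 0$, I would just plug $\mu = 0$ into the definition \eqref{LBQDef}, which gives $\bar{Q} - 1 = L/(s(p-1)r^{p-2})$, and therefore $N/(\bar{Q}-1) = s(p-1)r^{p-2} N / L$, recovering \eqref{ItersLB}.

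There is really no hard step: the substance of the lemma is already packaged into hypothesis \eqref{LBCondition}, and all that remains is a one-line telescoping argument together with the standard $\ln(1+t) \leq t$ estimate. The only thing to be careful about is that $r_N \leq r$ (so we can drop the absolute value), but that is precisely the first part of assumption \eqref{LBCondition}.
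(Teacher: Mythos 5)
Your proof is correct and follows essentially the same argument as the paper: identify $\|x_N - \xopt\| = r - r_N$, telescope the assumed per-step inequality to get $r(1-\bar{Q}^{-1})^N$, and pass to the exponential via the standard $\ln(1+t)\leq t$ bound (which the paper asserts without spelling out), with the $\mu=0$ case being a direct substitution into \eqref{LBQDef}.
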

\begin{proof}
    Indeed, due to our construction, we have
    \beq \label{eq:gm_norm_bound}
    \ba{rcl}
    \| x_{N} - x_{\star} \| & = & r - r_{N}
    \;\; \overset{\eqref{LBCondition}}{\geq} \;\;
    r (1 - \bar{Q}^{-1})^N
    \;\; \geq \;\;
    r \exp\Big(-\frac{N}{\bar{Q} - 1}\Big).
    \ea
    \eeq
\end{proof}

Let us check that both the basic gradient descent and the composite gradient method satisfy our condition.

\begin{lemma} \label{LemmaGMCondition}
    Consider iterations of gradient descent~\eqref{GradDescentParameters}
    with a step-size satisfying:
\beq \label{EtaKLBCondition}
\ba{rcl}
\eta_k & \leq & \frac{1}{L + s(p - 1) r^{p - 2}}
\;\; \equiv \;\;
\frac{1}{L + s(p - 1) \| \xopt \|^{p - 2}}.
\ea
\eeq
    Then, 
    condition~\eqref{LBCondition} holds.
\end{lemma}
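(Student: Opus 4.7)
The plan is to reduce everything to a scalar recursion for $r_k$, show it is monotone non-decreasing and stays below $r$, and then obtain the contraction inequality by a clean one-line bound on a divided difference.

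First, I substitute the gradient descent parametrization into~\eqref{RSeq}. With $\alpha_k - \beta_k \mu = 1 - \eta_k s r_k^{p-2} - \eta_k \mu$ and $\beta_k = \eta_k$, the recursion simplifies to
\[
r_{k+1} \;=\; r_k + \eta_k \bigl[\,\mu(r - r_k) + s\bigl(r^{p-1} - r_k^{p-1}\bigr)\bigr].
\]
This is exactly the 1D gradient step for $f$ restricted to the line $\mathrm{span}(v)$, which is reassuring. Setting $\Delta_k := r - r_k$, I can rewrite
\[
\Delta_{k+1} \;=\; \Delta_k \cdot \Bigl[\, 1 - \eta_k \mu - \eta_k s \cdot \tfrac{r^{p-1} - r_k^{p-1}}{r - r_k} \,\Bigr],
\]
with the convention that the ratio is understood as $(p-1) r^{p-2}$ in case $r_k = r$.

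Next, I prove both parts of~\eqref{LBCondition} jointly by induction on $k$. The base case $r_0 = 0 \le r$ is immediate. For the inductive step, assume $0 \le r_k \le r$. The key estimate is
\[
0 \;\leq\; \frac{r^{p-1} - r_k^{p-1}}{r - r_k} \;\leq\; (p-1) r^{p-2},
\]
which follows from the mean value theorem applied to $t \mapsto t^{p-1}$ on $[r_k, r]$: the derivative $(p-1)t^{p-2}$ is nondecreasing for $p \geq 2$ and $t \geq 0$, so it is maximized at the right endpoint. Plugging this into the bracket and using the hypothesis $\eta_k \leq 1/(L + s(p-1)r^{p-2})$ gives
\[
1 - \eta_k \mu - \eta_k s \cdot \tfrac{r^{p-1} - r_k^{p-1}}{r - r_k}
\;\geq\; 1 - \eta_k \bigl[\mu + s(p-1) r^{p-2}\bigr]
\;\geq\; 1 - \tfrac{\mu + s(p-1) r^{p-2}}{L + s(p-1) r^{p-2}}
\;=\; 1 - \bar Q^{-1}.
\]
Since $\bar Q \geq 1$, this factor lies in $[0,1]$. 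Multiplying by $\Delta_k \geq 0$ yields both $\Delta_{k+1} \geq 0$ (so $r_{k+1} \leq r$) and the contraction $\Delta_{k+1} \geq (1 - \bar Q^{-1}) \Delta_k$, closing the induction.

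There is no real obstacle here beyond identifying that one only needs an upper bound on the divided difference $(r^{p-1}-r_k^{p-1})/(r-r_k)$, which is immediate from monotonicity of the derivative of $t^{p-1}$. In particular, the argument uses nothing about $p$ other than $p \geq 2$, and the bound on $\eta_k$ is precisely the one that produces the tight rate $1 - \bar Q^{-1}$ in the recursion.
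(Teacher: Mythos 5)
Your proof is correct and follows essentially the same route as the paper's: reduce to the scalar recursion for $\Delta_k = r - r_k$ and bound the increment using the fact that $t \mapsto t^{p-1}$ has derivative at most $(p-1)r^{p-2}$ on $[0,r]$ (your mean-value/divided-difference bound is just the paper's convexity inequality $r_k^{p-1} - r^{p-1} \geq (p-1)r^{p-2}(r_k - r)$ rearranged), yielding the contraction factor $1 - \bar{Q}^{-1}$. Your explicit tracking of $0 \leq r_k \leq r$ in the induction is a slightly more careful bookkeeping of the same argument.
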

\begin{proof}
Let us assume by induction that $\Delta_k := r - r_k \geq 0$.
We have, for every $k \geq 0$:
$$
\ba{rcl}
\Delta_{k + 1} & = & 
r - r_{k + 1} 
\;\; \overset{\eqref{GradDescentParameters},\eqref{RSeq}}{=} \;\;
r - r_k
- \eta_k \bigl[ \mu(r - r_k) + s r^{p - 1} - s r_k^{p - 1} \bigr] \\[10pt]
& = &
\Delta_k - \eta_k \mu \Delta_k + \eta_k s[ r_k^{p - 1} - r^{p - 1} ].
\ea
$$
Now, using convexity of the function $x \mapsto x^{p - 1}, x \geq 0$, we get
$$
\ba{rcl}
r_k^{p - 1} - r^{p - 1} & \geq & (p - 1)r^{p - 2}(r_k - r)
\;\; = \;\; -(p - 1)r^{p - 2}\Delta_k.
\ea
$$
Therefore, we obtain the following inequality,
$$
\ba{rcl}
\Delta_{k + 1} & \geq & \Delta_k \Bigl(
1 - \eta_k \bigl[ \mu + s(p - 1) r^{p - 2} \bigr] \Bigr)
\;\; \overset{\eqref{EtaKLBCondition}}{\geq} \;\;
\Delta_k \bigl( 1 - \bar{Q}^{-1} \bigr),
\ea
$$
which gives the desired bounds from~\eqref{LBCondition} for the next iteration.
\end{proof}

\begin{remark}
Note that condition~\eqref{EtaKLBCondition}
is naturally satisfied for the step-sizes~\eqref{ConstantEtaK}, \eqref{BacktrackingEtaK}
of the gradient descent
that we analyzed in the previous section.
\end{remark}

Now, we justify our condition for the composite gradient method~\eqref{CompositeGMParameters}.
For simplicity, we use a fixed Lipschitz constant $L$ as the regularization parameter.
However, the statement can also be relaxed as in the case of gradient descent.

\begin{lemma} \label{LemmaCGMCondition}
Condition~\eqref{LBCondition} holds
for iterations of the composite gradient method~\eqref{CompositeGMParameters}.
\end{lemma}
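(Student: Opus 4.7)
The approach is to reduce the composite gradient iteration to a scalar recurrence on $r_k := \|x_k\|$ along the direction $v$, and then verify both parts of~\eqref{LBCondition} by induction. Substituting $x_k = r_k v$ and $b = (\mu r + s r^{p-1})v$ into~\eqref{CompositeNonlinearEq}, and using $\nabla q(x_k) = \bigl(\mu r_k - \mu r - s r^{p-1}\bigr) v$, the norm on the right-hand side of~\eqref{CompositeNonlinearEq} simplifies and one obtains the clean scalar recursion
\begin{equation*}
r_{k+1}\bigl(L + s r_{k+1}^{p-2}\bigr) \;=\; (L-\mu)\, r_k + \mu r + s r^{p-1}, \qquad r_0 = 0.
\end{equation*}

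The inequality $r_k \leq r$ then follows by a straightforward induction. Assuming $r_k \leq r$, the right-hand side above is at most $r(L + s r^{p-2}) = Lr + sr^{p-1}$; since the map $t \mapsto Lt + st^{p-1}$ is strictly increasing on $[0,\infty)$, this forces $r_{k+1} \leq r$, and the base case $r_0 = 0 \leq r$ is trivial.

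For the contractive inequality, I would subtract the recurrence from the fixed-point identity $r(L + s r^{p-2}) = Lr + sr^{p-1}$ to obtain
\begin{equation*}
L(r - r_{k+1}) + s\bigl(r^{p-1} - r_{k+1}^{p-1}\bigr) \;=\; (L-\mu)(r - r_k).
\end{equation*}
Convexity of $t \mapsto t^{p-1}$ for $p \geq 2$, together with $r_{k+1} \leq r$, gives $r^{p-1} - r_{k+1}^{p-1} \leq (p-1) r^{p-2}(r - r_{k+1})$, so
\begin{equation*}
(r - r_{k+1})\bigl[L + s(p-1) r^{p-2}\bigr] \;\geq\; (L-\mu)(r - r_k).
\end{equation*}
Dividing by the bracket and recognizing $1 - \bar{Q}^{-1} = \frac{L-\mu}{L + s(p-1)r^{p-2}}$ directly from~\eqref{LBQDef} yields the second half of~\eqref{LBCondition}.

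The only non-routine step is spotting the algebraic identity produced by subtracting the recurrence from its fixed-point relation; once this is in hand, a single application of the convexity of $t^{p-1}$ converts the nonlinear difference $r^{p-1} - r_{k+1}^{p-1}$ into a linear bound whose coefficient matches $\bar{Q}$ exactly. The tightness of this convexity estimate (equality as $r_k \to r$) is precisely what makes the same contraction factor $1 - \bar{Q}^{-1}$ appear for both gradient descent and the composite method, as in Lemmas~\ref{LemmaGMCondition} and~\ref{LemmaCGMCondition}.
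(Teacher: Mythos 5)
Your proof is correct and follows essentially the same route as the paper: the same scalar recurrence $L r_{k+1} + s r_{k+1}^{p-1} = (L-\mu)r_k + \mu r + s r^{p-1}$, the same use of convexity of $t \mapsto t^{p-1}$ to linearize $r^{p-1} - r_{k+1}^{p-1}$, and the same contraction factor $1-\bar{Q}^{-1}$ (your "subtract the fixed-point relation" step is exactly the paper's rearrangement of its one-step identity). Your explicit monotonicity argument for $r_k \leq r$ is a small, harmless addition that the paper leaves implicit.
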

\begin{proof}
Note that each iteration of the method gives
$$
\ba{rcl}
r_{k + 1} & \overset{\eqref{CompositeGMParameters},\eqref{RSeq}}{=} &
\frac{1}{L + s r_{k + 1}^{p - 2}} \Bigl[ 
(L - \mu) r_k + \mu r + s r^{p - 1}
\Bigr],
\ea
$$
or, rearranging the terms, we get
\beq \label{CompOneStep}
\ba{rcl}
L r_{k + 1} + s r_{k + 1}^{p - 1} & = & 
(L - \mu) r_k + \mu r + s r^{p - 1}.
\ea
\eeq
As in the proof of Lemma~\ref{LemmaGMCondition}, denoting $\Delta_{k + 1} := r - r_{k + 1}$
and employing convexity of $x \mapsto x^{p - 1}, x \geq 0$, we note the following inequality,
\beq \label{ConvDelta}
\ba{rcl}
(p - 1) r^{p - 2} \Delta_{k + 1} & \geq & r^{p - 1} - r_{k + 1}^{p - 1}.
\ea
\eeq
Therefore, we have that
$$
\ba{rcl}
L \Delta_{k + 1} + s(p - 1) r^{p - 2} \Delta_{k + 1}
& \overset{\eqref{ConvDelta}}{\geq} &
L \Delta_{k + 1} + s r^{p - 1} - s r_{k + 1}^{p - 1} 
\;\; \overset{\eqref{CompOneStep}}{=} \;\;
(L - \mu) \Delta_k.
\ea
$$
Hence, we obtain
$$
\ba{rcl}
\Delta_{k + 1} & \geq & \Delta_k \frac{L - \mu}{L + s(p - 1) r^{p - 2}} 
\;\; = \;\;
\Delta_k \bigl(1 - \bar{Q}^{-1} \bigr),
\ea
$$
which completes the proof.
\end{proof}

Note that in our construction we do not use the specific structure of other eigenvalues of $A$ except $v$, which corresponds to the smallest eigenvalue: their choice can be arbitrary. To be specific, we can fix an arbitrary $v \in \R^d$ s.t. $\|v\| = 1$ and set
$$
\ba{rcl}
A & := & (I - v v^{\top}) L,
\ea
$$
where $L > 0$ is given. The corresponding vector $b$ is chosen as before by~\eqref{BVectorChoice},
where $r > 0$ is fixed. It is easy to verify that this instance of our problem satisfy all the desired properties discussed above, in particular, we have $\| \xopt \| = r$.

We are ready to establish the lower bound
for one-step gradient methods.

\begin{theorem}\label{thm:gm_final_lb}
Let $s > 0$ and $L > 0$. For any one-step method such that~\eqref{LBCondition} holds, running on our
instance of the regularized quadratic problem from $x_0 = 0$, the functional residual of the last iteration is lower bounded by
\beq
\ba{rcl}
        f(x_N) - f_\star 
        &=& \lb \left( \Big[\frac{L}{s^{2/p} N}\Big]^{\frac{p}{p-2}} \right),
\ea
\eeq
and the gradient norm is bounded as
\beq
    \ba{rcl}
    \| \nabla f(x_N) \| 
    &=&
    \lb \left( \Big[\frac{L}{s^{1/(p - 1)} N}\Big]^{\frac{p-1}{p-2}} \right).
    \ea
\eeq
\end{theorem}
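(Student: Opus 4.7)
The plan is to start from the distance lower bound \eqref{ItersLB}, i.e.\ $\|x_N - \xopt\| \geq r \exp\bigl(-\frac{s(p-1)r^{p-2}N}{L}\bigr)$, which is already available under condition~\eqref{LBCondition} for $\mu = 0$. To turn this into a lower bound on the functional residual, I would apply the uniform convexity bound \eqref{eq:func_residual} with $\sigma_p = s 2^{2-p}$, yielding
\[
f(x_N) - f_\star \;\geq\; \frac{\sigma_p}{p} \, r^p \exp\!\Bigl(-\frac{p s(p-1)r^{p-2}N}{L}\Bigr).
\]
Since we are free to pick the construction parameter $r = \|\xopt\|$ to make the lower bound as large as possible, I would then optimize the right-hand side over $r > 0$.

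The optimization is elementary: differentiating $r \mapsto r^p \exp(-c r^{p-2})$ with $c := \frac{p s (p-1) N}{L}$ and setting the derivative to zero gives the critical point $r_\star^{p-2} = \frac{p}{c(p-2)} = \frac{L}{s(p-1)(p-2)N}$. At this choice, the exponent becomes the constant $-p/(p-2)$, so the exponential factor contributes only an absolute constant, and $r_\star^p = \bigl(\tfrac{L}{s(p-1)(p-2)N}\bigr)^{p/(p-2)}$. Substituting and collecting the $s$-powers (using $p/(p-2) - 1 = 2/(p-2)$ and the identity $s^{2/(p-2)} = (s^{2/p})^{p/(p-2)}$) produces exactly
\[
f(x_N) - f_\star \;=\; \Omega\!\left( \Bigl[\tfrac{L}{s^{2/p} N}\Bigr]^{\frac{p}{p-2}} \right),
\]
as claimed. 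One technical point to verify is that this optimal $r_\star$ is an admissible choice for the construction, i.e.\ that the induction hypothesis $r_k \leq r$ from Lemmas~\ref{LemmaGMCondition}--\ref{LemmaCGMCondition} remains consistent; this is immediate since $r_\star > 0$ is arbitrary and the construction only depends on $r$ through $b$ via \eqref{BVectorChoice}.

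For the gradient-norm bound, I would invert the uniform-convexity consequence \eqref{eq:grad_ineq}, which reads $f(x) - f_\star \leq \frac{p-1}{p}\sigma_p^{-1/(p-1)} \|\nabla f(x)\|^{p/(p-1)}$. Solving for the gradient norm gives $\|\nabla f(x_N)\| \geq \bigl(\frac{p}{p-1}\bigr)^{(p-1)/p} \sigma_p^{1/p} (f(x_N)-f_\star)^{(p-1)/p}$. Plugging in the functional residual bound just derived and simplifying the $s$-exponent via the identity $\tfrac{1}{p} - \tfrac{2(p-1)}{p(p-2)} = -\tfrac{1}{p-2}$ (which is exactly $-\tfrac{1}{p-1}\cdot\tfrac{p-1}{p-2}$) yields
\[
\|\nabla f(x_N)\| \;=\; \Omega\!\left( \Bigl[\tfrac{L}{s^{1/(p-1)} N}\Bigr]^{\frac{p-1}{p-2}} \right).
\]

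The main obstacle, if any, is bookkeeping: keeping track of the four different powers of $s$, $L$, $N$ and the constants depending on $p$ through the two conversions (distance to residual, then residual to gradient) and confirming they line up with the stated exponents. The underlying ideas (exponential contraction from Lemma~\eqref{ItersLB}, optimizing over $r$, applying the two-sided uniform-convexity inequalities) are all already in place in the excerpt; no additional construction is needed.
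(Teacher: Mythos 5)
Your proposal is correct and follows essentially the same route as the paper: start from the exponential contraction bound \eqref{ItersLB}, optimize the free parameter $r$ (the paper optimizes the distance bound and sets $r = (L/(s(p-1)N))^{1/(p-2)}$, you optimize after applying uniform convexity, which shifts the optimizer only by a constant factor), and then convert to the functional residual via \eqref{eq:func_residual} with $\sigma_p = s2^{2-p}$ and to the gradient norm via \eqref{eq:grad_ineq}. The exponent bookkeeping in $s$, $L$, $N$ checks out, so no gap.
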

\begin{proof}
Note that $r > 0$ is a free parameter of our construction, 
which means that we can maximize the lower bounds with respect to 
it. 

Let us look at the simplified version of our lower bound on the norm distance for $\mu := 0$, provided by~\eqref{ItersLB}.
We denote
$$
\ba{rcl}
\mathcal{L}(r) 
    &\coloneq& 
    r\exp\left(-\frac{s (p - 1) r^{p-2} N}{L}\right),
\ea
$$
and consider its extremum values.
The first-order optimality condition gives
$$
\ba{rcl}
\mathcal{L}'(r) & = &
\exp \left(-\frac{s(p - 1)r^{p-2}N}{L}\right)
\left[1 - s(p - 1)(p - 2) \frac{sN}{L} \, r^{p-2}\right] \;\; = \;\; 0,
\ea
$$
which, taking into account $r > 0$, implies that
the optimum value is
$$
\ba{rcl}
r^{\star} & = & \left[ \, \frac{1}{(p - 1)(p - 2)} \cdot \frac{L}{sN} \, \right]^{\frac{1}{p - 2}}
\;\; \propto \;\; \Bigl( \frac{L}{sN}\Bigr)^{\frac{1}{p-2}}.
\ea
$$
This expression suggests the right choice for the value of $r$.
Now, let us simply set
\beq \label{RChoice1}
\boxed{
\ba{rcl}
r & := & \Bigl( \frac{L}{s(p - 1)N}  \Bigr)^{\frac{1}{p - 2}}.
\ea
}
\eeq
Then, 
\beq \label{OneStepDistanceLB}
\ba{rcl}
\| x_N - \xopt \| & \overset{\eqref{ItersLB}}{\geq} &
\mathcal{L}(r)
\;\; \overset{\eqref{RChoice1}}{=} \;\;
\frac{1}{e} \Bigl( \frac{L}{s(p - 1)N}  \Bigr)^{\frac{1}{p - 2}}.
\ea
\eeq
Using uniform convexity of our objective, we obtain
\beq \label{FuncResBelow}
\ba{rcl}
f(x_N) - f_{\star}
& \overset{\eqref{eq:func_residual}, \eqref{eq:sigma_value}}{\geq} &
\frac{s}{p 2^{p - 2}} \|x_N - \xopt\|^p
\;\; \overset{\eqref{OneStepDistanceLB}}{\geq} \;\;
\frac{s}{p 2^{p - 2} e^p}
\Bigl( \frac{L}{s(p - 1)N}  \Bigr)^{\frac{p}{p - 2}}
\ea
\eeq
and
$$
\ba{rcl}
\| \nabla f(x_N) \|^{\frac{p}{p - 1}}
& \overset{\eqref{eq:grad_ineq}, \eqref{eq:sigma_value}}{\geq} &
\frac{p}{p - 1}
(s 2^{2 - p})^{\frac{1}{p - 1}}
( f(x_N) - f_{\star} )
\;\; \overset{\eqref{FuncResBelow}}{\geq} \;\;
\frac{(s 2^{2 - p})^{\frac{p}{p - 1}}}{(p - 1) e^p} 
\Bigl( \frac{L}{s(p - 1)N}  \Bigr)^{\frac{p}{p - 2}},
\ea
$$
which completes the proof.
\end{proof}

\section{Lower Bounds for General First-Order Methods} \label{sec:lower_bounds}

\subsection{Trajectories of First-order Methods}

In this section, let us study the trajectory
of any first-order method
when applied to our regularized quadratic problem~\eqref{eq:prob}.
We start with a formal notion of the \textit{first-order black-box oracle} $\mathcal{I}_f$, 
originating from \cite{nemirovski_problem_1983}.
For any given point, the oracle returns local information
about the objective function of the first order, which is 
the function value and the gradient vector:
$$
\ba{rcl}
    \mathcal{I}_f(x) & \coloneqq & 
    \big\{f(x), \nabla f(x)\big\},
    \qquad x \in \R^d.
\ea
$$
Then, any
first-order method $\mathcal{M}$
can be uniquely characterized by
a sequence of mappings:
$
\mathcal{M} =
(\mathcal{M}_0, \mathcal{M}_1, \mathcal{M}_2, \ldots).
$
Each mapping $\mathcal{M}_k$ defines the next point $x_{k + 1}$
of the algorithm, 
given all information known prior to the current iteration:
$$
\ba{rcl}
x_{k+1} & \coloneq & 
\mathcal{M}_k\big( \mathcal{I}_f(x_0), \, \mathcal{I}_f(x_1), 
\, \ldots, \, \mathcal{I}_f(x_k)\big),
\qquad k \geq 0.
\ea
$$
We assume that $x_0 = 0$, and that the number of iterations $N \geq 1$
is fixed. Then, without loss of generality, we always assume that the last iterate $x_N$ is 
\textit{the result of the method} when applied to a particular objective $f$.
For a fixed method $\mathcal{M}$,
we will design a resisting objective function from our class~\eqref{eq:prob},
which will provide us with the lower bound for the performance of any
first-order method.

Let us introduce the following matrix:
\begin{equation} \label{AStarDef}
\ba{rcl}
    \Ar & \coloneq & A + s\|\xopt\|^{p-2}I,
\ea
\end{equation}
where $\xopt$ is the unique solution to our problem~\eqref{eq:prob}.
Note that the first-order stationarity condition provides us 
with the following expression for the solution:
\beq \label{XStarExpr}
\ba{rcl}
\xopt & = & \Ar^{-1} b.
\ea
\eeq

An object that will be central to our analysis is the notion of the \textit{$i$-th order Krylov subspace} $\kryl_i(f)$, where $i \geq 0$, which is associated with regularized quadratic functions of the form~\eqref{eq:prob}. We define:
\begin{equation} \label{KrylovDef}
\ba{rcl}
    \kryl_i(f) & \coloneq & \spn\{b, 
    \, \Ar b, \, \Ar^2 b, \, \ldots, \, \Ar^{i-1} b\}.
\ea
\end{equation}
Due to the structure of the matrix $\Ar$, it is easy to see that $\kryl_i(f)$
are equal to the standard Krylov subspaces associated with
matrix $A$ and vector $b$, \emph{i.e.}, we have
$\kryl_i(f)  =  \spn\{b, A b, A^2 b, \ldots, A^{i-1} b\}$.

Note that the following inclusion clearly holds: 
$$
\ba{rcl}
\kryl_i(f) & \subseteq & \kryl_{i + 1}(f), \qquad i \geq 0.
\ea
$$
Moreover, if one of these
inclusions is an equality (i.e., $\kryl_i(f) = \kryl_{i + 1}(f)$), 
then so are the following.
This implies that the preceding inclusions are strict until
we arrive at a subspace invariant to further applications of $\Ar$.
Hence, we obtain the following chain:
$$
\ba{rcl}
\kryl_0(f) & \subset &
\kryl_1(f) \;\; \subset \;\;
\;\; \ldots \;\; \subset \;\;
\kryl_{\ell}(f) \;\; = \;\; \kryl_{\ell + 1}(f) 
\;\; = \;\;
\ldots
\ea
$$
The dimension of the subspace at index $\ell$ where the following Krylov subspaces are equal is commonly referred to as the \textit{maximal dimension} of the sequence of Krylov subspaces. 

The classic Cayley-Hamilton theorem \cite{hoffman_linear_1971}, implies that $\xopt = \Ar^{-1} b$ can be represented in a polynomial form: $\xopt = c_0b + c_1\Ar b + \ldots + c_{d - 1} \Ar^{d-1}b$, for some sequence of real coefficients $c_0, \ldots, c_{d-1} \in \R$.
Therefore, 
\beq \label{XoptKrylov}
\ba{rcl}
\xopt & = & \Ar^{-1} b \in \kryl_{\ell}(f).
\ea
\eeq

We will construct an instance of problem~\eqref{eq:prob},
using the spectral representation 
\beq \label{SpecDec}
\ba{rcl}
A & = & U \Lambda U^{\top},
\ea
\eeq
where $\Lambda = \Diag(\lambda_1, \ldots, \lambda_d)$, and $U$ is an orthogonal matrix:
$UU^{\top} = I$.
The matrix $\Lambda$
is our \textit{fixed parameter}, while the matrix $U$ will be constructed adversarially to ensure that the trajectory of the method lies in the Krylov subspaces.
This construction is classical, and it was developed initially for quadratic problems in the works of
Nemirovski~\cite{arkadi_nemirovski_information-based_1994}, and adapted for analysis on functions of form~\eqref{eq:prob} in an unpublished master's thesis~\cite{master_thesis}.
For the sake of completeness, we provide proofs of our statements in what follows. 

For a fixed arbitrary $r > 0$ and a vector $\pi \in \Delta_d$ 
from the standard simplex,
we set
\beq \label{BChoice}
\ba{rcl}
b & := & r( \Lambda + s r^{p - 2} I ) \sqrt{\pi} \;\; \in \;\; \R^d,
\ea
\eeq
where $\sqrt{\pi} \in \R^d$ is the vector whose
entries are square roots of the corresponding entries of $\pi$: $[ \sqrt{\pi} \, ]_i \equiv \sqrt{\pi_i}$.
This choice of $b$ is justified by the following simple lemma.

\begin{lemma} \label{LemmaXOpt}
For any $\lambda_1, \ldots, \lambda_d \geq 0$, $r > 0$, and $\pi \in \Delta_d$,
let $\Lambda := \Diag(\lambda_1, \ldots, \lambda_d)$, and
$b \in \R^d$ be chosen according to \eqref{BChoice}.
Then, for an arbitrary orthogonal $b$-invariant matrix $U$,
we have that the solution $\xopt$ to the corresponding
regularized quadratic problem is
\beq \label{Xopt}
\ba{rcl}
\xopt & = & rU \sqrt{\pi},
\ea
\eeq
and, consequently,
\beq \label{XoptR}
\ba{rcl}
\| \xopt \| & = & r.
\ea
\eeq
\end{lemma}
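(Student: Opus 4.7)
The plan is to verify directly that the candidate point $x := rU\sqrt{\pi}$ satisfies the stationary condition \eqref{StatCondition} for the regularized quadratic, and then invoke uniqueness of the minimizer (which is available because $f$ is uniformly convex by \eqref{eq:uniform_ineq}--\eqref{eq:sigma_value}, so the problem has a unique solution characterized by its first-order condition).

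First I would compute the norm of the candidate. Since $U$ is orthogonal and $\pi \in \Delta_d$, we have
\[
\|x\|^2 \;=\; r^2 \sqrt{\pi}^{\top} U^{\top} U \sqrt{\pi} \;=\; r^2 \sum_{i=1}^d \pi_i \;=\; r^2,
\]
which immediately gives \eqref{XoptR} and simplifies the regularizer contribution to $s\|x\|^{p-2} x = s r^{p-2} x$.

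Next I would plug $x$ into the left-hand side of \eqref{StatCondition}. Using $A = U\Lambda U^{\top}$ and $U^{\top}U = I$, the quadratic term collapses nicely:
\[
Ax + s\|x\|^{p-2} x \;=\; r\,U\Lambda U^{\top} U \sqrt{\pi} + sr^{p-1}\, U\sqrt{\pi} \;=\; rU\bigl(\Lambda + sr^{p-2} I\bigr)\sqrt{\pi}.
\]
Comparing with the definition \eqref{BChoice} of $b$, the right-hand side we need to match is $b = r(\Lambda + sr^{p-2}I)\sqrt{\pi}$, so the stationary condition reduces to the identity $U b = b$, which is precisely the $b$-invariance assumption on $U$.

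Thus $x = rU\sqrt{\pi}$ solves \eqref{StatCondition}. Since the strictly convex (indeed uniformly convex) objective admits a unique minimizer characterized by this equation, we conclude $\xopt = rU\sqrt{\pi}$, which is \eqref{Xopt}, and the previously computed norm yields \eqref{XoptR}. The only subtlety is the role of the $b$-invariance hypothesis: without it, applying $U$ to $(\Lambda + sr^{p-2}I)\sqrt{\pi}$ would not reproduce $b$. This is the hinge of the argument and explains why \eqref{BChoice} is set up with the extra factor $(\Lambda + sr^{p-2}I)$ rather than just $\sqrt{\pi}$.
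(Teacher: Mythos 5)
Your proposal is correct and follows essentially the same route as the paper: both verify the stationarity condition \eqref{StatCondition} for the point $rU\sqrt{\pi}$ (whose norm is $r$ by orthogonality of $U$ and $\pi \in \Delta_d$) and then invoke uniqueness of the minimizer from uniform convexity; the paper merely runs the computation in the reverse direction, starting from $\bar{x} = (A + sr^{p-2}I)^{-1}b$ and using $U^{\top}b = b$, while you start from the candidate point and use $Ub = b$. The two arguments are equivalent.
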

\begin{proof}
Using the first-order stationarity condition
and uniform convexity, we know that the solution $\xopt$
to~\eqref{eq:prob}
is the unique solution of the following nonlinear equation,
\beq \label{StatCond}
\ba{rcl}
A \xopt + s\|\xopt\|^{p - 2} \xopt
& = & b.
\ea
\eeq
Now, let us consider the vector 
$$
\ba{rcl}
\bar{x} & := & (A + sr^{p - 2} I)^{-1} b
\;\; = \;\; U( \Lambda + sr^{p - 2} I  )^{-1}U^{\top}b \\
\\
& \overset{(*)}{=} & U( \Lambda + sr^{p - 2} I  )^{-1}b 
\;\; \overset{\eqref{BChoice}}{=} \;\; 
r U \sqrt{\pi},
\ea
$$
where we used in $(*)$ that $U$ is $b$-invariant, thus
$U^{\top}b = b$.
Hence, $\| \bar{x} \|^2 = r^2 \| \sqrt{\pi} \|^2 = r^2$,
and we conclude that $\xopt = \bar{x}$,
which completes the proof.
\end{proof}

\begin{corollary}
    For any choice of the parameter $r > 0$,
    there exists a regularized quadratic
    function from our family, such that
    $\| \xopt \| = r$.
\end{corollary}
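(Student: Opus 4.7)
The plan is to observe that this corollary is essentially a direct restatement of Lemma~\ref{LemmaXOpt}: that lemma, for any $r > 0$, explicitly constructs a function in the family~\eqref{eq:prob} whose solution has norm exactly $r$. So I would simply instantiate the construction of the lemma with arbitrarily chosen remaining parameters.

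Concretely, I would first fix any admissible spectrum $\lambda_1, \ldots, \lambda_d \geq 0$ (for example, $\lambda_1 = \cdots = \lambda_d = L$, or any spectrum consistent with the desired eigenvalue range) and set $\Lambda := \Diag(\lambda_1, \ldots, \lambda_d)$. Next I would pick any $\pi \in \Delta_d$, for instance the uniform distribution $\pi_i = 1/d$, and define $b := r(\Lambda + s r^{p-2} I)\sqrt{\pi}$ as in~\eqref{BChoice}. The only remaining ingredient is an orthogonal $b$-invariant matrix $U$; the identity matrix $U = I$ trivially satisfies $Ub = b$, so such a matrix always exists. Setting $A := U \Lambda U^{\top}$ then yields a function $f$ of the form~\eqref{eq:prob}, and Lemma~\ref{LemmaXOpt} applied to this instance immediately gives $\xopt = r U \sqrt{\pi}$ with $\|\xopt\| = r$.

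There is no real obstacle here: the parameter $r > 0$ enters the definition of $b$ as a free scaling, and uniform convexity guarantees that the solution is unique, so the value $\|\xopt\| = r$ supplied by~\eqref{XoptR} is exactly what the corollary asks for. The brief proof therefore reduces to exhibiting the choices above and citing Lemma~\ref{LemmaXOpt}.
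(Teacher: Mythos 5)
Your proof is correct and matches the paper's approach: the corollary is stated as an immediate consequence of Lemma~\ref{LemmaXOpt}, obtained by instantiating the construction~\eqref{BChoice} with any admissible spectrum, any $\pi \in \Delta_d$, and any orthogonal $b$-invariant $U$ (e.g.\ $U = I$). Your concrete instantiation and citation of~\eqref{Xopt}--\eqref{XoptR} is exactly what the paper intends.
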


Under suitable conditions, our choice of $b$ implies that the associated Krylov subspaces are strictly nested in one another until the Krylov subspace is of the $d$-th order
(thus covering the whole space). We formalize this notion in the following lemma:
\begin{lemma}\label{LemmaKrylovStrict}
    Let $\lambda_d > \ldots > \lambda_2 > \lambda_1 \geq 0$. Then, for $A = \Diag(\lambda_1, \ldots, \lambda_d)$ and $b$ given by~\eqref{BChoice}, we have
\beq \label{KrylInclusion}
\ba{rcl}
\kryl_0(f) & \subset &
\kryl_1(f) \;\; \subset \;\;
\;\; \ldots \;\; \subset \;\;
\kryl_{\ell}(f) \;\; = \;\; \kryl_{\ell + 1}(f) 
\;\; = \;\;
\ldots
\ea
\eeq
and
\beq \label{XoptInclusion}
\ba{rcl}
\xopt & \in & \kryl_{\ell}(f),
\ea
\eeq
where $\ell$ is the number of non-zero components in the vector $b$.
\end{lemma}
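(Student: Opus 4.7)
The plan is to exploit the fact that $A$ is diagonal to reduce everything to a coordinate-wise analysis, and then use a Vandermonde argument for linear independence.

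First, I would identify the coordinate support. Since the $i$-th entry of $b$ in~\eqref{BChoice} equals $r(\lambda_i + sr^{p-2})\sqrt{\pi_i}$ and the factor $\lambda_i + sr^{p-2}$ is strictly positive (as $s, r > 0$ and $\lambda_i \geq 0$), we have $b_i = 0$ if and only if $\pi_i = 0$. Let $S \subseteq \{1, \ldots, d\}$ be the set of indices where $b_i \neq 0$, so $|S| = \ell$. Because $A$ is diagonal, $(A^k b)_i = \lambda_i^k b_i$, so every power $A^k b$ is supported in $S$. Hence every Krylov subspace is contained in the coordinate subspace $V := \spn\{e_i : i \in S\}$, whose dimension is $\ell$. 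In particular, $\dim \kryl_i(f) \leq \ell$ for all $i$.

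Next, I would show that $b, Ab, \ldots, A^{\ell-1}b$ are linearly independent. Restricting these $\ell$ vectors to the coordinates in $S$ yields an $\ell \times \ell$ matrix whose entry in row $i \in S$ and column $k \in \{0, \ldots, \ell-1\}$ equals $\lambda_i^k b_i$. Factoring the nonzero scalar $b_i$ out of each row gives the Vandermonde matrix $[\lambda_i^k]_{i \in S, \, 0 \leq k \leq \ell - 1}$. Since the $\lambda_i$'s are pairwise distinct by the strict ordering hypothesis $\lambda_d > \ldots > \lambda_1 \geq 0$, this Vandermonde matrix is nonsingular, and so $b, Ab, \ldots, A^{\ell-1}b$ are linearly independent in $\R^d$.

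From these two facts I conclude $\dim \kryl_i(f) = i$ for $0 \leq i \leq \ell$, which forces the strict inclusions $\kryl_0(f) \subset \kryl_1(f) \subset \cdots \subset \kryl_\ell(f)$ in~\eqref{KrylInclusion}. Combined with $\dim \kryl_\ell(f) \leq \dim V = \ell$, we in fact have $\kryl_\ell(f) = V$. The stabilization $\kryl_\ell(f) = \kryl_{\ell+1}(f) = \cdots$ then follows from the standard Krylov fact already noted in the excerpt (once two consecutive subspaces agree, all further ones do). Finally, for~\eqref{XoptInclusion}: here $U = I$, so by Lemma~\ref{LemmaXOpt} the solution is $\xopt = r\sqrt{\pi}$, whose $i$-th coordinate is zero whenever $\pi_i = 0$, i.e., whenever $i \notin S$. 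Therefore $\xopt \in V = \kryl_\ell(f)$.

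The only nontrivial step is the Vandermonde argument, but it is routine once one spots that the diagonal structure of $A$ makes the whole problem block into independent coordinate lines; everything else is bookkeeping about supports and dimensions.
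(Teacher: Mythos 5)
Your proposal is correct and takes essentially the same route as the paper: in both arguments the diagonal structure of $A$ reduces everything to a Vandermonde computation, with the rank of the Krylov system equal to the number of nonzero entries of $b$ because the eigenvalues are distinct, which yields the strict nesting up to order $\ell$ and stabilization afterwards. The only minor difference is the inclusion $\xopt \in \kryl_{\ell}(f)$: you read it off from the explicit solution $\xopt = r\sqrt{\pi}$, supported on the same coordinates as $b$ (using $s, r > 0$), whereas the paper obtains it from the Cayley--Hamilton representation $\xopt = \Ar^{-1} b \in \kryl_{\ell}(f)$; both arguments are valid.
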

\begin{proof}
    Consider the following \textit{Krylov matrix} associated with a $d$-th order Krylov subspace:
    $$
    \ba{rcl}
    K_{d}(f) & = & \Diag(b_1, \ldots, b_d) V \;\; \in \;\; \R^{d \times d},
    \ea
    $$
    where $b_i = r \sqrt{\pi_i} (\lambda_i + sr^{p-2})$ are the entries of the vector $b$, and $V$ is the Vandermonde matrix:
    $$
    \ba{rcl}
        V & = & \begin{pmatrix}
            1 & \lambda_1 & \cdots & \lambda_1^{d-1} \\
            1 & \lambda_2 & \cdots & \lambda_2^{d-1} \\
            \vdots & \vdots & \ddots & \vdots \\
            1 & \lambda_d & \cdots & \lambda_d^{d-1}
        \end{pmatrix}.
    \ea
    $$
    It is known that the Vandermonde matrix has full rank if $\lambda_i \not= \lambda_j, \forall i, j$. 
    Clearly, $\Diag(b_1, \ldots, b_d)$ has rank equal to the number of nonzero components of $b$. Consequently, the rank of $K_d(f)$ is equal to the rank of $\Diag(b_1, \ldots, b_d)$, which we denote by $\ell$. 
    The $d$-th order Krylov subspace is the column space of $K_d(f)$, and as a result, we have $\dim \kryl_d(f) = \ell$. The whole chain in~\eqref{KrylInclusion} now follows from the fact that
    the dimension of every next Krylov subspace can be increased no more than by one, and $\dim \kryl_0(f) = 0$.
    Therefore, all Krylov subspaces of lower orders are strictly nested within $\kryl_{\ell}(f)$, and the higher order subspaces are all equal.
\end{proof}

\begin{remark}
Note that Lemma~\ref{LemmaKrylovStrict} extends to any matrix $A = 
U \Lambda U^{\top}$ with
orthogonal $b$-invariant matrix $U$,
due to the simple observation:
$$
\ba{rcl}
\spn\{b, \, U\Lambda U^\top b, \, \ldots, \, (U\Lambda U^\top)^k b\} 
& = & U \spn\{b, \, \Lambda b, \, \ldots, \, \Lambda^k b\}.
\ea
$$
Therefore, under the same conditions, 
we have the strict inclusion of the Krylov subspaces
for any regularized quadratic, whose matrix $A$ is of the form in~\eqref{SpecDec}
with $b$-invariant $U$.
\end{remark}

The following lemma is the main tool of analyzing
the trajectory of any first-order method
when applied to our regularized quadratic function.

\begin{lemma}\label{lem:kryltraj}
Let parameters $p \geq 2$, $s \geq 0$,
$\lambda_d > \ldots > \lambda_1 \geq 0$, $r > 0$, and $\pi \in \Delta_d$  
be fixed.
Let $\ell$ be the maximal dimension of the
sequence of Krylov subspaces.
For any method $\mathcal{M}$ that
performs $N \leq \frac{\ell}{2}$ iterations,
there exists a regularized quadratic function
from our family,
such that the full trajectory
$\{x_k\}_{k = 0}^N$ of the method when applied 
to $f$ belongs to the Krylov subspaces:
\beq \label{KrylIterates}
\ba{rcl}
x_k & \in & \kryl_{2k}(f), \qquad 0 \leq k \leq N. 
\ea
\eeq
\end{lemma}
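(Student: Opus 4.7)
The plan is to use a resisting-oracle argument of Nemirovski type, constructing the orthogonal matrix $U$ in the spectral decomposition $A = U\Lambda U^\top$ adversarially as the method $\mathcal{M}$ runs, so that the resulting function $f = f_{A,b}$ (with $b$ chosen as in~\eqref{BChoice}) forces the trajectory into the successive Krylov subspaces.

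I would proceed by induction on $k$, with the invariant $x_j \in \kryl_{2j}(f)$ for all $j \leq k$. The base case is trivial, $x_0 = 0 \in \kryl_0(f) = \{0\}$. For the inductive step, the first observation is that the invariant automatically propagates to gradients: if $x_j \in \kryl_{2j}(f)$, then
$$
\nabla f(x_j) \;=\; A x_j + s\|x_j\|^{p-2} x_j - b \;\in\; \kryl_{2j+1}(f),
$$
since $A$ maps $\kryl_{2j}(f)$ into $\kryl_{2j+1}(f)$, the regularizer term is a scalar multiple of $x_j$, and $b \in \kryl_1(f)$. Consequently, every piece of information that $\mathcal{M}$ has seen through iteration $k$ lies in $\kryl_{2k+1}(f)$.

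The crux is the next iterate $x_{k+1}$, which is an arbitrary deterministic function of the oracle history and need not stay in $\kryl_{2k+1}(f)$. Here we use the remaining freedom in $U$: up to stage $k$ the oracle responses depend only on the action of $A$ on $\kryl_{2k+1}(f)$, so only the first $2k+1$ directions of $U$ have been committed, leaving $d - (2k+1)$ directions free. We extend $U$ so that the new Krylov direction $A^{2k+1}b$, which together with $\kryl_{2k+1}(f)$ spans $\kryl_{2(k+1)}(f)$, is aligned with the component of $x_{k+1}$ orthogonal to $\kryl_{2k+1}(f)$; then $x_{k+1} \in \kryl_{2(k+1)}(f)$ as required. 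The hypothesis $N \leq \ell/2$ guarantees that for all $k \leq N$ we have $2(k+1) \leq \ell$, so by Lemma~\ref{LemmaKrylovStrict} the chain of strict inclusions is long enough to supply a fresh direction at every step, and the extension process never runs out of room.

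The main obstacle is the careful bookkeeping needed to verify that each extension of $U$ remains orthogonal, preserves the $b$-invariance condition required by Lemma~\ref{LemmaXOpt}, and is consistent with all oracle responses already returned (which, because they are determined entirely by the restriction of $A$ to $\kryl_{2k+1}(f)$, are unaffected by how we extend $U$ on the orthogonal complement). The factor of $2$ between the iteration count and the Krylov order reflects the fact that each iteration contributes two new informational directions to the method (the query $x_k$ and its gradient $\nabla f(x_k)$), and each must be absorbed into the growing Krylov subspace. This is precisely the adaptation of Nemirovski's classical quadratic lower-bound construction~\cite{arkadi_nemirovski_information-based_1994} to our regularized setting carried out in~\cite{master_thesis}.
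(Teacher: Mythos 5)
Your proposal is correct and follows essentially the same route as the paper: a Nemirovski-style resisting oracle that rotates the unfixed part of $U$ so the component of $x_{k+1}$ orthogonal to $\kryl_{2k+1}(f)$ becomes the next Krylov direction, with oracle answers unchanged because they only involve the action of $A$ on the already-fixed subspace. The paper carries out the bookkeeping you defer by constructing a sequence of informationally indistinguishable functions $f_k$ with $U_{k+1}=HU_k$, where $H$ is an explicit Householder reflection fixing $\kryl_{2k+1}(f_k)$ (hence preserving orthogonality and $b$-invariance), which is exactly the extension step you describe.
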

\begin{proof}
We construct a sequence of functions $f_0, \ldots, f_N$,
each of the form
\beq \label{FkDef}
\ba{rcl}
f_k(x) & := & \frac{1}{2} x^{\top} A_k x - b^{\top} x + \frac{s}{p}\|x\|^p,
\ea
\eeq
where $A_k = U_k \Lambda U_k^{\top}$,
and $\{ U_k \}_{k \geq 0}$ is a sequence of orthogonal $b$-invariant matrices,
generated in an adversarial way in response to the iterations of the method.
We start with $U_0 := I$.
Let us denote by $\xopt^{(k)}$ the solution to \eqref{FkDef},
which according to Lemma~\ref{LemmaXOpt} has the following explicit representation:
$$
\ba{rcl}
\xopt^{(k)} & = & r U_k \sqrt{\pi}.
\ea
$$
Thus, all $\xopt^{(k)}$ have the same length: $\| \xopt^{(k)} \| = r$.

We are going to ensure the following key property: any pair of
functions $f_k$ and $f_{\ell}$
for $\ell > k$ are
\textit{informationally indistinguishable}
by the first-order oracle
during the first $k$ iterations of the method, \emph{i.e.,}
\beq \label{OracleEq}
\ba{rcl}
f_k(x_i) & = & f_{\ell}(x_i) 
\quad \text{and} \quad 
\nabla f_k(x_i) \;\; = \;\; \nabla f_{\ell}(x_i),
\qquad 0 \leq i \leq k,
\ea
\eeq
where $\{ x_i \}_{i = 0}^k$ are iterations of $\mathcal{M}$
when applied to the function $f_k$.
In other words, the method cannot recognize whether it runs on $f_k$
or on $f_{\ell}$, $\ell > k$, when performing the first $k$ steps.

Let us verify by induction the following statement:
for every $0 \leq k \leq N$, we have
\beq \label{KrylovProperty}
\ba{rcl}
x_0, \ldots, x_k & \in & \kryl_{2k}(f_k).
\ea
\eeq
Then, we will choose $f \equiv f_N$, and \eqref{KrylIterates}
will follow immediately.

Inclusion~\eqref{KrylovProperty} trivially holds for $k = 0$,
since we always choose $x_0 := 0$, and, by definition, $\kryl_0 \equiv \{ 0 \}$.
Now, assume that~\eqref{KrylovProperty} holds for some $k \geq 0$,
and consider one step of our resisting strategy.

Due to our assumption that $k + 1 \leq N \leq \ell / 2$, by Lemma~\ref{LemmaKrylovStrict} we have a strictly increasing sub-chain
of Krylov subspaces:
$$
\ba{rcl}
\kryl_{2k}(f_k) 
& \subset & 
\kryl_{2k + 1}(f_k)
\;\; \subset \;\;
\kryl_{2k + 2}(f_k).
\ea
$$
Let us build an orthogonal matrix $H$ such that 
it is identical on $\kryl_{2k + 1}(f_k)$:
$H|_{\kryl_{2k + 1}(f_k)} \equiv I$,
and $x_{k + 1} \in H \kryl_{2k + 2}(f_k)$.
We can construct such a matrix explicitly, as follows.
First, we choose an orthonormal basis
$v_1, \ldots, v_{2k + 1} \in \R^d$ in $\kryl_{2k + 1}(f_k)$
and complement it by a vector $v_{2k + 2} \in \R^d$
to have an orthonormal basis in $\kryl_{2k + 2}(f_k)$.
Consider the corresponding projector onto $\kryl_{2k + 1}(f_k)$:
$$
\ba{rcl}
\Pi & := & \sum\limits_{i = 1}^{2k + 1} v_i v_i^{\top}.
\ea
$$
Then, the vector $x_{k + 1}$ can be decomposed into a direct sum
of two vectors:
$
x_{k + 1} = \Pi x_{k + 1} + (I - \Pi) x_{k + 1}.
$
Let $H \in \R^{d \times d}$ be a Householder reflection
that transforms the vector $v_{2k + 2}$ 
into the vector $y := \frac{(I - \Pi) x_{k + 1}}{\| (I - \Pi) x_{k + 1}\|}$,
defined by the following formula:
$$
\ba{rcl}
H & := & I - 2 w w^{\top}, \qquad \text{where} \qquad
w \;\; := \;\; \frac{v_{2k + 2} - y}{\|v_{2k + 2} - y \|}.
\ea
$$
It is immediate to check that such $H$ satisfies the required properties,
and we set $U_{k + 1} := H U_k$.

Let us verify that $f_{k}$ and $f_{k + 1}$
remain informationally indistinguishable
for the first $k$ iterates of the method.
Indeed, we have $A_{k + 1} = H A_k H^\top$,
and due to our inductive assumption~\eqref{KrylovProperty},
$$
\ba{rcl}
A_{k + 1} x_i & \equiv & A_k x_i, \qquad 0 \leq i \leq k.
\ea
$$
Therefore, \eqref{OracleEq} is satisfied.
It remains to check~\eqref{KrylovProperty},
which follows from the following representation:
$$
\ba{rcl}
\kryl_{2(k + 1)}(f_{k + 1}) & = &
H \kryl_{2(k + 1)}(f_{k}).
\ea
$$
This completes the proof.
\end{proof}

As a consequence,
we can represent the result of the method,
when applied to our constructed function~$f$ in the following 
form:
\begin{equation}\label{eq:poly_rep}
    x_N = q(\Ar) b,
\end{equation}
where $q \in \mathcal{P}_{2N}$ is a $2N$-degree matrix polynomial.
This reduction allows us to state the following proposition,
which is the main result used in the next section.

\begin{proposition}\label{prop:poly}
    Let $\mathcal{M}$ be a first-order method that performs $N$ iterations.
    Let $\Lambda = \Diag(\lambda_1, \ldots, \lambda_d)$, 
    where $\lambda_d > \ldots > \lambda_2 > \lambda_1 \geq 0$, 
    and $b$ be given by~\eqref{BChoice}, such that $\pi \in \Delta_d$. 
    Then, for any $p \geq 2$ and $s \geq 0$, there exists a regularized convex 
    quadratic function $f$ of the form~\eqref{eq:prob} with the given parameters, 
    such that the distance between the solution $\xopt$ to $f$ and the result 
    of the method $x_N$, when applied to $f$, is bounded as:
    \begin{equation} \label{SolutionGap}
    \ba{rcl}
        \|x_N - \xopt\| & \geq &
        \min\limits_{q \in \mathcal{P}_{2N}} \|q(\Ar)b - \xopt\|.
    \ea
    \end{equation}
\end{proposition}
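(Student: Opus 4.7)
The proposition is essentially a direct corollary of Lemma~\ref{lem:kryltraj}, reformulated in the language of polynomial approximation. My plan is to apply the lemma to produce a resisting function~$f$ for which the iterates stay in the Krylov subspaces, then translate the inclusion $x_N \in \kryl_{2N}(f)$ into the polynomial representation~\eqref{eq:poly_rep}, and finally bound from below by the optimal polynomial.

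More concretely, the plan proceeds in three short steps. First, I would split into cases based on the size of $N$ relative to the maximal Krylov dimension $\ell$ produced by the chosen $\Lambda$ and $b$. In the nontrivial regime $N \leq \ell/2$, I invoke Lemma~\ref{lem:kryltraj} with the given $\Lambda$, $r$, and $\pi$, obtaining a regularized quadratic $f$ of the form~\eqref{eq:prob} such that $x_N \in \kryl_{2N}(f)$. By the very definition~\eqref{KrylovDef} of the Krylov subspace, every element of $\kryl_{2N}(f)$ is a linear combination of $b, \Ar b, \ldots, \Ar^{2N-1}b$, hence of the form $q(\Ar) b$ for some $q \in \mathcal{P}_{2N-1} \subseteq \mathcal{P}_{2N}$.

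Second, taking such a $q$ realized by the method, I write
\begin{equation*}
\| x_N - \xopt \| \;=\; \| q(\Ar) b - \xopt \| \;\geq\; \min_{\tilde{q} \in \mathcal{P}_{2N}} \| \tilde{q}(\Ar) b - \xopt \|,
\end{equation*}
which is exactly~\eqref{SolutionGap}. In the remaining case $N > \ell/2$, the right-hand side of~\eqref{SolutionGap} is in fact zero: by the Cayley--Hamilton argument leading to~\eqref{XoptKrylov}, $\xopt = \Ar^{-1}b \in \kryl_{\ell}(f) \subseteq \kryl_{2N}(f)$, so there exists $q \in \mathcal{P}_{2N}$ with $q(\Ar)b = \xopt$; then any $f$ from the family (e.g.\ $U = I$) works and the inequality is trivial.

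Since the heavy lifting, including the adversarial construction of the orthogonal $b$-invariant matrices and verification of informational indistinguishability, is already contained in Lemma~\ref{lem:kryltraj}, there is no real obstacle here: the only point requiring a small amount of care is the bookkeeping of degrees (the Krylov subspace $\kryl_{2N}(f)$ corresponds to polynomials of degree at most $2N-1$, which comfortably sits inside $\mathcal{P}_{2N}$), and the observation that the statement remains (vacuously) valid outside the range $N \leq \ell/2$ covered by the lemma.
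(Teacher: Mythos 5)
Your proposal is correct and follows essentially the same route as the paper's own proof: invoke Lemma~\ref{lem:kryltraj} to place $x_N$ in $\kryl_{2N}(f)$ when $2N \leq \ell$, rewrite $x_N$ as $q(\Ar)b$ and minimize over $\mathcal{P}_{2N}$, and observe that the bound is trivially zero when $2N > \ell$ since $\xopt \in \kryl_{\ell}(f)$. Your remark that the Krylov subspace only needs polynomials of degree at most $2N-1 \subseteq \mathcal{P}_{2N}$ is a harmless (and slightly more careful) piece of bookkeeping than the paper's phrasing.
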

\begin{proof}
    We divide the proof of this proposition into two cases. 
    Let us first consider the case when $2N \leq \ell$, where $\ell$ is the the maximal dimension 
    of the sequence of Krylov subspaces. Then we use Lemma~\ref{lem:kryltraj} to create an instance of a regularized quadratic function $f$ such that $x_N \in \kryl_{2N} (f)$. This allows us to write $x_N = q(\Ar)b$, for some $q \in \mathcal{P}_{2N}$. For this specific polynomial, we have
    \begin{equation*}
    \ba{rcl}
        \|x_N - \xopt\| & = & \|q(\Ar)b - \xopt\|,
    \ea
    \end{equation*}
    and the statement follows from minimizing the right-hand side over the space of $2N$-degree polynomials.

    Let us now consider the case $2N > \ell$. We already know that $\xopt \in \kryl_{\ell}(f)$ by~\eqref{XoptKrylov}. We can therefore select a $2N$-degree polynomial $q$ such that $\xopt = q(\Ar) b$, and~\eqref{SolutionGap} becomes the trivial lower bound:
    $$
    \ba{rcl}
        \|x_N - \xopt \| \;\; \geq \;\; \| q(\Ar) b - \xopt \| & \equiv & 0,
    \ea
    $$
    which completes the proof.
\end{proof}

\subsection{Complexity Lower Bound}
In this subsection, we use the fact that Lemma~\ref{LemmaXOpt} leaves us a degree of freedom to choose the norm of the solution $\| \xopt \|$ for the construction. We assume that $\| \xopt \| > 0$, since we always start at $x_0 = 0$, and therefore if $\xopt = 0$
the solution would be found in $0$ iterations.

In order to prove our lower bound,
we will also need the following technical result, which
characterizes the solution to the problem of the best uniform polynomial approximation.
We restate it here without proof.
\begin{lemma}\label{lem:min_max_sol}
        Let $n \geq 1$, $0 < a \leq b$ and $c = b / a$. Then
        \beq \label{eq:approx_solution}
        \ba{rcl}
            \min\limits_{q \in \mathcal{P}_n} 
            \max\limits_{a \leq t \leq b} |1 - tq(t)| 
        &=& \Theta(c)
        \;\; \coloneq \;\;
        2 \left[\Big(\frac{\sqrt{c} + 1}{\sqrt{c} - 1}\Big)^{n} + \Big(\frac{\sqrt{c} - 1}{\sqrt{c} + 1}\Big)^{n}\right]^{-1},
        \ea
        \eeq 
        where,
        \beq \label{eq:min_max_sol_lb}
        \ba{rcl}
        \Theta(c) & \geq & \Big[\exp\Big(\frac{2n}{\sqrt{c} - 1}\Big) + 1\Big]^{-1}.
        \ea
        \eeq
        Moreover, there exist a finite grid $a \leq t_1 < \ldots < t_{n+1} \leq b$, 
        and a set of coefficients $\pi_1, \ldots, \pi_{n+1}$ from the standard simplex $\Delta_{n+1}$ such that
        \begin{equation} \label{eq:discretization}
        \ba{rcl}
            \min\limits_{q \in \mathcal{P}_n} 
            \sum\limits_{i=1}^{n+1} \pi_i[1 - t_i q(t_i)]^2 
            & = & \Theta(c)^2.
        \ea
        \end{equation}
\end{lemma}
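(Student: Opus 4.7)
The plan is to recognize this as a classical problem in Chebyshev polynomial approximation. The first step is the substitution $p(t) := 1 - tq(t)$, which turns the min-max into an equivalent one of minimizing $\max_{t \in [a,b]} |p(t)|$ over polynomials $p$ of the corresponding degree, subject to the single linear constraint $p(0) = 1$. This is a textbook problem whose unique minimizer is the shifted and rescaled Chebyshev polynomial of the first kind,
$$
p^*(t) \;=\; \frac{T_{k}\!\bigl(\tfrac{b+a-2t}{b-a}\bigr)}{T_{k}\!\bigl(\tfrac{b+a}{b-a}\bigr)},
$$
where $k$ is the relevant degree, achieving the optimal value $1/T_{k}\bigl((b+a)/(b-a)\bigr)$. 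Extremality follows from the Chebyshev equioscillation theorem: any competitor $p$ with strictly smaller $L^\infty$-norm on $[a,b]$ would force $p^* - p$ to change sign at each of the $k+1$ equioscillation points in $[a,b]$ while also vanishing at $0$, producing too many zeros for its degree.

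Next, to obtain the closed form for $\Theta(c)$, use the standard identity
$$
T_k(y) \;=\; \tfrac{1}{2}\bigl[(y + \sqrt{y^2 - 1})^k + (y - \sqrt{y^2 - 1})^k\bigr].
$$
Setting $y = (c+1)/(c-1)$ gives $\sqrt{y^2 - 1} = 2\sqrt{c}/(c-1)$ and hence the clean factorization $y \pm \sqrt{y^2 - 1} = (\sqrt c \pm 1)/(\sqrt c \mp 1)$, which after substitution yields exactly the stated expression for $\Theta(c)$. The lower bound \eqref{eq:min_max_sol_lb} is then a routine estimate: drop the sub-unit second term in the denominator of $\Theta(c)$, and apply the elementary inequality $1 + x \leq e^x$ with $x = 2/(\sqrt c - 1)$ to bound $\bigl((\sqrt c + 1)/(\sqrt c - 1)\bigr)^n = \bigl(1 + 2/(\sqrt c - 1)\bigr)^n \leq \exp\bigl(2n/(\sqrt c - 1)\bigr)$.

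For the discretization \eqref{eq:discretization}, the plan is to take the grid to be the equioscillation points of $p^*$ on $[a,b]$, namely the finitely many $t_i$ where $|p^*(t_i)| = \Theta(c)$, and to choose the weights $\pi_i$ from the simplex as the optimal dual variables in the LP-duality formulation of the $\ell_\infty$-approximation problem (equivalently, as the weights of a Chebyshev-type quadrature that annihilates the low-degree test polynomials). Once these are fixed, the upper bound $\min_q \sum_i \pi_i [1 - t_i q(t_i)]^2 \leq \Theta(c)^2$ is immediate from plugging in $q^*$ and using $\sum_i \pi_i = 1$. The main obstacle is the matching lower bound: if some $q$ produced $\sum_i \pi_i [1 - t_i q(t_i)]^2 < \Theta(c)^2$, then all residuals would be strictly less than $\Theta(c)$ at supports where $\pi_i > 0$, and the same equioscillation/sign-counting argument as above applied to $p - p^*$ (which still vanishes at $0$) yields a contradiction with the degree bound on $p$. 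This alternation bookkeeping, together with the precise selection of weights so that the $\ell_2$-discrete minimum coincides with the $\ell_\infty$-continuous one, is the most delicate part of the argument.
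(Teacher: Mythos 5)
The first half of your argument --- the substitution $u(t) = 1 - t\,q(t)$ with the constraint $u(0)=1$, the shifted Chebyshev minimizer, the equioscillation/zero-counting proof of optimality, the closed form via $T_k(y)=\tfrac12\bigl[(y+\sqrt{y^2-1})^k+(y-\sqrt{y^2-1})^k\bigr]$, and the elementary exponential estimate --- is the standard route, and it is essentially what the paper itself points to (the paper states this lemma without proof, citing classical approximation theory and Lemma 4 of Carmon--Duchi). One piece of bookkeeping you leave unresolved: with the paper's convention that $\mathcal{P}_n$ consists of polynomials of degree at most $n$, the substituted polynomial $u = 1 - tq$ has degree $n+1$, so ``the relevant degree'' is $k = n+1$ and the optimal $u$ equioscillates at $n+2$ points of $[a,b]$; you should commit to a value of $k$, since it fixes both the exponent appearing in $\Theta(c)$ and the number of grid points in the discretized claim, and as stated your argument does not literally reproduce the exponent in the lemma.

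The genuine gap is in the discretization \eqref{eq:discretization}. Your choice of grid (the alternation points) and of weights (optimal dual variables, i.e.\ a signed quadrature annihilating $\{t\,r(t): r \in \mathcal{P}_n\}$) is the right construction, but the verification you sketch is invalid: from $\sum_i \pi_i [1-t_i q(t_i)]^2 < \Theta(c)^2$ you cannot conclude that $|1-t_i q(t_i)| < \Theta(c)$ at every support point --- a weighted average being small only forces \emph{some} term to be small, so the sign-alternation contradiction never gets started. The correct finish uses the annihilation property you mention but do not exploit: choose $\pi_i > 0$ on the alternation points $t_i$, with alternating residual signs $\sigma_i$, such that $\sum_i \pi_i \sigma_i\, t_i r(t_i) = 0$ for all $r \in \mathcal{P}_n$ (existence and positivity follow from counting --- $n+1$ homogeneous linear conditions on the $n+2$ weights --- together with the Vandermonde sign structure, or from LP duality). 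Then $\sum_i \pi_i \sigma_i (1 - t_i q(t_i)) = \sum_i \pi_i \sigma_i$ is the same for every $q \in \mathcal{P}_n$, and evaluating at the Chebyshev minimizer shows this constant equals $\Theta(c)$; Cauchy--Schwarz then gives $\Theta(c)^2 \le \sum_i \pi_i[1-t_i q(t_i)]^2$ for all $q$, with equality at the minimizer (equivalently, the minimizer satisfies the normal equations of the weighted least-squares problem, which is convex). Without this step --- which is precisely the content of the Carmon--Duchi lemma the paper defers to --- your proof of the third claim is incomplete.
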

Note that~\eqref{eq:approx_solution} is a classical result from the field of approximation theory \cite{mason_chebyshev_2002, cheney_introduction_1998}. The problem on the left-hand-side is equivalent to the normalized uniform approximation problem
\begin{equation*}
    \min_{\substack{u \in \mathcal{P}_{n+1} \\ \text{s.t.}\, u(0) = 1}} 
    \max_{a \leq t \leq b}|u(t)|,
\end{equation*}
and it is known that the solution to this problem is the polynomial
$u^{\star}_n(t) = \Theta(c) \cdot T_n\left(\frac{c + 1 - \frac{2}{a}t}{c - 1}\right)$,
where $T_n$ is the Chebyshev polynomial of the first kind \cite{mason_chebyshev_2002, arkadi_nemirovski_information-based_1994, carmon_analysis_2018}. 
Equation~\eqref{eq:discretization} similarly follows by the fact that the solution to \eqref{eq:approx_solution} is given by Chebyshev polynomials. These have $n+1$ extrema of the same magnitude in the given range, allowing us to represent the problem using a convex combination of these $n+1$ points~\cite{mason_chebyshev_2002, carmon_analysis_2018}. See Lemma~4 of \cite{carmon_analysis_2018} for a complete proof of these results. 

We are now in a position where we can prove a general complexity lower bound using our construction on problems of the form~\eqref{eq:prob}:
\begin{theorem}\label{thm:main_thm}
Let $p \geq 2$, $s > 0$, $0 \leq \mu \leq L$ and fix $r > 0$. For any first-order method running for $N \leq \frac{d - 1}{2}$ iterations, there is a regularized quadratic function $f$ such that $\|\xopt\| = r$,
$\lambda_{\min}(A) = \mu$, $\lambda_{\max}(A) = L$, 
and
\beq \label{thm:main_dist_eq}
\ba{rcl}
    \|x_N - \xopt\| & \geq & 
    r \exp \left(-\frac{8N}{\sqrt{Q_\star} - 1}\right),
\ea
\eeq
where
$$
\ba{rcl}
    Q_\star & \coloneq & \frac{L + s r^{p-2}}{\mu + sr^{p-2}}
    \;\; \geq \;\; 1,
\ea
$$
    is a modified condition number of the associated problem.
Furthermore, the functional residual of the last iteration satisfies the lower bound:
\beq \label{eq:main_thm}
\ba{rcl}
    f(x_N) - f_\star & \geq & \frac{s}{p2^{p-2}}r^p \exp\Big(-\frac{8pN}{\sqrt{\Qmod} - 1}\Big),
\ea
\eeq
and the gradient norm is bounded as:
\beq \label{eq:grad_norm_lb}
\ba{rcl}
    \|\nabla f(x_N)\| 
    &\geq& 
    \frac{2^{2-p}s}{(p - 1)^{(p - 1) / p}} r^{p - 1} 
         \exp\Big(- 
 \frac{8(p - 1)N}{\sqrt{\Qmod} - 1}\Big).
\ea
\eeq
\end{theorem}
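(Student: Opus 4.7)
The plan is to reduce the lower bound to the uniform polynomial approximation estimate of Lemma~\ref{lem:min_max_sol}, via the worst-case trajectory characterization of Proposition~\ref{prop:poly}. Starting from $\|x_N - \xopt\| \geq \min_{q \in \mathcal{P}_{2N}} \|q(\Ar)b - \xopt\|$ and using $\xopt = \Ar^{-1} b$ from~\eqref{XStarExpr}, one rewrites $q(\Ar)b - \xopt = -(I - \Ar q(\Ar))\xopt$. The spectral decomposition $\Ar = U(\Lambda + sr^{p-2}I) U^\top$, together with the $b$-invariance of $U$ and the explicit form $\xopt = rU\sqrt{\pi}$ from Lemma~\ref{LemmaXOpt}, diagonalizes this expression into
\begin{equation*}
\|q(\Ar)b - \xopt\|^2 \;=\; r^2 \sum_{i=1}^d \pi_i \bigl(1 - \tilde\lambda_i q(\tilde\lambda_i)\bigr)^2,
\qquad \tilde\lambda_i := \lambda_i + sr^{p-2} \in [\mu + sr^{p-2},\, L + sr^{p-2}],
\end{equation*}
whose shifted spectral interval has endpoint ratio precisely $\Qmod$.

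Next, I would invoke Lemma~\ref{lem:min_max_sol} with $a = \mu + sr^{p-2}$, $b = L + sr^{p-2}$, $c = \Qmod$, and polynomial degree $n = 2N$: it supplies a grid $t_1 < \ldots < t_{2N+1}$ in $[a,b]$ (including both endpoints, since Chebyshev extrema contain them) together with simplex weights attaining the minimum $\Theta(\Qmod)^2$. Setting $\lambda_i := t_i - sr^{p-2}$ for $i \leq 2N+1$ and, if $d > 2N+1$, padding the remaining entries with any distinct values in $[\mu,L]$ (extending $\pi$ by zeros), yields a construction that simultaneously realizes the prescribed spectral extremes $\lambda_{\min}(A) = \mu$ and $\lambda_{\max}(A) = L$, the strict ordering demanded in Proposition~\ref{prop:poly}, and the target weighted approximation problem; the condition $2N+1 \leq d$ is exactly the hypothesis $N \leq (d-1)/2$. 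It follows that
\begin{equation*}
\|x_N - \xopt\|\;\geq\; r \, \Theta(\Qmod)
\;\overset{\eqref{eq:min_max_sol_lb}}{\geq}\; r \bigl[\exp\bigl(\tfrac{4N}{\sqrt{\Qmod}-1}\bigr) + 1\bigr]^{-1},
\end{equation*}
and~\eqref{thm:main_dist_eq} then follows by the elementary bound $e^x + 1 \leq 2 e^x$ and absorbing the constant $\ln 2$ into the exponent by doubling its prefactor (valid in the regime where the rate is non-trivial; boundary cases can be handled separately at the cost of the absolute constant). The most delicate point is checking that one single adversarial construction simultaneously meets all four requirements just listed, which is why the Chebyshev node set is so convenient: it automatically places the extremal nodes at $a$ and $b$, realizing the spectral endpoints exactly.

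Finally, bounds~\eqref{eq:main_thm} and~\eqref{eq:grad_norm_lb} follow mechanically. Applying uniform convexity~\eqref{eq:func_residual} with $\sigma_p = s\,2^{2-p}$ to the distance bound~\eqref{thm:main_dist_eq} raises it to the $p$-th power and produces~\eqref{eq:main_thm}. The gradient-norm bound~\eqref{eq:grad_norm_lb} is obtained by inverting~\eqref{eq:grad_ineq} to express $\|\nabla f(x_N)\|$ from below in terms of $f(x_N) - f_\star$, substituting~\eqref{eq:main_thm}, and simplifying the resulting exponents of $s$, $r$, and $2$, which collapse exactly into the claimed form.
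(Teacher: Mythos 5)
Your proposal follows essentially the same route as the paper's proof: reduce to the weighted polynomial approximation problem via Proposition~\ref{prop:poly} and Lemma~\ref{LemmaXOpt}, place the shifted eigenvalues on the Chebyshev grid of Lemma~\ref{lem:min_max_sol} with the corresponding simplex weights (padding $\pi$ with zeros and distinct filler eigenvalues when $d > 2N+1$), and then obtain \eqref{eq:main_thm} and \eqref{eq:grad_norm_lb} from uniform convexity exactly as you describe. The one place you deviate is the final constant bookkeeping: by passing through the weakened bound \eqref{eq:min_max_sol_lb} you need $[\exp(x)+1]^{-1} \geq e^{-2x}$ with $x = \frac{4N}{\sqrt{\Qmod}-1}$, which fails when $x$ is small (e.g.\ when $\Qmod$ is very large relative to $N$), and your suggested patch of a separate boundary case would degrade the explicit constant $8$ claimed in \eqref{thm:main_dist_eq}. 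The paper avoids any case split by retaining the factor $2$ from the exact expression \eqref{eq:approx_solution}: since $\bigl(\frac{\sqrt{\Qmod}+1}{\sqrt{\Qmod}-1}\bigr)^{2N} \leq e^{x}$ and the reciprocal term is at most $1 \leq e^{x}$, one gets $\|x_N - \xopt\| \geq 2r/(e^{x}+1) \geq 2r/(2e^{x}) = r e^{-x} \geq r e^{-2x}$, valid for all $x \geq 0$, which gives the stated constant unconditionally. With this one-line fix your argument coincides with the paper's; your explicit observation that the Chebyshev extrema include both endpoints of $[\mu + sr^{p-2},\, L + sr^{p-2}]$, so that $\lambda_{\min}(A)=\mu$ and $\lambda_{\max}(A)=L$ are realized exactly, is a detail the paper treats only implicitly.
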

\begin{proof}
    Let $\mathcal{M}$ be a fixed first-order method running for $N$ iterations, 
    where $2N + 1 \leq d$. Furthermore, let $A$ be of the form in~\eqref{SpecDec}, where $\mu = \lambda_1 < \lambda_2 < \ldots < \lambda_d = L$, and we construct $U$ in accordance with Lemma~\ref{lem:kryltraj}. This choice of $U$ lets us represent the last iteration using a polynomial in $\Ar = A + s\|\xopt\|^{p-2}I = A + sr^{p-2}I$, with eigenvalues $\lambda^{\star}_i = \lambda_i + s r^{p-2}$. We continue defining $b$ as in~\eqref{BChoice}, leaving us the choice of the coefficients $\pi_i$ and the eigenspectrum of $A$ before we have a fully-defined construction of a regularized quadratic function. Using Proposition~\ref{prop:poly} and Lemma~\ref{LemmaXOpt} we get
    \begin{equation}\label{eq:construction_invoked}
    \ba{rcl}
        \|x_N - \xopt\|^2 &\geq& \min\limits_{q \in \mathcal{P}_{2N}}\|q(\Ar)b - \xopt\|^2 
        \;\; = \;\; \min\limits_{q \in \mathcal{P}_{2N}}\|[q(\Ar) \Ar - I] \xopt\|^2 \\
        \\
        &\overset{(\ref{Xopt})}{=}& \min\limits_{q \in \mathcal{P}_{2N}}\|[q(\Ar) \Ar - I] U r \sqrt{\pi}\|^2 
        \;\; = \;\; r^2 \min\limits_{q \in \mathcal{P}_{2N}} \sum\limits_{i=1}^d \pi_i [1 - \lambda_i^{\star} q(\lambda_i^{\star})]^2,
    \ea
    \end{equation}
    which holds for any $\pi \in \Delta_d$ on the standard simplex. 
    Note that the choice of $\pi$ is arbitrary, and that for any $\pi' \in \Delta_k$, where $k\leq d$, there is some $\pi$ such that the weighted sum over $\pi$ in equation~\eqref{eq:construction_invoked} can be lower-bounded by a weighted sum over $\pi'$\footnote{For example, setting $\pi_{i} = \pi'_{i}$ for all $i \leq k$, and $\pi_i = 0$ for $i>k$.}:
    \begin{equation*}
    \ba{rcl}
        \|x_N - \xopt\|^2 
        & \geq &
        r^2
        \min\limits_{q \in \mathcal{P}_{2N}} \sum\limits_{i=1}^d \pi_i [1 - \lambda_i^{\star} q(\lambda_i^{\star})]^2 
        \;\; \geq \;\;
        r^2
        \min\limits_{q \in \mathcal{P}_{2N}} 
        \sum\limits_{i=1}^{2N + 1} \pi'_i [1 - \lambda_i^{\star} q(\lambda_i^{\star})]^2.
    \ea
    \end{equation*}
    Now, we can simply use the $\pi'$ from Lemma~\ref{lem:min_max_sol} with the notation $\mu_\star \coloneq \mu + sr^{p-2}$, $L_\star \coloneq L + sr^{p-2}$, and $\Qmod \coloneq L_\star / \mu_\star$. Taking the square root, we obtain the following bound:
    \beq \label{XNBound}
    \ba{rcl}
        \|x_N - \xopt\| 
        & \geq & 
        r \min\limits_{q \in \mathcal{P}_{2N}} 
        \max\limits_{\mu_\star \leq t \leq L_\star} |1 - t q(t)| 
        \;\; \stackrel{\eqref{eq:approx_solution}}{=} \;\; 2r \left[\Big(\frac{\sqrt{Q_\star} + 1}{\sqrt{Q_\star} - 1}\Big)^{2N} + \Big(\frac{\sqrt{Q_\star} - 1}{\sqrt{Q_\star} + 1}\Big)^{2N}\right]^{-1} \\
        \\
        & \stackrel{\eqref{eq:min_max_sol_lb}}{\geq} & 2r \Big[\exp\Big(\frac{4N}{\sqrt{\Qmod} - 1}\Big) + 1\Big]^{-1}
        \;\; \geq \;\; r \exp\Big(-\frac{8N}{\sqrt{\Qmod} - 1}\Big),
    \ea
    \eeq
    completing the first part of the proof. 
    The bounds for the functional residual and the gradient norm follows directly from uniform convexity
    (see the end of Theorem~\ref{TheoremGMRate}).
\end{proof}

\begin{remark} \label{RemarkSimplified2}
    Note that setting $\mu = 0$ and 
    ensuring that $\Qmod \geq 2$,
    we obtain the following simpler expressions for the lower bounds:
    \beq \label{RemNormDist2}
    \ba{rcl}
    \|x_N - \xopt\| & \geq & 
    r \exp\left( -16N \sqrt{\frac{sr^{p-2}}{L}} \right)
    \ea
    \eeq
    
    \beq \label{RemFuncRes2}
    \ba{rcl}
    f(x_N) - f_{\star} & \geq & 
    \frac{s}{p2^{p-2}} r^p \exp\left(-16pN \sqrt{\frac{sr^{p-2}}{L}} \right),
    \ea
    \eeq
    and
    \beq \label{RemGradRes2}
    \ba{rcl}
    \| \nabla f(x_N) \| & \geq & 
    s 2^{2-p}\Big(\frac{1}{p-1}\Big)^{\frac{p-1}{p}} r^{p - 1} 
         \exp\left(-16(p - 1)N \sqrt{\frac{sr^{p - 2}}{L}} \,\right).
    \ea
    \eeq
\end{remark}

\subsection{Implications for Specific Classes of Functions}\label{sec:implications}

\paragraph{Uniformly Convex Functions} %
Throughout this section, we assume that $p > 2$ and $s > 0$.
We can set the smallest eigenvalue 
$\mu = \lambda_{\min}(A)$ to be $0$. In this scenario, our function is uniformly convex, but not strongly convex. 

We now have the tools to state and prove our main result on the lower bound of minimizing regularized convex quadratic functions using first-order methods:
\begin{theorem}\label{thm:final}
    Let $p > 2, s > 0$ and $L > 0$. 
    For any first-order method
    that performs $1 \leq N \leq \frac{d - 1}{2}$ iterations, 
    there exists a regularized quadratic function $f$ 
    with $\lambda_{\max}(A) = L$,
    such
    that the functional residual is bounded as:
    \beq \label{FuncResFinal}
    \ba{rcl}
        f(x_N) - f_\star 
        &=& \lb \left( \Big[\frac{L}{s^{2/p} N^2}\Big]^{\frac{p}{p-2}} \right).
    \ea
    \eeq
    And for minimizing the gradient norm, we have the following lower bound:
    \beq \label{GradNormFinal}
    \ba{rcl}
    \| \nabla f(x_N) \| 
    &=&
    \lb \left( \Big[\frac{L}{s^{1/(p - 1)} N^2}\Big]^{\frac{p-1}{p-2}} \right).
    \ea
    \eeq
\end{theorem}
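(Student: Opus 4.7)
}
The plan is to invoke Theorem~\ref{thm:main_thm} (in its simplified form from Remark~\ref{RemarkSimplified2}) with $\mu = 0$ and then optimize the free parameter $r = \|\xopt\|$ so that the exponential term in the lower bound becomes an absolute constant. Once the exponent is absorbed into an $\lb(\cdot)$ constant, the polynomial prefactor $r^p$ (resp.\ $r^{p-1}$) will give the target rates after a routine algebraic simplification.

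More precisely, I first fix any first-order method $\mathcal{M}$ with $N$ iterations satisfying $2N + 1 \leq d$, and I set $\mu := 0$ in the adversarial construction from Theorem~\ref{thm:main_thm}. Then I choose the radius
\[
r \;:=\; \Bigl( \tfrac{L}{s N^2} \Bigr)^{\frac{1}{p-2}},
\]
so that $s r^{p-2} = L / N^2$. With this choice, the quantity inside the square root in~\eqref{RemFuncRes2} and~\eqref{RemGradRes2} satisfies $\sqrt{s r^{p-2}/L} = 1/N$, and the exponential factors collapse to absolute constants $e^{-16p}$ and $e^{-16(p-1)}$ respectively. I also need to verify the side condition $\Qmod \geq 2$ of the Remark: since $\mu = 0$ we have $\Qmod = (L + s r^{p-2})/(s r^{p-2}) = N^2 + 1 \geq 2$ for all $N \geq 1$, which is satisfied.

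It remains to substitute $r$ into the prefactors. Using $r^p = r^{p-2} \cdot r^2$ and $r^{p-1} = r^{p-2} \cdot r$, a direct computation gives
\[
s \, r^p \;=\; \frac{L^{p/(p-2)}}{s^{2/(p-2)} N^{2p/(p-2)}} \;=\; \Bigl[\tfrac{L}{s^{2/p} N^2}\Bigr]^{\frac{p}{p-2}},
\qquad
s \, r^{p-1} \;=\; \frac{L^{(p-1)/(p-2)}}{s^{1/(p-2)} N^{2(p-1)/(p-2)}} \;=\; \Bigl[\tfrac{L}{s^{1/(p-1)} N^2}\Bigr]^{\frac{p-1}{p-2}}.
\]
Plugging these into~\eqref{RemFuncRes2} and~\eqref{RemGradRes2} and absorbing the $p$-dependent constants (namely $e^{-16p}/(p 2^{p-2})$ and $2^{2-p} (p-1)^{-(p-1)/p} e^{-16(p-1)}$) into the $\lb(\cdot)$ yields exactly the bounds~\eqref{FuncResFinal} and~\eqref{GradNormFinal}.

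There is no genuine obstacle beyond bookkeeping: all the heavy lifting (the Krylov-trajectory argument, the Chebyshev approximation bound, and the passage from $\|x_N - \xopt\|$ to functional residual and gradient norm via uniform convexity) is already packaged inside Theorem~\ref{thm:main_thm}. The only nontrivial choice is the scaling of $r$, which is dictated by a standard ``balance the two terms'' heuristic: the monomial $r^p$ pushes $r$ up, while the exponential $\exp(-c N \sqrt{s r^{p-2}/L})$ pushes $r$ down, and the optimum forces $N \sqrt{s r^{p-2}/L}$ to be $\Theta(1)$, which is exactly the scaling I selected.
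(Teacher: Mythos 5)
Your proposal is correct and follows essentially the same route as the paper: invoke Theorem~\ref{thm:main_thm} with $\mu=0$ through the simplified bounds \eqref{RemFuncRes2}--\eqref{RemGradRes2}, pick $r \propto (L/(sN^2))^{1/(p-2)}$ so that $N\sqrt{sr^{p-2}/L}=\Theta(1)$, and absorb the resulting constants into $\lb(\cdot)$. The only difference is that the paper takes $r=(L/(3sN^2))^{1/(p-2)}$, which gives $Q_\star=3N^2+1\geq 4$ and thus satisfies the condition of Remark~\ref{RemarkSimplified2} with margin (your choice yields $Q_\star=N^2+1$, which at $N=1$ sits exactly at the stated threshold $Q_\star\geq 2$), but this affects only the absolute constant hidden in $\lb(\cdot)$.
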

\begin{proof}
    After using Theorem~\ref{thm:main_thm}, the proof is similar to that of Theorem~\ref{thm:gm_final_lb}, maximizing the right hand side expression with respect to $r$. This leads us to setting
    \beq
    \ba{rcl}
    r & = & \Bigl(\frac{L}{3sN^2}\Bigr)^{\frac{1}{p-2}},
    \ea
    \eeq
    which also ensures that $Q_{\star} =  3N^2 + 1 \geq 4$.
    Inserting this expression for $r$ into lower bounds~\eqref{RemFuncRes2} and~\eqref{RemGradRes2} completes the proof.
\end{proof}

Note that lower bound~\eqref{FuncResFinal}
for the functional residual is
the same as it was established in \cite{doikov_lower_2022} 
for a general smooth uniformly convex functions, with a non-quadratic worst-case
instance of the resisting function.
Therefore, our result confirms that \textit{regularized quadratic functions} remain 
some of the
\textit{the most difficult objectives} for gradient methods.

Moreover, the convergence rate from~\eqref{FuncResFinal}
was established for the composite version
of the Fast Gradient Method~\cite{roulet2017sharpness,nesterov_inexact_2022}.
Therefore, our result shows that this rate is tight for 
regularized quadratic objectives.
It remains an interesting open question, whether it is possible
to achieve the corresponding bound~\eqref{GradNormFinal} for minimizing the gradient norm.

\paragraph{Strongly Convex Functions} %

Now, let us demonstrate that Theorem~\ref{thm:main_thm}
also allows us to recover the classical lower bounds well-known in the literature.
We set $p = 2$ to analyze the class of strongly convex functions with Lipschitz continuous gradients.
A direct consequence of Theorem~\ref{thm:main_thm} is the following statement.

\begin{corollary}\label{cor:strongcvx}
    Let $r>0$, $s > 0$, $\mu > 0$, and $L > \mu$. For any first-order method that
    performs $1 \leq N \leq \frac{d - 1}{2}$ iterations, there is a strongly convex quadratic function $f$
    with $\lambda_{\min}(A) = \mu$, $\lambda_{\max}(A) = L$, and $\| \xopt \| = r$, such that the functional residual of the last iteration is lower bounded by
    $$
    \ba{rcl}
        f(x_N) - f_\star & \geq & 
        sr^2 \exp \Big(-\frac{16N}{\Qmod^{1/2} - 1}\Big),
    \ea
    $$
    and
    $$
    \ba{rcl}
    \| \nabla f(x_N) \| & \geq &
    sr \exp \Bigl( -\frac{8N}{\Qmod^{1/2} - 1} \Bigr),
    \ea
    $$
    where $r = \|\xopt\|$ and $\Qmod = \frac{L + s}{\mu + s}$ is the modified condition number of $f$.
\end{corollary}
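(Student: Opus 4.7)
The plan is to obtain Corollary~\ref{cor:strongcvx} as a direct specialization of Theorem~\ref{thm:main_thm} to the case $p = 2$. All the hypotheses of the corollary (namely $s > 0$, $0 < \mu \leq L$, $r > 0$, and $1 \leq N \leq (d - 1)/2$) are a subset of the hypotheses of Theorem~\ref{thm:main_thm}, so invoking that theorem with $p = 2$ produces a regularized convex quadratic $f$ with $\lambda_{\min}(A) = \mu$, $\lambda_{\max}(A) = L$, and $\|\xopt\| = r$, satisfying the distance bound~\eqref{thm:main_dist_eq}, the functional-residual bound~\eqref{eq:main_thm}, and the gradient-norm bound~\eqref{eq:grad_norm_lb}. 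No new resisting construction is needed; the $p = 2$ instance of the family built in Section~\ref{sec:lower_bounds} already supplies the required worst-case function.

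Then I would simply evaluate those two bounds at $p = 2$. For the functional residual~\eqref{eq:main_thm}, the prefactor $s/(p \cdot 2^{p-2})$ collapses to $s/2$, the factor $r^p$ becomes $r^2$, and the exponent $-8pN/(\sqrt{\Qmod} - 1)$ becomes $-16N/(\sqrt{\Qmod} - 1)$; absorbing the harmless numerical constant $1/2$ into the statement gives the claimed $sr^2 \exp(-16N/(\sqrt{\Qmod} - 1))$. For the gradient norm~\eqref{eq:grad_norm_lb}, one has $2^{2-p} = 1$, $(p-1)^{(p-1)/p} = 1$, and $p - 1 = 1$, so the bound reduces exactly to $sr \exp(-8N/(\sqrt{\Qmod} - 1))$.

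Finally, I would observe that for $p = 2$ the modified condition number takes the form $\Qmod = (L + s r^{p-2})/(\mu + s r^{p-2}) = (L + s)/(\mu + s)$, which is precisely the standard condition number of the strongly convex quadratic $\tfrac{1}{2} x^{\top}(A + sI) x - b^{\top} x$ obtained by folding the regularizer $\tfrac{s}{2}\|x\|^2$ into the Hessian. In this sense the corollary recovers the classical $\exp(-\Theta(N / \sqrt{Q}))$ lower bound of Nemirovski for minimizing strongly convex smooth quadratics as the endpoint $p = 2$ of our framework. There is essentially no obstacle to overcome here: the proof is a one-line substitution into Theorem~\ref{thm:main_thm}, and the only bookkeeping item is the absorbed multiplicative constant $1/2$ mentioned above.
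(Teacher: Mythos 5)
Your proposal is correct and follows exactly the paper's route: the corollary is stated there as a direct consequence of Theorem~\ref{thm:main_thm} specialized to $p=2$, and your substitutions (giving $\Qmod=(L+s)/(\mu+s)$ and the gradient-norm bound $sr\exp(-8N/(\sqrt{\Qmod}-1))$ exactly) are the intended ones. The only caveat concerns your remark about ``absorbing'' the constant: plugging $p=2$ into~\eqref{eq:main_thm} actually yields the prefactor $\tfrac{s}{2}r^{2}$, which is a factor of $2$ \emph{weaker} than the stated $sr^{2}$, but this discrepancy is inherited from the paper's own (slightly generous) statement of the corollary rather than being a flaw in your argument.
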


For $s > 0$, this bound is stronger than the classic bound for smooth strongly convex 
functions as the numerator and denominator of our modified condition number 
have been shifted by the regularization parameter $s$. One should note that when $p=2$,
\begin{equation*}
    f(x) = \frac{1}{2} x^\top A x - b^\top x + \frac{s}{p} \|x\|^2 = \frac{1}{2} x^\top (A + sI) x - b^\top x,
\end{equation*}
and problems of the form in~\eqref{eq:prob} correspond directly to instances of quadratic functions. The theory we present thus also recovers existing complexity results from the literature on smooth strongly convex functions.

\paragraph{Convex Functions} %

Let us now consider a setting in which problems of the form in~\eqref{eq:prob} are equivalent to instances of the trust-region subproblem. For this section, we let $s = \frac{1}{D^p}$, where $D > 0$ is a fixed parameter, and consider what happens when $p \to +\infty$. Then the problem of the form in~\eqref{eq:prob} is equivalent to
\begin{equation*}
    \min_{\substack{x \in \R^d: \\ \|x\| \leq D}} \Bigg[f(x) = \frac{1}{2}x^\top A x - b^\top x \Bigg],
\end{equation*}
and becomes an instance of a constrained optimization problem. 
The corresponding bound from Theorem~\ref{thm:final} is
$$
\ba{rcl}
f(x_N) - f_{\star} & = &
\Omega\left( \Bigl[ \frac{LD^2}{N^2} \Bigr]^{\frac{p}{p - 2}}  \right)
\;\; \underset{p \to +\infty}{\longrightarrow} \;\;
\Omega\left( \frac{LD^2}{N^2} \right),
\ea
$$
which is the standard bound for minimizing 
smooth convex functions over the Euclidean ball.
At the same time, it is also known that 
the accelerated version of the projected gradient method \cite{nesterov_lectures_2018}
achieves the convergence rate of $\ub(\frac{L D^2}{N^2})$,
which matches the lower bound.
We conclude that our theory appears consistent with existing results on this class of trust-region subproblems.

\section{Experiments}\label{sec:experiments}

To validate our theoretical results, we construct the worst-case instances described throughout Section~\ref{sec:lower_bounds} and measure the performance of a Krylov subspace solver in terms of functional residual. In order to avoid problems with conditioning, our implementation of the Krylov subspace solver does not optimize in the subspace spanned by the vectors $b, \Ar b, \ldots, \Ar^{k-1} b$ directly, but rather over a basis generated for the $k$-th order Krylov subspace using the Lanczos process~\cite{chen2024lanczos}.

Our first experiment compares the performance of a Krylov subspace solver with the gradient method using the step size outlined in~\eqref{GradResBound2} on a single instance of our construction. We also examine the performance of these two methods on randomly generated instances of our problem (for details, see Appendix~\ref{app:random_qps}) in Figure~\ref{fig:single_runs}. The point of this experiment is to see what the difference in performance is between the two methods, and how our construction compares with more natural instances of our problem class. We found that the gradient method using a constant step size is suboptimal compared to our Krylov subspace solver, and that our construction is indeed far more difficult to minimize in terms of functional residual.

To test the validity of our general lower bound construction, we run an experiment where we measure the lowest functional residual measured on our construction as a function of the maximum number of iterations we run using a Krylov subspace solver, and the gradient method using the step size defined in~\eqref{GradResBound2}. Since our lower bound on our construction is directly dependent on the number of iterations $N$ we intend to run using a given first-order method, we construct individual problem instances for a variety of values between $1 \leq N \leq \frac{d-1}{2}$ and measure the functional residual after running the solver for $N$ iterations. Due to numerical difficulties associated with constructing a vector $\pi \in \Delta_{2N+1}$ according to the proof of Lemma~\ref{lem:min_max_sol}, we instead elect to use a heuristic for $\pi$. Details about the implementation of this experiment are available in Appendix~\ref{app:exp_imp} and details about the difficulties associated with constructing $\pi$ are available in Appendix~\ref{app:exp_imp_constr}. Figure~\ref{fig:verifying_lower_bound_and_pi_experiment} (left) contains the results of this first experiment. We also compare the construction generated by our heuristic estimation of the vector $\pi$ with the uniform distribution $\pi_i \coloneqq \frac{1}{2N+1}$. The results of this experiment can be found in Figure~\ref{fig:verifying_lower_bound_and_pi_experiment} (right). The theoretical framework describes the performance of our construction using our heuristic exceptionally well, and it appears to capture the trend for the uniformly distributed $\pi$ reasonably well until the last few instances if we ignore constants.

To further explore relaxations on our construction, we use this uniform distribution of $\pi$ and examine whether we can generate difficult problem instances using a random eigenspectrum of $A$ sampled from a beta distribution. We generate such a problem instance with the same problem parameters as in the previous experiment, except we set\footnote{We elected for a smaller-dimensional problem due to the problem of finding solutions to Krylov subspaces in higher dimensions being badly conditioned on the random problem instance.} $d=100$, and compare their functional residual as a function of the number of iterations we run with a problem instance where the eigenvalues correspond to the Chebyshev extrema discussed in Section~\ref{sec:lower_bounds}. The results of that experiment, and a visualization of the sampled eigenvalues, are available in Figure~\ref{fig:eigenval_experiment}.

\newpage

\begin{figure}[h!]
    \centering
    \subfigure[]{\includegraphics[width=0.48\textwidth]{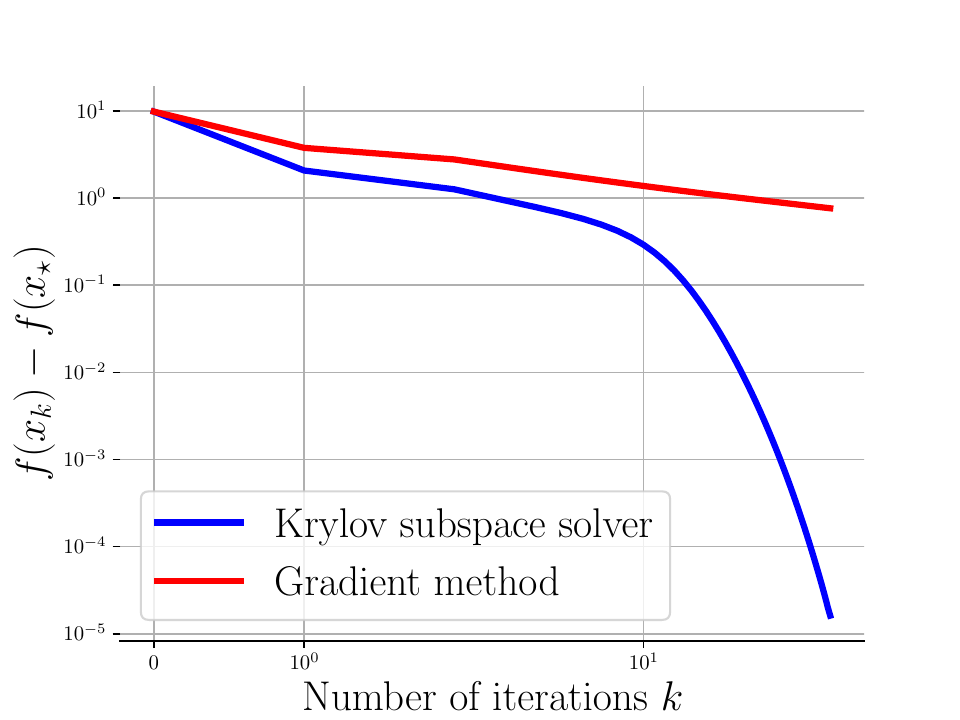}\label{fig:single_run_adversarial}}
    \subfigure[]{\includegraphics[width=0.48\textwidth]{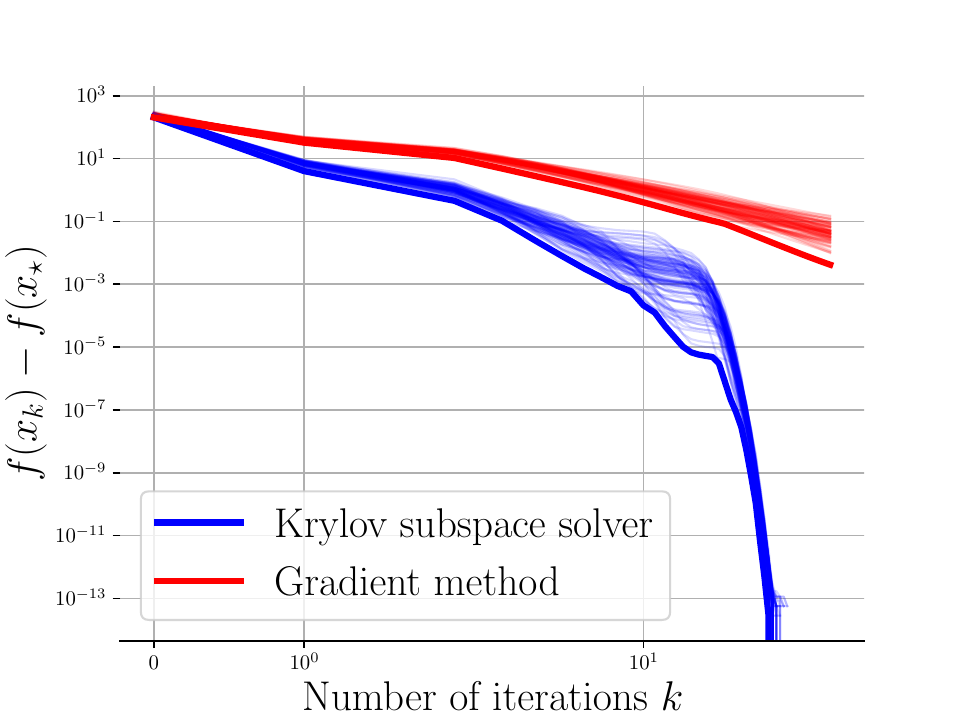}\label{fig:single_run_random}}
    \caption{The functional residual after $N$ steps of running our implementation of a Krylov subspace solver (blue), as compared to the gradient method for the same number of steps (red) on~\subref{fig:single_run_adversarial} adversarially constructed instances, and~\subref{fig:single_run_random} on randomly generated problem instances. In~\subref{fig:single_run_random} we also highlight the per-iteration lowest functional residual with a solid line to illustrate the worst-case performance across the random instances.
    }
    \label{fig:single_runs}
\end{figure}

\begin{figure}[h!]
    \centering
    \includegraphics[width=0.48\textwidth]{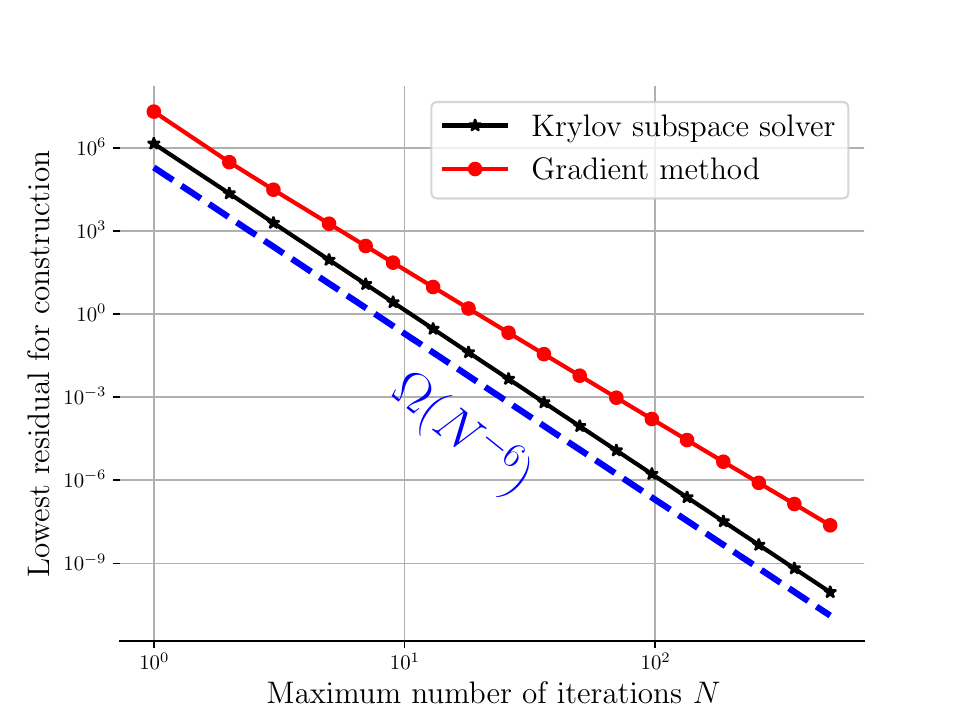}
    \includegraphics[width=0.48\textwidth]{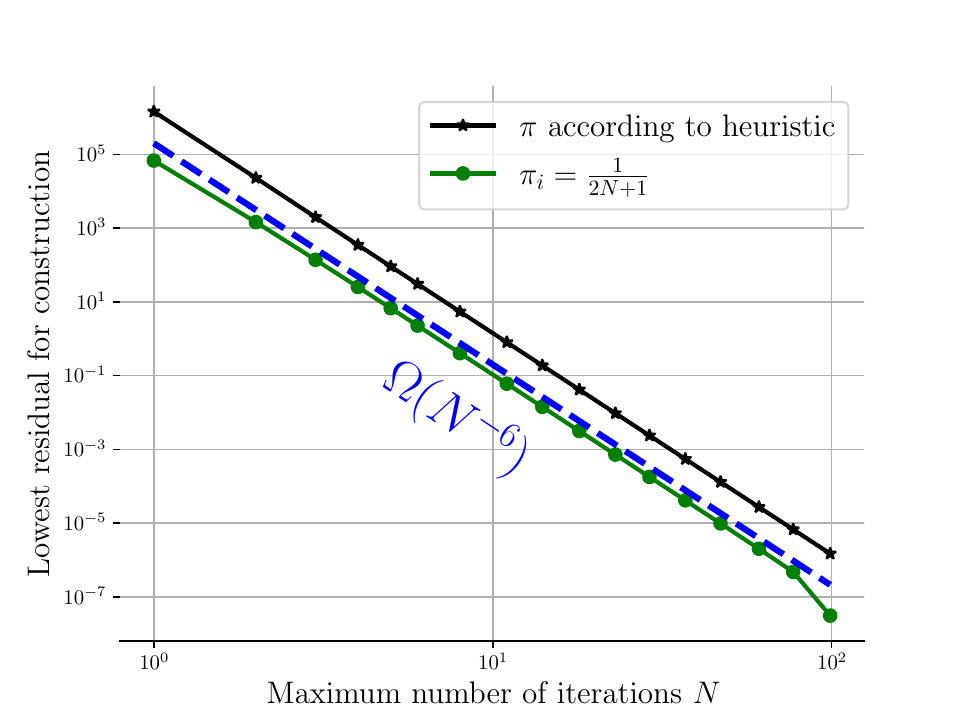}
    \caption{
    Result of running a Krylov subspace solver on our adversarially constructed problem instances with $\pi$ generated using our heuristic (black).
    On the left graph we compare its performance with the Gradient Method (red). On the right graph, we compare the performance with the problem instance with evenly allocating mass to all components of $\pi$ (red).
    Each point along the $x$-axis corresponds to a separate construction, and the value along the $y$-axis is the minimal functional residual achieved by the solver after $N$ iterations. 
    The blue dashed line corresponds to the asymptotic trend predicted by our theory when $p=3$. Our theory lower bounds the performance in both cases, but it is likely a tight estimate for the worst-case performance of Krylov subspace solvers given that the slope matches exactly what is predicted in theory.}
    \label{fig:verifying_lower_bound_and_pi_experiment}
\end{figure}

\newpage

\begin{figure}[h!]
    \centering
    \subfigure[]{\includegraphics[width=0.48\textwidth]{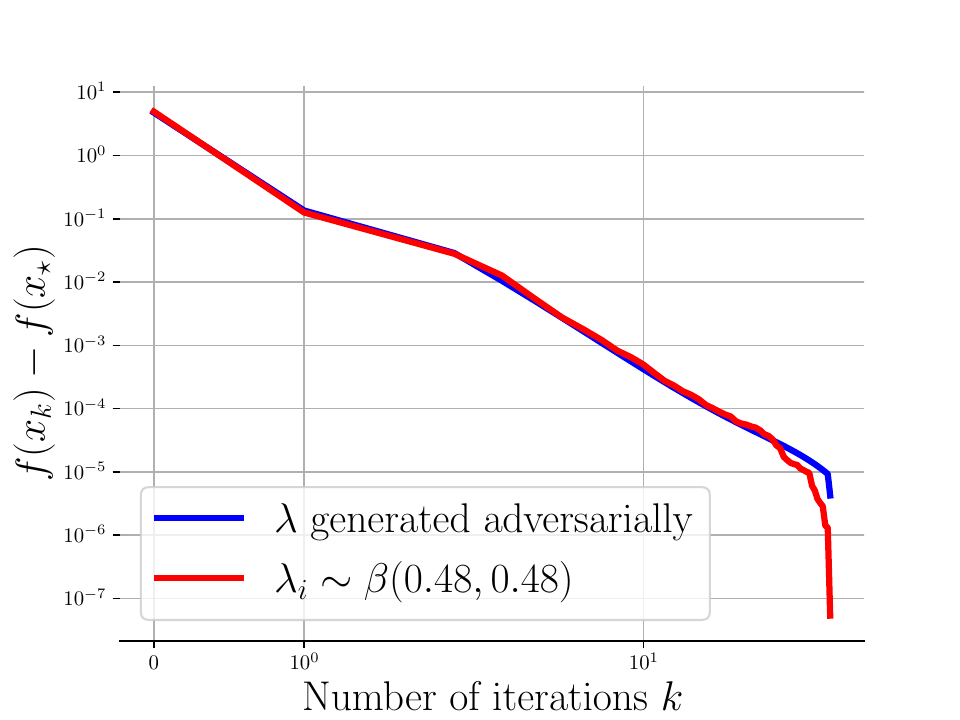}\label{fig:eigenval_experiment_graph}}
    \subfigure[]{\includegraphics[width=0.48\textwidth]{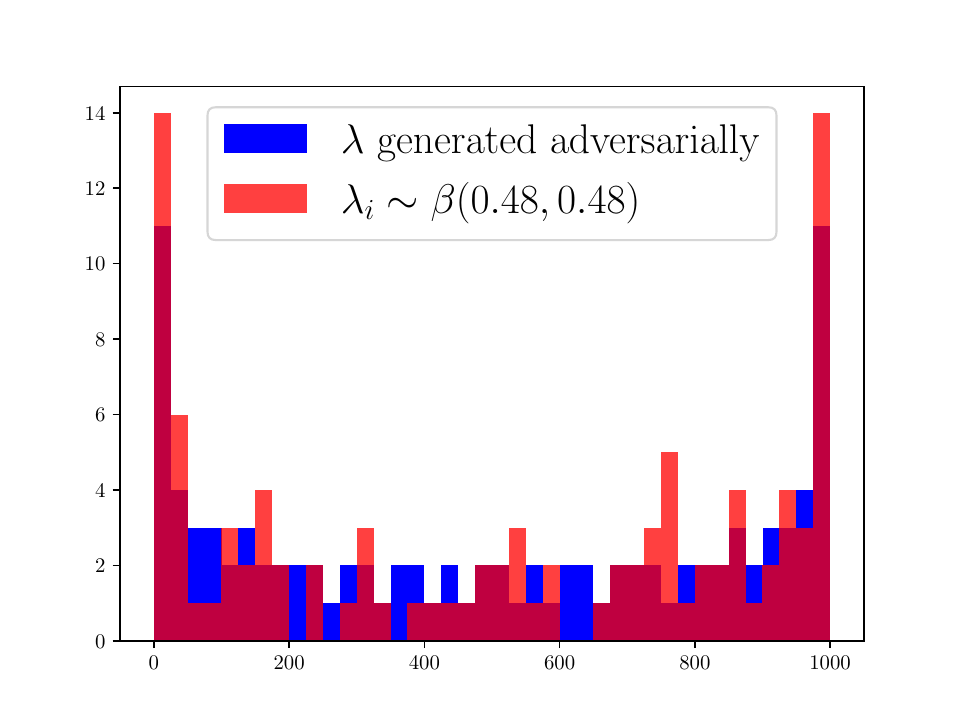}\label{fig:eigenval_experiment_hist}}
    \caption{Result of running a Krylov subspace solver on our adversarially constructed problem instances with eigenvalues of $A$ generated according to the construction in blue, and randomly according to a beta distribution in red. \subref{fig:eigenval_experiment_graph} contains the functional residual at every step of the method, and \subref{fig:eigenval_experiment_hist} is a histogram of the eigenvalues. The beta distribution appears to achieve a good approximation of the adversarially placed eigenvalues, as seen in both plots.}
    \label{fig:eigenval_experiment}
\end{figure}

\section{Discussion}

In this paper, we present the oracle complexity for gradient methods
minimizing the class of uniformly convex regularized quadratic functions. It contains a shortened version of a broader framework adapted from a set of lecture notes by Arkadi Nemirovski \cite{arkadi_nemirovski_information-based_1994} and developed in an unpublished master's thesis~\cite{master_thesis}. It covers both existing results for smooth and strongly convex functions and establishes new rates for the classes of uniformly convex functions. 

Our results confirm that quadratic functions constitute the main computational difficulty 
for the first-order methods. At the same time, exploiting the uniform convexity of the problem,
we see that the rate can be significantly improved compared to purely convex rates.
Thus, for the case $p = 3$ (cubically regularized convex quadratic functions), 
the rate of one-step methods such as the basic gradient descent is $\mathcal{O}(1 / N^3)$
and the optimal rate becomes $\mathcal{O}(1 / N^6)$, which is significantly better
than the standard rates of the order $\mathcal{O}(1 / N)$ and $\mathcal{O}(1 / N^2)$ correspondingly.

\bibliographystyle{plain}
\bibliography{ref}

\newpage

\appendix
\section{Additional experiments} 

Figure~\ref{fig:functional_residual_grid} contains a grid of plots of the functional residual for the gradient method and a Krylov subspace solver when varying the problem parameters $L$ and $s$ of randomly generated problem instances.

\begin{figure}[ht]
    \centering
    \includegraphics[width=\textwidth]{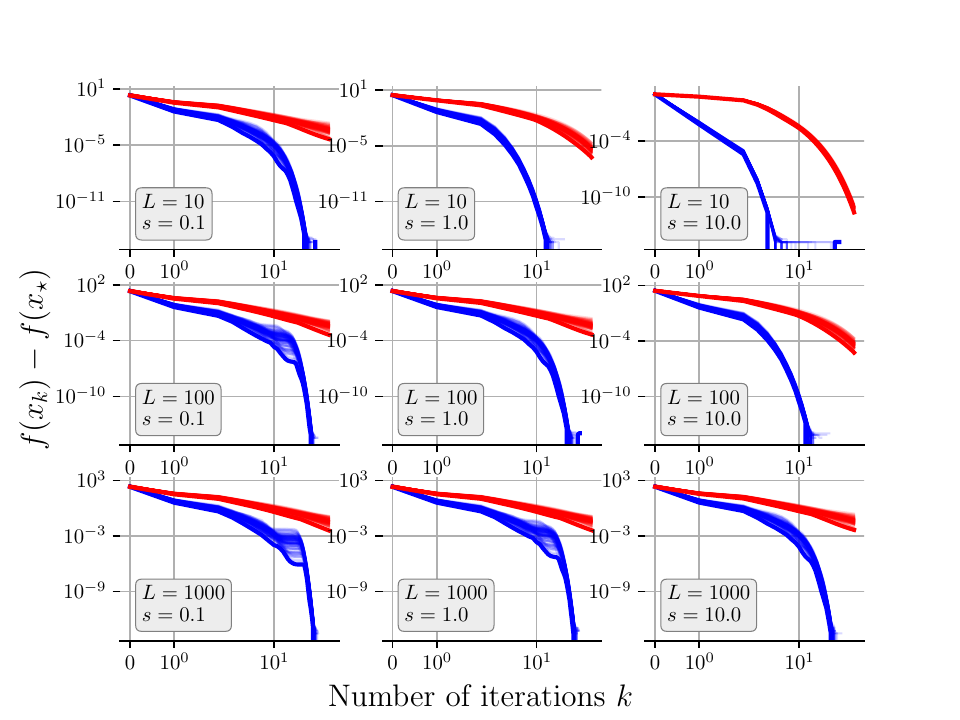}
    \caption{Grid of functional residuals resulting from running the gradient method using the step size defined in~\eqref{GradResBound2}, and a Krylov subspace solver on randomly generated problem instances. Different rows correspond to a fixed setting of $L$, and columns to a fixed setting of $s$. The transparent lines are individual problem instances, and the per-iteration lowest functional residual has been highlighted using an opaque line. Note that the initial functional residual is highly dependent on the setting of $L$, and that larger settings of $s$ lead to faster convergence for both methods.}
    \label{fig:functional_residual_grid}
\end{figure}

\subsection{Performance of One-Step Methods}

In Figure~\ref{fig:experiment_upper_bound} we measure the functional residual of the gradient method with the step size described in~\eqref{GradResBound2} and compare it with the upper bound described in Theorem~\ref{TheoremGMRate}. Our upper bound is indeed valid in this instance, but with a suboptimality gap that is not bounded by a constant. We observe the same for other parameter settings when generating the problem instances.
\begin{figure}[ht]
    \centering
    \includegraphics[width=0.43 \textwidth]{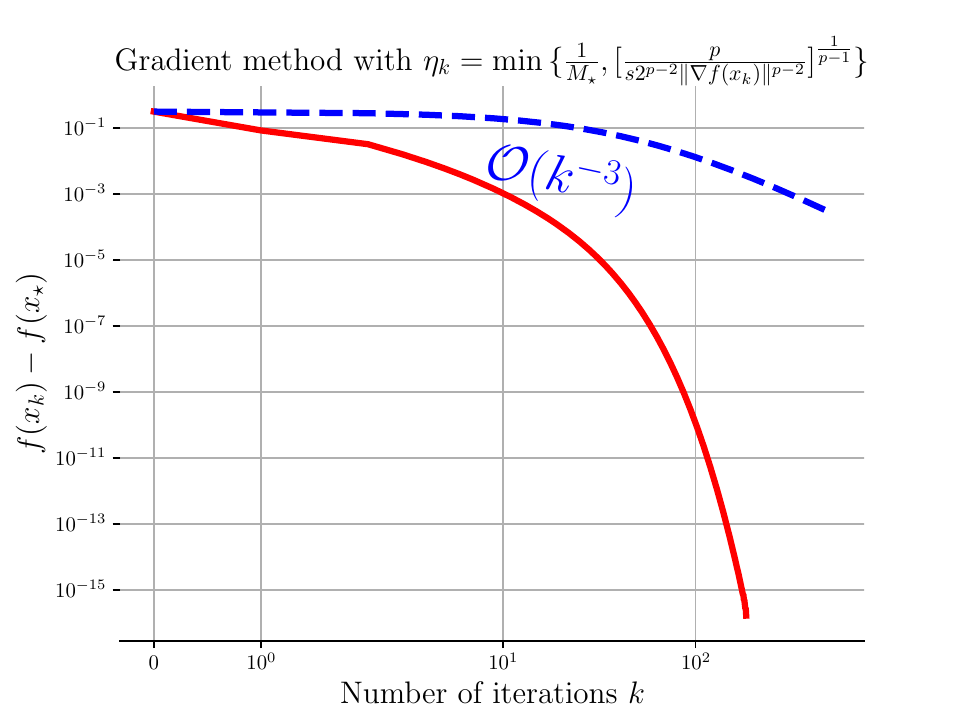}
    \caption{Functional residual of the gradient method with our step size. The problem instance has been randomly generated (according to Appendix~\ref{app:random_qps}), with $d = 1000$, $\mu = 0$, $L = $, $p = 3$, $s = 0.1$, and $\|\xopt\| = 1$. We see that in practice the method can perform better than the corresponding lower bound.
    }
    \label{fig:experiment_upper_bound}
\end{figure}

In Figure~\ref{fig:experiment_one_step} we compare the gradient method and the composite gradient method on our construction from Section~\ref{sec:one_step_lower_bounds}.
\begin{figure}[ht]
    \centering
    \includegraphics[width=0.43 \textwidth]{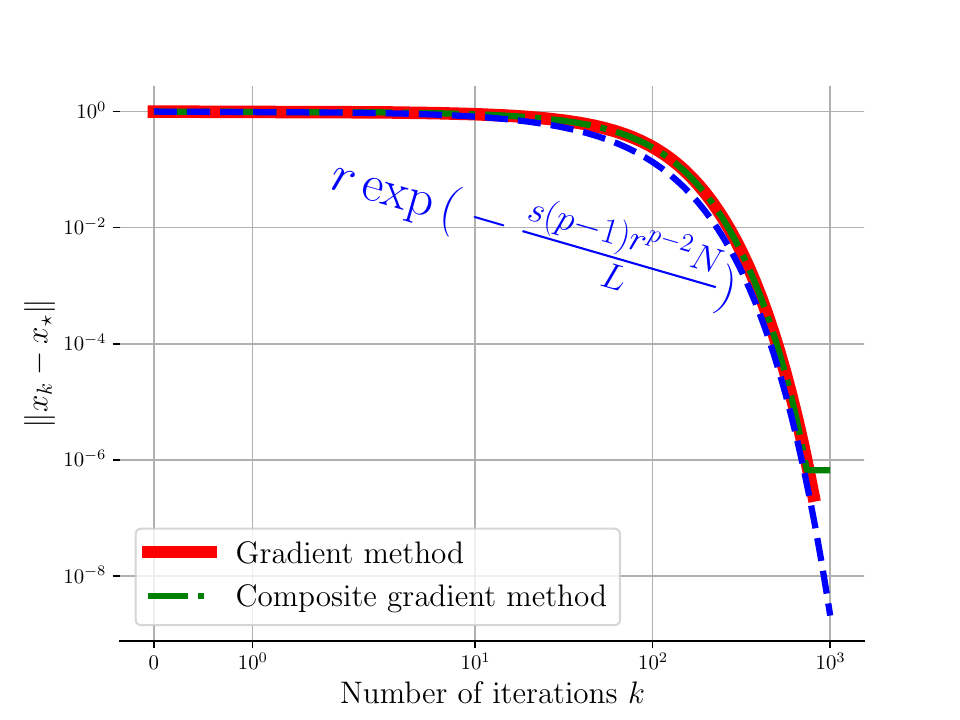}
    \caption{Performance of the gradient method and composite gradient method on our construction for one-step methods. We use the following step size for the gradient method, $\eta = \frac{1}{L+sr^{p-2}}$, and $\eta = \frac{1}{L}$ for the composite gradient method. The blue dashed line corresponds to our lower bound. The lower bound is below the lines of both methods as described by our theory, and the optimality gap increases with the number of iterations. We can also see that the two methods perform the same in this setting. We set the number of dimensions to 100, $\mu = 0$, $L = 100$, $p = 3$, $s = 1$, and $\|x_0 - \xopt\| = \|x_0\| = 1$.}
    \label{fig:experiment_one_step}
\end{figure}

\section{Experimental details}\label{app:exp_details}

This section describes a set of details pertinent to implementing our experiments. All source code used to generate the figures in the paper is open source and available on GitHub\footnote{\href{https://github.com/danielbergthomsen/regularized-quadratics}{https://github.com/danielbergthomsen/regularized-quadratics}}.
\subsection{Experiment implementation}\label{app:exp_imp}
In order to implement a Krylov subspace solver, we can solve for a vector $c \in \R^{2k+1}$ such that
\beq \label{eq:krylov_step}
c^* \in \argmin_{c \in \R^{2k+1}}  f(\mathcal{K}_{2k+1} c),
\eeq
where $\mathcal{K}_i$ is the Krylov matrix whose columns consist of $b, Ab, A^2 b, \ldots, A^{i-1} b$, and $k$ is the current iteration. The iteration chosen by the solver is then given by
$$
x_k = \mathcal{K}_{2k+1} c.
$$
The matrix $\mathcal{K}_i$ becomes increasingly ill-conditioned as more columns are added, and directly optimizing using this basis leads to inaccurate solutions. The matrix may be replaced by a matrix whose columns consist of the basis generated by the Lanczos process instead. 

To find minimizers for a given regularized quadratic, we use a Newton-type scheme that finds a root of the equation
\beq
h(r) = r - \|(A + sr^{p-2} I)^{-1}b\|,
\eeq
where $r$ acts as an approximation to the norm of the solution $||\xopt||$. This routine is performed at each step to compute~\eqref{eq:krylov_step}. Details about this can be found in~\cite{conn_trust_2000, cartis_adaptive_2011-1}.

\subsection{Implementation details for our construction} \label{app:exp_imp_constr}
Let us fix a given choice of smallest and largest eigenvalues $\mu$ and $L$ respectively, and the number of iterations $N$ we are running our first-order method for. We need to construct the matrix $A$ and $b$ in accordance with our theory in order for our theoretical framework to apply. The first thing to note is that since the methods we are using in our experiments always choose iterations in the $2N$-th Krylov subspace, we will not need to adversarially construct orthogonal transformations as in the proof of Lemma~\ref{lem:kryltraj}. We can therefore set $A = \Diag(\lambda)$, where we will need to adversarially construct the eigenvalues $\lambda_1 \leq \lambda_2 \leq \ldots \lambda_{2N}$. Furthermore, since $b = A \xopt$, we can construct $b$ by first constructing $\xopt$, which is given by~\eqref{Xopt}. In other words, to construct $b$ we will need to specify the vector $\pi \in \Delta_{2N+1}$ corresponding to Lemma~\ref{lem:min_max_sol}. From the proof of Lemma~\ref{lem:min_max_sol} (see~\cite{carmon_analysis_2018}), the eigenvalues of the matrix $\Ar$ should correspond to extrema of the Chebyshev polynomial
$$
T_{2N}\Big(\frac{\Qmod + 1 - 2x/\mur}{\Qmod - 1}\Big),
$$
where $\Lr = L + sr^{p-2}$, $\mur = \mu + sr^{p-2}$, and $\Qmod = \Lr / \mur$. The eigenvalues of $A$ are thus given by
$$
\ba{rcl}
\lambda_k & = & \frac{1}{2} \Bigg[\Lr + \mur - (\Lr - \mur)\cos\Big(\frac{k\pi}{2N-1}\Big)\Bigg].
\ea
$$
The vector $\pi$ we need to construct $b$ is the solution to the max-min problem
$$
\max_{\pi \in \Delta_{2N+1}} \min_{q \in \mathcal{P}_{2N}} \sum_{k=1}^{2N+1} \pi_k \big[1 - x_k q(x_k)\big]^2.
$$
Using linear algebra notation, we may reformulate this into an equivalent problem as
\beq \label{eq:linear_alg_reform}
 \max_{\pi \in \Delta_{2N+1}} \underbrace{\min_{c \in \R^{2N}} \frac{1}{2}\|\sqrt{\Pi} \big[\mathcal{V}c - \mathbf{1}\big]\|^2}_{g(\pi)},
\eeq
where $\Pi = \Diag(\pi_i)$, and $\mathcal{V}$ is given by
$$
\ba{rcl}
\mathcal{V} &=& \begin{pmatrix}
            \lambda_1 & \lambda_1^2 & \cdots & \lambda_1^{2N} \\
            \lambda_2 & \lambda_2^2 & \cdots & \lambda_2^{d-1} \\
            \vdots & \vdots & \ddots & \vdots \\
            \lambda_{2N+1} & \lambda_{2N+1}^2 & \cdots & \lambda_{2N+1}^{2N}
        \end{pmatrix}.
\ea
$$
For a fixed vector $\pi$ of all non-zero coefficients, this corresponds to an instance of a least squares estimation problem with a 1-rank deficiency. As a consequence, the optimal vector $c \in \R^{2N}$ is given by
\beq \label{eq:c_opt}
\ba{rcl}
c^* &=& (\mathcal{V}^\top \Pi \mathcal{V})^{-1} \mathcal{V}^\top \pi.
\ea
\eeq
Plugging this back into~\eqref{eq:linear_alg_reform}, we end up with
\beq \label{eq:linear_alg_reform2}
\ba{rcl}
g(\pi) &=& \min_{c \in \R^{2N}} \frac{1}{2}\|\sqrt{\Pi}\big[\mathcal{V}c - \mathbf{1}\big] \|^2 \\
\\
&\equiv & \min_{q \in \R^{2N}} \frac{1}{2} (\mathcal{V}c)^\top \Pi \mathcal{V}c - \mathbf{1}^\top \Pi \mathcal{V}c \\
\\
&\stackrel{\eqref{eq:c_opt}}{=}& -\frac{1}{2} \pi^\top \mathcal{V} (\mathcal{V}^\top \Pi \mathcal{V})^{-1} \mathcal{V}^\top \pi +  \frac{1}{2}
\ea
\eeq
The problem is therefore equivalent to the convex optimization problem
\beq \label{eq:prob_cvx}
\ba{rcl}
\min_{A \succ 0, y} &\frac{1}{2} y^\top A^{-1} y \\
\\
\text{subject to: } &y = \mathcal{V}^\top \pi \\
 & A = \mathcal{V}^\top \Pi \mathcal{V}, \\
 & \pi \in \Delta_{2N+1},
\ea
\eeq
and solutions can be computed using a convex optimization solver.

The problem with the above-mentioned approach for computing $\pi$ is that~\eqref{eq:prob_cvx} is ill-conditioned, and directly computing high-dimensional solutions to sufficient accuracy is not feasible. In order to get around this limitation, we generate the solutions to problems of low dimensionality, fit a model to approximate $\pi$, and extrapolate to higher dimensions. See Figure~\ref{fig:optimal_pi} for examples of approximate solutions generated by a convex optimization solver.

\begin{figure}[ht]
    \centering
    \subfigure[]{\includegraphics[width=0.325\textwidth]{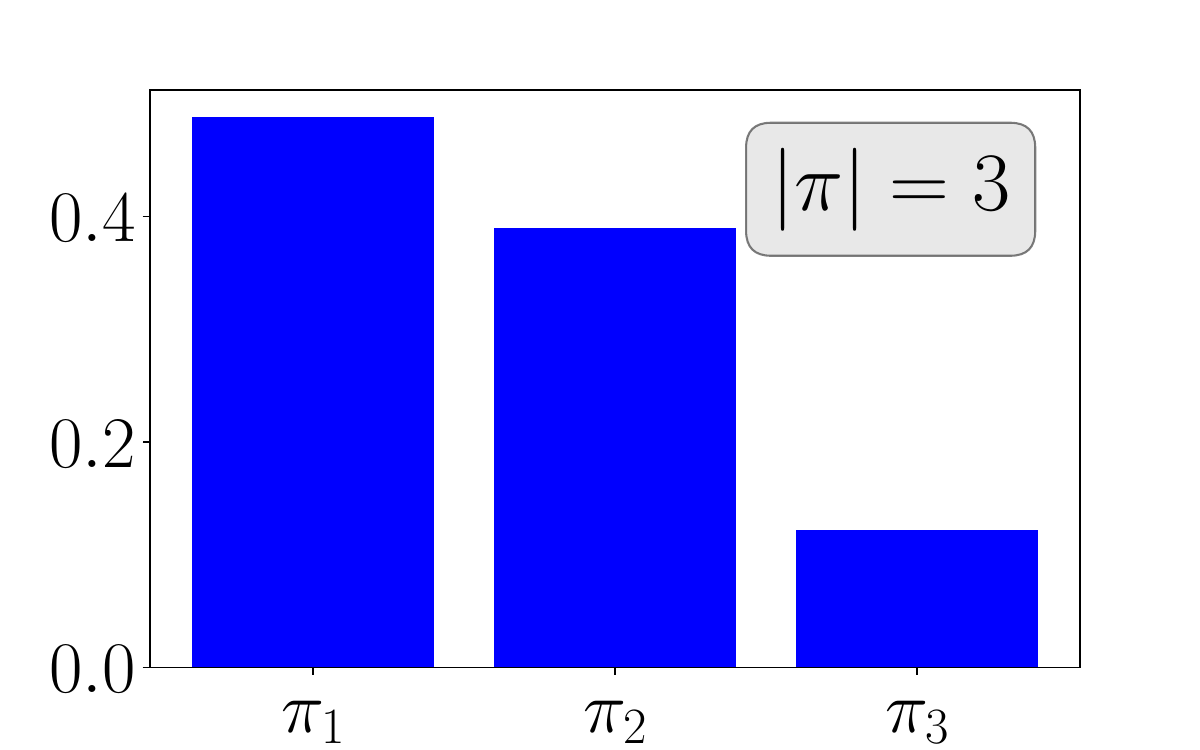}\label{fig:optimal_pi_3}}
    \subfigure[]{\includegraphics[width=0.325\textwidth]{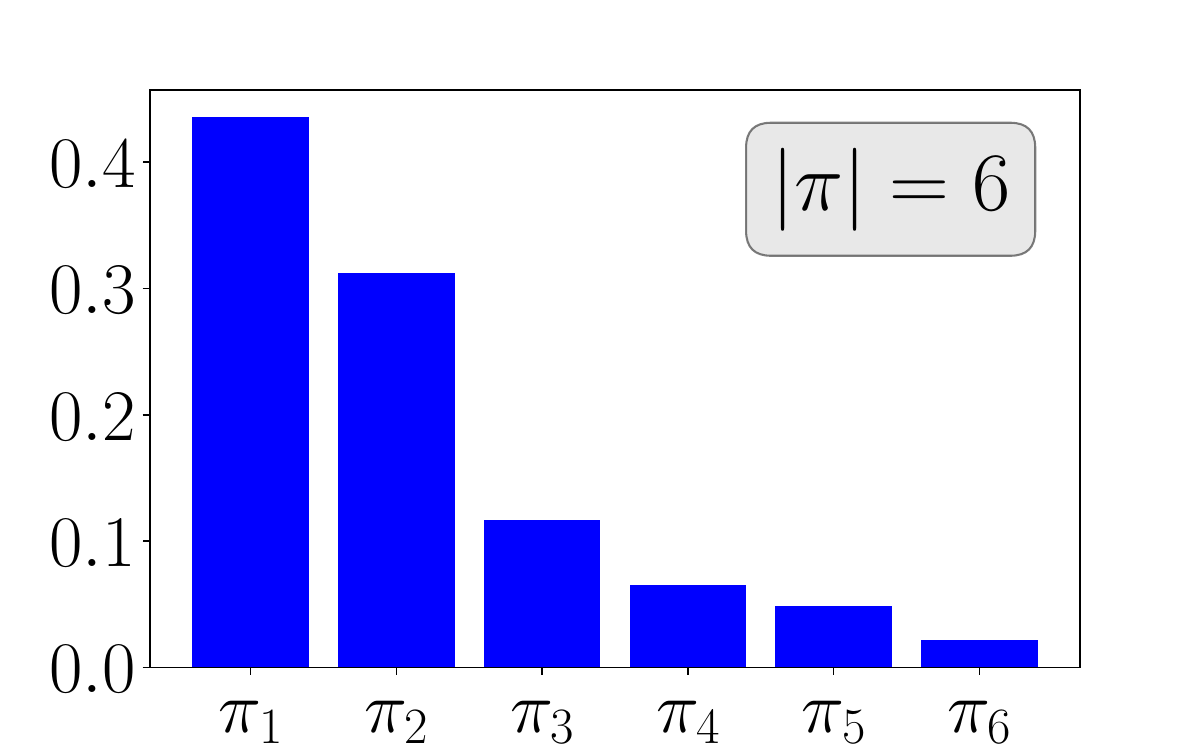}\label{fig:optimal_pi_6}}
    \subfigure[]{\includegraphics[width=0.325\textwidth]{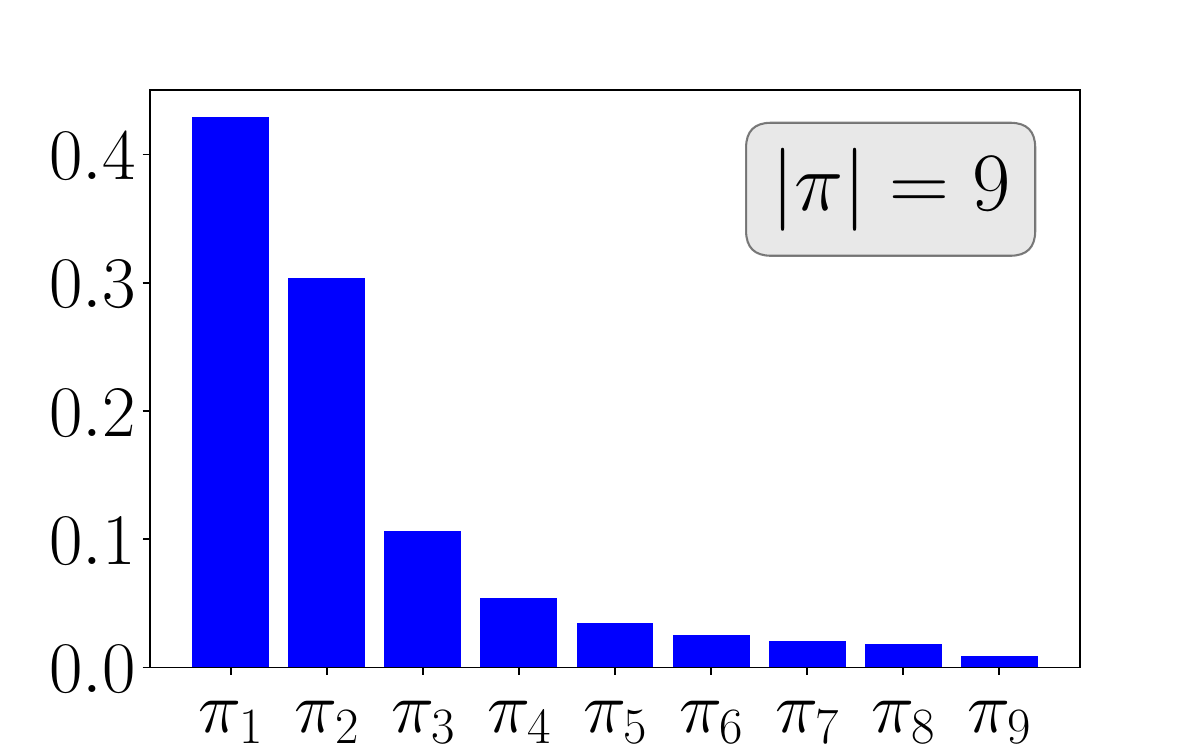}\label{fig:optimal_pi_9}}
    \caption{A few examples of the solution to the problem in~\eqref{eq:prob_cvx} found using the python package \texttt{cvxpy}~\cite{diamond2016cvxpy, agrawal2018rewriting} for different dimensions. The coefficients appear to decay exponentially, but inspecting~\subref{fig:optimal_pi_9} one can see that the first and last coefficients deviate slightly from the rate of decay of the rest of the coefficients.}
    \label{fig:optimal_pi}
\end{figure}

The approach we take to approximate $\pi$ is by fitting separate models to the first and last component as a function of the total number of components in $\pi$, and a single, separate model for all the components in between. The models used for the first and last components are the following:
$$
\hat{\pi_i} = c_1^{-c_2(i - c_3)} + c_4,
$$
where $i \in \{1, 2N+1\}$, and $c_1, c_2, c_3, c_4$ are fitted according to the best fit. If we denote by $u$ the vector containing the indices of the coefficients between the first and the last, then the model used to find the best estimate for these is given by
$$
\hat{\pi_i} = \Big[c_2 (u - c_1)\Big]^{-c_3}.
$$
The aggregated model approximates $\pi$ reasonably well in lower dimensions, and an example of its fit in comparison to the solution found by a convex optimization solver is found in Figure~\ref{fig:pi_hypothesis}.

\begin{figure}[ht]
    \centering
    \includegraphics[width=0.50\textwidth]{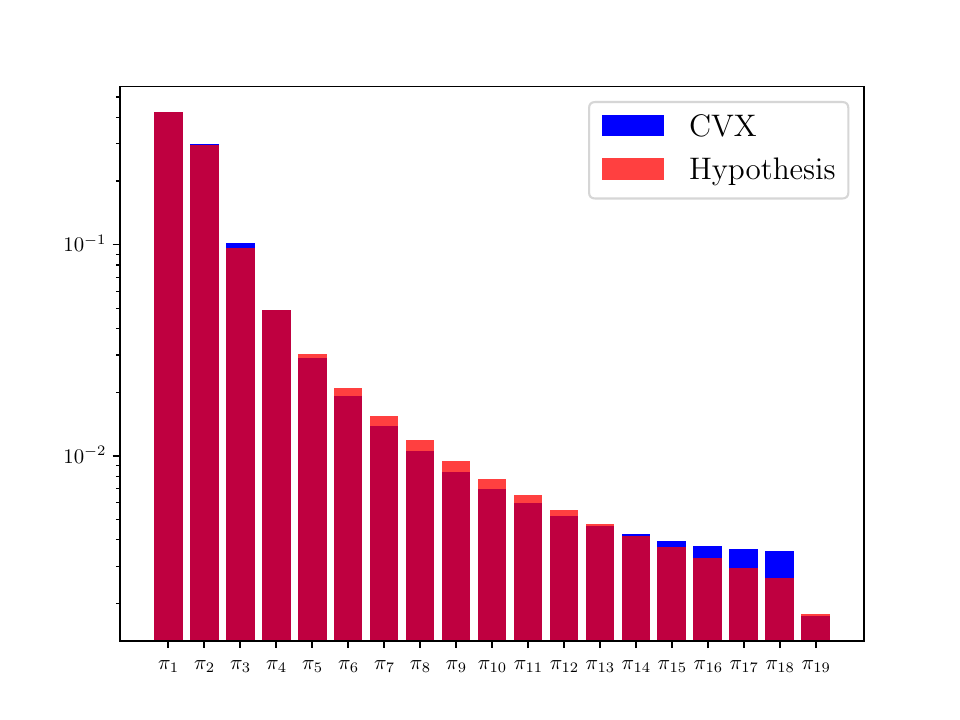}
    \caption{Histogram visualization of the coefficients generated by our hypothesis function, compared to the ground truth for $|\pi| = 19$.}
    \label{fig:pi_hypothesis}
\end{figure}

\subsection{Implementation details for randomly generated problem instances} \label{app:random_qps}
In order to generate random instances of~\eqref{eq:prob} while still being able to control dimension $d$ and the parameters $\mu, L, s, p$ and $\|\xopt\|$, we undertake the following procedure:
\begin{enumerate}
    \item Begin generating the eigenspectrum $\lambda$ by evenly dividing the interval $[\mu, L]$ into $d$ different points, with $\mu$ and $L$ being the smallest and largest eigenvalues respectively. Let $\Lambda = \Diag(\lambda_1, \ldots, \lambda_d)$.
    \item Generate $A$ by first sampling an orthogonal matrix $U$ according to the Haar distribution, which is a uniform distribution on the orthogonal group. Then set $A = U^\top \Lambda U$.
    \item Now generate $\xopt$ by first sampling a vector following a multivariate standard Gaussian distribution: $v \sim \mathcal{N}(0, 1)$. This vector will be uniformly distributed on some ball of an unknown radius. We can then set $\xopt = r \cdot \frac{v}{\|v\|}$ to control the radius of this ball.
    \item $b$ is now given by a closed formula as $b = \Ar \xopt$.
\end{enumerate}

\end{document}